\documentclass{amsart}
\usepackage{hyperref}

\usepackage{comment}
\usepackage{amsfonts, fancyhdr, qtree, amsmath, tipa, amssymb, hyperref, url, amsthm, subfigure, xy, tikz-cd, verbatim, geometry}

\usepackage[color=cyan!40]{todonotes}



\theoremstyle{definition}
\newtheorem{nul}{}[section]
\newtheorem{dfn}[nul]{Definition}

\newtheorem{rmk}[nul]{Remark}
\newtheorem{cnstr}[nul]{Construction}

\newtheorem{exm}[nul]{Example}

\newtheorem*{dfn*}{Definition}
\newtheorem*{axm*}{Axiom}
\newtheorem*{ntn*}{Notation}
\newtheorem*{exm*}{Example}
\newtheorem*{exr*}{Exercise}
\newtheorem*{int*}{Intuition}
\newtheorem*{qst*}{Question}

\theoremstyle{plain}

\newtheorem{thm}[nul]{Theorem}
\newtheorem{prop}[nul]{Proposition}
\newtheorem{lem}[nul]{Lemma}
\newtheorem{cor}[nul]{Corollary}

\newtheorem*{thm*}{Theorem}
\newtheorem*{prop*}{Proposition}
\newtheorem*{cor*}{Corollary}
\newtheorem*{lem*}{Lemma}
\newtheorem*{cnj*}{Conjecture}


\DeclareMathOperator{\Hom}{\text{Hom}}

\DeclareMathOperator{\smsh}{\wedge}

\DeclareMathOperator{\Ext}{\text{Ext}}

\DeclareMathOperator{\HFPSS}{\text{HFPSS}}

\DeclareMathOperator{\MaySS}{\text{MaySS}}
\newcommand{\F}{\mathbb{F}_2}

\newcommand{\HZ}{H\underline{\mathbb{Z}}}
\newcommand{\as}{a_\sigma}

\newcommand{\EMaySS}{C_2\text{-MaySS}}


\usepackage{tikz}
\usetikzlibrary{matrix,arrows,decorations}
\usepackage{tikz-cd}


\hyphenation{Mack-ey mon-oid-al Wald-hau-sen}


\title{Real Orientations of Lubin--Tate spectra}

\author{Jeremy Hahn}
\email{jhahn01@mit.edu}
\address{Department of Mathematics, Massachusetts Institute of Technology, Cambridge, MA 02139}

\author{XiaoLin Danny Shi}
\email{dannyshi@math.uchicago.edu}
\address{Department of Mathematics, University of Chicago, 5734 S University Ave, Chicago, IL 60637}


\begin{document}

\begin{abstract}
We show that Lubin--Tate spectra at the prime $2$ are Real oriented and Real Landweber exact.  The proof is by application of the Goerss--Hopkins--Miller theorem to algebras with involution.  For each height $n$, we compute the entire homotopy fixed point spectral sequence for $E_n$ with its $C_2$-action given by the formal inverse.  We study, as the height varies, the Hurewicz images of the stable homotopy groups of spheres in the homotopy of these $C_2$-fixed points.
\end{abstract}


\setcounter{tocdepth}{1}
\maketitle

\tableofcontents

\section{Introduction}

The main results of this paper are the following: 

\begin{thm} \label{FullVersionONE}
Let $k$ be a perfect field of characteristic $2$, $\Gamma$ a height $n$ formal group law over $k$, and $E_{(k, \Gamma)}$ the corresponding Lubin--Tate theory.  Suppose $G$ is a finite subgroup of the Morava stabilizer group that contains the central subgroup $C_2$.  Then there is a $G$-equivariant map
$$N^{G}_{C_2} MU_\mathbb{R} \longrightarrow E_{(k, \Gamma)},$$
where $N_{C_2}^G(-)$ is the Hill--Hopkins--Ravenel norm functor. 
\end{thm}

Theorem~\ref{FullVersionONE} establishes the first known connection between the obstruction-theoretic actions on Lubin--Tate theories and the geometry of complex conjugation.  In particular, when $G = C_2$, Theorem~\ref{FullVersionONE} implies that for all height $n \geq 1$, the classical complex orientation $MU \to E_n$ can be refined to a Real orientation 
$$MU_\mathbb{R} \longrightarrow E_n.$$

This presence of geometry has tremendous computational consequences.  Using Theorem~\ref{FullVersionONE}, we obtain the first calculations for $E_n^{hC_2}$, valid for arbitrarily large heights $n$:

\begin{thm} \label{FullVersionTWO}
The $E_2$-page of the $RO(C_2)$-graded homotopy fixed point spectral sequence of $E_n$ is 
$$E_2^{s, t} (E_n^{hC_2}) = W(\mathbb{F}_{2^n})[[\bar{u}_1, \bar{u}_2, \ldots, \bar{u}_{n-1}]][\bar{u}^\pm] \otimes \mathbb{Z}[u_{2\sigma}^\pm, a_\sigma]/(2a_\sigma).$$
The classes $\bar{u}_1$, $\ldots$, $\bar{u}_{n-1}$, $\bar{u}^\pm$, and $a_\sigma$ are permanent cycles.  All the differentials in the spectral sequence are determined by the differentials 
\begin{eqnarray*}
d_{2^{k+1} -1} (u_{2\sigma}^{2^{k-1}}) &=&  \bar{u}_k\bar{u}^{2^k-1}a_\sigma^{2^{k+1}-1}, \, \, \, 1 \leq k \leq n-1, \\ 
d_{2^{n+1}-1}(u_{2\sigma}^{2^{n-1}})&=& \bar{u}^{2^n-1}a_\sigma^{2^{n+1}-1}, \, \, \, k = n, \
\end{eqnarray*}
and multiplicative structures. 
\end{thm}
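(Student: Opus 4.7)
The plan is to exploit the Real orientation $MU_\mathbb{R} \to E_n$ from Theorem~\ref{FullVersionONE} (specialized to $G = C_2$), together with Real Landweber exactness, to import both the $RO(C_2)$-graded equivariant homotopy and the Hill--Hopkins--Ravenel slice differentials from Real cobordism down to $E_n$.

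For the $E_2$-page, I would apply Real Landweber exactness. The underlying homotopy $\pi_* E_n \cong W(\mathbb{F}_{2^n})[[u_1, \ldots, u_{n-1}]][u^{\pm}]$ carries a $C_2$-action induced from the formal inverse, and the Real orientation lets one rewrite the classical generators in terms of $C_2$-invariant classes $\bar{u}_i$ and $\bar{u}$ multiplied by powers of the orientation class $u_{2\sigma}$. The $RO(C_2)$-graded cohomology of $C_2$ with these coefficients then gives the stated formula, with the factor $\mathbb{Z}[u_{2\sigma}^{\pm}, a_\sigma]/(2 a_\sigma)$ coming from the $RO(C_2)$-graded $\pi_\star \HZ^{hC_2}$ (the periodicity of $E_n$ forcing $u_{2\sigma}$ to be invertible on the $E_2$-page).

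I would then verify that $\bar{u}_1, \ldots, \bar{u}_{n-1}$, $\bar{u}^{\pm}$, and $a_\sigma$ are permanent cycles: $a_\sigma$ descends from the equivariant sphere, and each of the other classes is the image of a genuinely $C_2$-equivariant class in $\pi_\star^{C_2} MU_\mathbb{R}$ under the Real orientation, hence cannot support a differential. The differentials on $u_{2\sigma}^{2^{k-1}}$ are then deduced by comparison with the slice (equivalently, HFPSS) differentials for Real-oriented spectra such as $BP_\mathbb{R}$ or the truncations $BP_\mathbb{R}\langle n \rangle$, in which HHR established slice differentials of the form $d_{2^{k+1}-1}(u_{2\sigma}^{2^{k-1}}) = \bar{v}_k \bar{u}^{2^k-1} a_\sigma^{2^{k+1}-1}$ up to a unit. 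Under $MU_\mathbb{R} \to E_n$ the image of $\bar{v}_k$ is $\bar{u}_k$ (times a unit) for $1 \leq k \leq n-1$, while for $k = n$ it is a unit (since $v_n$ is invertible in $E_n$), which explains both the form of the top differential and the truncation of the tower at $k = n$.

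The main obstacle is establishing these differentials rigorously and arguing that they are complete. Non-triviality of each $d_{2^{k+1}-1}(u_{2\sigma}^{2^{k-1}})$ in $E_n$ must be verified by tracing the corresponding HHR differential through the Real orientation via comparison with an intermediate Real-oriented spectrum, such as $BP_\mathbb{R}\langle n \rangle$, in which the slice differential is known to be non-zero. Completeness---that the stated differentials, together with multiplicativity, account for every differential---requires a sparseness or dimension-count argument, matching the predicted $E_\infty$-page against an independent computation of $\pi_\star E_n^{hC_2}$. The $k = n$ case is the most delicate: the absence of a $\bar{u}_n$ factor in the top differential, together with the collapse of the spectral sequence at $E_{2^{n+1}}$, must be argued by carefully tracking the multiplicative absorption of $v_n$ as a unit.
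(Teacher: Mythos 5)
Your overall strategy matches the paper's: compare the $C_2$-homotopy fixed point spectral sequence of $E_n$ with that of $BP_\mathbb{R}$ (equivalently $MU_\mathbb{R}$) via the Real orientation, pushing the Hu--Kriz differentials forward.  However, there is a genuine gap in your permanent-cycle argument.  The Real orientation $MU_\mathbb{R} \to E_n$ on $E_2$-pages sends $\bar{v}_i$ to $\bar{u}_i \bar{u}^{2^i-1}$ for $1 \le i \le n-1$ and to $\bar{u}^{2^n-1}$ for $i = n$.  Thus it is the \emph{products} $\bar{u}_i \bar{u}^{2^i-1}$ and the power $\bar{u}^{2^n-1}$ that are images of genuine $C_2$-equivariant classes from $MU_\mathbb{R}$, not $\bar{u}$ or the $\bar{u}_i$ individually.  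Your claim that ``each of the other classes is the image of a genuinely $C_2$-equivariant class in $\pi_\star^{C_2} MU_\mathbb{R}$'' is therefore false for $\bar{u}$ and for $\bar{u}_1, \ldots, \bar{u}_{n-1}$, and does not by itself show these are permanent cycles.  The paper fills this gap with a separate norm argument: applying the Hill--Hopkins--Ravenel norm $N_e^{C_2}$ to the underlying nonequivariant unit $u \colon S^2 \to i_e^* E_n$ produces an actual class $\bar{u}^2 \in \pi_{2\rho}^{C_2} E_n$, hence a permanent cycle; combined with $\bar{u}^{2^n-1}$ (odd exponent) being a permanent cycle, this forces $\bar{u}$ itself to be one, and then the $\bar{u}_i$ follow.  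You need some such argument---without it the proposal is incomplete.

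A smaller issue is your completeness argument.  You appeal to ``matching the predicted $E_\infty$-page against an independent computation of $\pi_\star E_n^{hC_2}$,'' but no such independent computation is available here (this theorem \emph{is} that computation).  The paper instead runs an induction on $k$, at each stage producing the $d_{2^{k+1}-1}$-differentials by pushing forward from $BP_\mathbb{R}$, extending by multiplicativity with the already-established permanent cycles, and then using a degree/sparseness argument internal to the $E_{2^{k+1}-1}$-page to show no other differentials of that length exist before passing to the next page.  You would need to supply that sparseness analysis explicitly rather than gesturing at an external comparison.  Finally, the differential you quote, $d_{2^{k+1}-1}(u_{2\sigma}^{2^{k-1}}) = \bar{v}_k \bar{u}^{2^k-1} a_\sigma^{2^{k+1}-1}$, conflates the $BP_\mathbb{R}$ form (which has no $\bar{u}$; the Hu--Kriz differential is $d_{2^{k+1}-1}(u_{2\sigma}^{2^{k-1}}) = \bar{v}_k a_\sigma^{2^{k+1}-1}$) with its image in $E_n$---worth being careful about since the $\bar{u}$-power arises only after substituting $\bar{v}_k \mapsto \bar{u}_k \bar{u}^{2^k-1}$.
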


The existence of equivariant orientations renders computations that rely on the slice spectral sequence tractable.  This observation was first made in the solution of the Kervaire invariant problem by Hill, Hopkins, and Ravenel \cite{HHR}.  More recently, Hill, Wang, Xu, and the second author used Theorem~\ref{FullVersionONE} to compute completely the slice spectral sequence of a $C_4$-equivariant height-4 Lubin--Tate theory \cite{HillShiWangXu}.

\subsection{Motivation and Main Results}

Topological $K$-theory is a remarkably useful cohomology theory that has produced important homotopy-theoretic invariants in topology.  Many deep facts in topology have surprisingly simple proofs using topological $K$-theory.  For instance, Adams's original solution \cite{AdamsHopfInvariant} to the Hopf invariant one problem used ordinary cohomology and secondary cohomology operations, but, together with Atiyah \cite{AdamsAtiyahHopfInvariant}, he later discovered a much simpler solution using complex $K$-theory and its Adams operations.  He also studied the real $K$-theory of real projective spaces \cite{AdamsVectorFields} and used it to resolve the vector fields on spheres problem.

In 1966, Atiyah \cite{AtiyahKR} formalized the connection between complex $K$-theory, $KU$, and real $K$-theory, $KO$.  The complex conjugation action on complex vector bundles induces a natural $C_2$-action on $KU$.  Under this action, the $C_2$-fixed points and the homotopy fixed points of $KU$ are both $KO$: 
$$KU^{C_2} \simeq KU^{hC_2} \simeq KO.$$
Furthermore, there is a homotopy fixed point spectral sequence computing the homotopy group of $KO$, starting from the action of $C_2$ on the homotopy group of $KU$: 
$$E_2^{s,t} = H^s(C_2; \pi_t KU) \Longrightarrow \pi_{t-s} KO.$$
The spectrum $KU$, equipped with this $C_2$-action and considered as a $C_2$-spectrum, is called Atiyah's Real $K$-theory $K_\mathbb{R}$.  

The spectrum $KU$ is a complex oriented cohomology theory, which means that there is a map $MU \longrightarrow KU$, where $MU$ is the complex cobordism spectrum.  Early work on $MU$ due to Milnor \cite{Milnor60}, Novikov \cite{Novikov60, Novikov62, Novikov67}, and Quillen \cite{Quillen69} established the complex cobordism spectrum as a critical tool in modern stable homotopy theory, with deep connections to algebraic geometry and number theory through the theory of formal groups \cite{COCTALOS, RavenelGreen, LurieChromatic, PetersonChromatic}.  The complex orientation of $KU$ induces a map of rings 
$$\pi_* MU \longrightarrow \pi_* KU$$
on the level of homotopy groups.  Quillen's \cite{Quillen69} calculation of $\pi_*MU$ shows that the map above produces a one dimensional formal group law over $\pi_*KU$, which turns out to be the multiplicative formal group law $\Gamma_m(x,y) = x + y - xy$.  

Analogously as in the case of $KU$, the complex conjugation action on complex manifolds induces a natural $C_2$-action on $MU$.  This action produces the Real cobordism spectrum $MU_\mathbb{R}$ of Landweber \cite{LandweberMUR}, Fujii \cite{FujiiMUR}, and Araki \cite{Araki}.  The underlying spectrum of $MU_\mathbb{R}$ is $MU$, with the $C_2$-action given by complex conjugation.  

Complex conjugation acts on $KU$ and $MU$ by coherently commutative ($\mathbb{E}_\infty$) maps, making $K_\mathbb{R}$ and $MU_\mathbb{R}$ commutative $C_2$-spectra.  The complex orientation of $KU$ is compatible with the complex conjugation action, and it can be refined to a \textit{Real orientation} 
$$MU_\mathbb{R} \longrightarrow K_\mathbb{R}.$$ 
\vspace{0.01in}

Complex $K$-theory belongs to a more general class of spectra --- the Lubin--Tate spectra --- central to the study of chromatic homotopy theory and the stable homotopy groups of spheres.  These spectra are reverse-engineered from algebra as follows.  Given a formal group law $\Gamma$ of finite height $n$ over a perfect field $k$ of characteristic $p$, Lubin and Tate \cite{LubinTate} showed that $\Gamma$ admits a universal deformation defined over a complete local ring $R$ with residue field $k$.  The ring $R$ is non-canonically isomorphic to $W(k)[[u_1, u_2, \ldots, u_{n-1}]]$, over which the formal group law is characterized by a map 
$$MU_* \longrightarrow W(k)[[u_1, u_2, \ldots, u_{n-1}]][u^\pm].$$  

This map can be shown to be Landweber exact.  Applying the Landweber exact functor theorem yields a complex oriented homology theory represented by a homotopy commutative ring spectrum $E_{(k,\Gamma)}$.  When $k = \mathbb{F}_{p^n}$ and $\Gamma$ is the Honda formal group law, the resulting Lubin--Tate spectrum is commonly called $E_n$, the height $n$ Morava $E$-theory.  Since the height $1$ Morava $E$-theory is $KU^\wedge_p$, the Lubin--Tate spectra can be thought of as the higher height analogues of $K$-theory.  

To endow the Lubin--Tate theories $E_{(k, \Gamma)}$ with coherent multiplicative structures, Goerss, Hopkins, and Miller computed the moduli space of $\mathbb{A}_\infty$- and $\mathbb{E}_\infty$-structures on $E_{(k,\Gamma)}$.  The group $\mathbb{G}_n$ of automorphisms of the formal group law $\Gamma$ naturally acts on $\pi_*E_{(k,\Gamma)}$, and the Goerss--Hopkins--Miller computation demonstrates that there is in fact an action of $\mathbb{G}_n$ on $E_{(k,\Gamma)}$ by $\mathbb{E}_\infty$-ring automorphisms. 

For any closed subgroup $G \subseteq \mathbb{G}_n$, one can use the Goerss--Hopkins--Miller action to construct a homotopy fixed point spectrum $E_{(k,\Gamma)}^{hG}:=F(EG_+, E_{(k,\Gamma)})^{G}$.  There are homotopy fixed point spectral sequences of the form 
$$E_2^{s, t} = H^s(G; \pi_t(E_{(k,\Gamma)})) \Longrightarrow \pi_{t-s}(E_{(k,\Gamma)}^{hG}).$$

The spectra $E_n^{hG}$ turn out to be the essential building blocks of the $p$-local stable homotopy category.  In particular, the homotopy groups $\pi_* E_n^{hG}$ assemble to the stable homotopy groups of spheres.  To be more precise, the chromatic convergence theorem of Hopkins and Ravenel~\cite{RavenelOrangeBook} exhibits the $p$-local sphere spectrum $\mathbb{S}_{(p)}$ as the inverse limit of the chromatic tower 
$$\cdots \longrightarrow L_{E_n} \mathbb{S} \longrightarrow L_{E_{n-1}} \mathbb{S} \longrightarrow \cdots \longrightarrow L_{E_0} \mathbb{S},$$
where each $L_{E_n}\mathbb{S}$ is assembled via the chromatic fracture square
$$\begin{tikzcd}
L_{E_n}\mathbb{S} \ar[d] \ar[r] & L_{K(n)}\mathbb{S} \ar[d] \\ 
L_{E_{n-1}} \mathbb{S} \ar[r] & L_{E_{n-1}}L_{K(n)} \mathbb{S},
\end{tikzcd}$$
where $K(n)$ is the $n$th Morava $K$-theory.

Devinatz and Hopkins~\cite{DevinatzHopkins} proved that $L_{K(n)}\mathbb{S} \simeq E_n^{h\mathbb{G}_n}$, and, furthermore, that the Adams--Novikov spectral sequence computing $L_{K(n)}\mathbb{S}$ can be identified with the associated homotopy fixed point spectral sequence for $E_n^{h\mathbb{G}_n}$.  The fixed point spectra $\{E_n^{hG}\, |\, G \subset \mathbb{G}_n \}$, where $G$ ranges over finite subgroups of $\mathbb{G}_n$, play a central role in resolving $E_n^{h\mathbb{G}_n}$ (see \cite{RavenelGreen, GoerssHennMahowaldRezk, Henn2007, BobkovaGoerss, BeaudryGoerssHenn}).  They are also important in modern detection theorems, which are results about families in the stable homotopy groups of spheres obtained by studying the Hurewicz homomorphism from the sphere spectrum to these periodic theories (\cite{RavenelOdd, HHR}).

For the rest of the paper, we designate $p = 2$.  When $k = \mathbb{F}_2$, and $\Gamma$ is the multiplicative formal group $\Gamma_m(x, y) = x + y - xy$, we find that $E_{(\mathbb{F}_2, \Gamma_m)}^{hC_2} = KO_2^\wedge$, the 2-adic completion of real $K$-theory.  For this reason, the spectra $E_n^{hG}$ are commonly called the \textit{higher real $K$-theories}.  


At height 2, these homotopy fixed points are known as TMF and TMF with level structures.  Computations of the homotopy groups of these spectra are done by Hopkins--Mahowald \cite{HopkinsMahowald}, Bauer \cite{BauerTMF}, Mahowald--Rezk \cite{MahowaldRezk}, Behrens--Ormsby \cite{BehrensOrmsby}, Hill--Hopkins--Ravenel \cite{HHRKH}, and Hill--Meier \cite{HillMeier}.

\vspace{0.2in}
For higher heights $n >2$, the homotopy fixed points $E_n^{hG}$ are notoriously difficult to compute.  Prior to the present work, essentially no progress had been made.  One of the chief reasons that these homotopy fixed points are so difficult to compute is because the group actions are constructed purely from obstruction theory.  This stands in contrast to the cases of Atiyah's Real $K$-theory $K_\mathbb{R}$ and Real cobordism $MU_\mathbb{R}$, whose actions come from geometry.  The main theorem of this work establishes the first known connection between the obstruction-theoretic actions on Lubin--Tate theories and the geometry of complex conjugation:

\begin{thm} \label{FullVersion}
Let $k$ be a perfect field of characteristic $2$, $\Gamma$ a height $n$ formal group law over $k$, and $E_{(k, \Gamma)}$ the corresponding Lubin--Tate theory.  Suppose $G$ is a finite subgroup of the Morava stabilizer group that contains the central subgroup $C_2$.  Then there is a $G$-equivariant map
$$N^{G}_{C_2} MU_\mathbb{R} \longrightarrow E_{(k, \Gamma)},$$
where $N_{C_2}^G(-)$ is the Hill--Hopkins--Ravenel norm functor. 
\end{thm}

In particular, when $G = C_2$, Theorem~\ref{FullVersion} implies that for all height $n \geq 1$, the classical complex orientation $MU \to E_n$ can be refined to a Real orientation 
$$MU_\mathbb{R} \longrightarrow E_n.$$

The presence of geometry, aside from its intrinsic interest, has tremendous computational consequences.  Hu and Kriz \cite{HuKriz} were able to completely compute the homotopy fixed point spectral sequence for $MU_\mathbb{R}$.  Combining our main theorem with the Hu--Kriz computation, we obtain the first calculations for $E_n^{hC_2}$, valid for arbitrarily large heights $n$. 

\begin{thm} \label{FullVersion2}
The $E_2$-page of the $RO(C_2)$-graded homotopy fixed point spectral sequence of $E_n$ is 
$$E_2^{s, t} (E_n^{hC_2}) = W(\mathbb{F}_{2^n})[[\bar{u}_1, \bar{u}_2, \ldots, \bar{u}_{n-1}]][\bar{u}^\pm] \otimes \mathbb{Z}[u_{2\sigma}^\pm, a_\sigma]/(2a_\sigma).$$
The classes $\bar{u}_1$, $\ldots$, $\bar{u}_{n-1}$, $\bar{u}^\pm$, and $a_\sigma$ are permanent cycles.  All the differentials in the spectral sequence are determined by the differentials 
\begin{eqnarray*}
d_{2^{k+1} -1} (u_{2\sigma}^{2^{k-1}}) &=&  \bar{u}_k\bar{u}^{2^k-1}a_\sigma^{2^{k+1}-1}, \, \, \, 1 \leq k \leq n-1, \\ 
d_{2^{n+1}-1}(u_{2\sigma}^{2^{n-1}})&=& \bar{u}^{2^n-1}a_\sigma^{2^{n+1}-1}, \, \, \, k = n, \
\end{eqnarray*}
and multiplicative structures. 
\end{thm}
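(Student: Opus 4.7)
The plan is to combine Theorem~\ref{FullVersion}, which supplies a $C_2$-equivariant Real orientation $MU_\mathbb{R} \to E_n$, with Hu--Kriz's complete computation of the $MU_\mathbb{R}$ homotopy fixed point spectral sequence, and to transport the information to $E_n$ via Real Landweber exactness (advertised in the abstract as a consequence of Real orientability).

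First I would compute the $E_2$-page. By Real Landweber exactness, $\pi_\star^{C_2} E_n$ is obtained from $\pi_\star^{C_2} MU_\mathbb{R}$ by base change along the classifying map of the Real formal group law of $E_n$. Since the $E_2$-page of the $MU_\mathbb{R}$ HFPSS is, by Hu--Kriz, a polynomial ring in generators $\bar{r}_i$ tensored with $\mathbb{Z}[u_{2\sigma}^{\pm}, a_\sigma]/(2a_\sigma)$, the same base change yields the claimed formula for $E_2^{s,t}(E_n^{hC_2})$: the $\bar{r}_i$ for $1 \leq i \leq n-1$ map to the deformation parameters $\bar{u}_i$, the class $\bar{r}_n$ maps to a unit (reflecting exact height $n$), and higher $\bar{r}_i$ map into the ideal they generate.

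Next I would identify the permanent cycles and deduce the differentials. The class $a_\sigma$ is always a permanent cycle as the Euler class of the sign representation, and $\bar{u}^{\pm}$ together with $\bar{u}_1, \ldots, \bar{u}_{n-1}$ are permanent cycles because they lift to underlying homotopy classes of $E_n$ that are canonically refined to $C_2$-equivariant classes by the Goerss--Hopkins--Miller action applied to algebras with involution (as in the proof of Theorem~\ref{FullVersion}). For the differentials, naturality applied to $MU_\mathbb{R} \to E_n$ translates the Hu--Kriz differentials
$$d_{2^{k+1}-1}\bigl(u_{2\sigma}^{2^{k-1}}\bigr) = \bar{r}_k \, \bar{u}^{2^k-1} a_\sigma^{2^{k+1}-1}$$
into differentials for $E_n$. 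Reading the image of $\bar{r}_k$ off the $2$-series of the Real formal group law of $E_n$ yields, for $1 \leq k \leq n-1$, a unit times $\bar{u}_k$, producing the first family of differentials; for $k = n$ it yields a unit (absorbed into the $\bar{u}^{2^n-1}$ factor after rescaling), producing the stated final differential. After $u_{2\sigma}^{2^{n-1}}$ is killed at $k=n$, the Leibniz rule and the multiplicative structure determine all remaining differentials on powers of $u_{2\sigma}$.

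The main obstacle I expect is making the identification of the image of $\bar{r}_k$ precise on the nose rather than merely up to units, so that the right-hand sides of the differentials match the theorem with no extraneous coefficients. This requires choosing coordinates on the Lubin--Tate deformation compatibly with the Real orientation so that $\bar{r}_k \mapsto \bar{u}_k$ exactly for $1 \leq k \leq n-1$, and so that the unit produced at $k=n$ cleanly absorbs into $\bar{u}^{2^n-1}$. A secondary subtlety is verifying convergence of the $RO(C_2)$-graded HFPSS in the presence of the $(\bar{u}_1, \ldots, \bar{u}_{n-1})$-adic completion, which should follow from running the spectral sequence filtration-by-filtration and invoking the corresponding convergence for $MU_\mathbb{R}$.
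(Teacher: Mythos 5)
Your broad strategy---pushing forward the Hu--Kriz differentials along the Real orientation and arguing by naturality---is the same as the paper's (which works with $BP_\mathbb{R}$ rather than $MU_\mathbb{R}$, a cosmetic 2-local choice). But there are two substantive problems with the blind proposal.

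\textbf{Circularity of Real Landweber exactness.} You invoke Real Landweber exactness of $E_n$ to read off the $E_2$-page, but in the paper Real Landweber exactness of $E_n$ is a \emph{consequence} of Theorem~\ref{FullVersion2}: one must first run the HFPSS to show that $E_n$ is strongly even before Hill--Meier's Real Landweber exact functor theorem applies. In fact nothing so heavy is needed for the $E_2$-page: it is simply $RO(C_2)$-graded group cohomology $H^*(C_2; \pi_\star E_n)$ with the Galois action, and the paper cites \cite[Corollary~4.7]{HillMeier} for the formula. There is also a conflation in the proposal between the $E_2$-page of the HFPSS and the target $\pi_\bigstar^{C_2} F(E{C_2}_+, E_n)$; the former is computed directly from group cohomology.

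\textbf{The permanent cycles $\bar{u}$ and $\bar{u}_i$.} This is the genuine missing idea. Naturality from $BP_\mathbb{R}$ only shows that the \emph{products} $\bar{u}_i \bar{u}^{2^i-1}$ (images of $\bar{v}_i$) and $\bar{u}^{2^n-1}$ (image of $\bar{v}_n$) are permanent cycles; it says nothing about the individual factors $\bar{u}$ and $\bar{u}_i$. Your appeal to Goerss--Hopkins--Miller and algebras with involution to ``canonically refine'' underlying homotopy classes to equivariant ones does not work: GHM lifts the $C_2$-action on the spectrum $E_n$, not the non-equivariant element $u \in \pi_2 E_n$ to a class in $\pi_\bigstar^{C_2} E_n$. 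The paper fills this gap by applying the Hill--Hopkins--Ravenel norm functor to the non-equivariant map $u: S^2 \to i_e^* E_n$, producing an honest element $N_e^{C_2}(u) = \bar{u}^2 \in \pi_{2\rho}^{C_2} E_n$; this gives $\bar{u}^2$ as a permanent cycle, and combining with the permanent cycle $\bar{u}^{2^n-1}$ (odd power) shows $\bar{u}$ itself is a permanent cycle, from which the $\bar{u}_i$ follow. Without this norm argument the proof does not close.

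Two smaller remarks: your worry about determining the image of $\bar{r}_k$ ``on the nose rather than up to units'' is not a real obstacle, because units in $\pi_0$ are automatically permanent cycles and invertible, so the $d_r$-differential is determined either way; the paper sidesteps this entirely by using the specific Honda formal group normalization in which $v_i \mapsto u_i u^{2^i-1}$ exactly. And when pushing forward a Hu--Kriz differential one must verify that the image of the source has survived to the relevant page and that the image of the target is nonzero there (the paper does this by a degree argument in the inductive step); your proposal glosses over this.
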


The existence of equivariant orientations renders computations that rely on the slice spectral sequence tractable.  This observation was first made in the solution of the Kervaire invariant problem by Hill, Hopkins, and Ravenel in 2009.  

In their landmark paper \cite{HHR}, Hill, Hopkins, and Ravenel established that the Kervaire invariant elements $\theta_j$ do not exist for $j \geq 7$ (see also \cite{HaynesKervaire, HHRCDM1, HHRCDM2} for surveys on the result).  A key construction in their proof is the spectrum $\Omega$, which detects the Kervaire invariant elements in the sense that if $\theta_j \in \pi_{2^{j+1}-2} \mathbb{S}$ is an element of Kervaire invariant 1, then the Hurewicz image of $\theta_j$ under the map $\pi_*\mathbb{S} \to \pi_*\Omega$ is nonzero.  

The detecting spectrum $\Omega$ is constructed using equivariant homotopy theory as the $C_8$-fixed point of a genuine $C_8$-spectrum $\Omega_\mathbb{O}$, which in turn is an equivariant localization of $MU^{((C_8))} := N_{C_2}^{C_8} MU_\mathbb{R}$.  In particular, there is a $C_8$-equivariant orientation 
$$MU^{((C_8))} \longrightarrow \Omega_\mathbb{O}.$$

For $G = C_{2^n}$, the $G$-spectrum $MU^{((G))}$ and its equivariant localizations are amenable to computations.  To analyze the $C_8$-equivariant homotopy groups of $\Omega_\mathbb{O}$, Hill, Hopkins, and Ravenel generalized the $C_2$-equivariant filtration of Hu--Kriz \cite{HuKriz} and Dugger \cite{DuggerKR} to a $G$-equivariant Postnikov filtration for all finite groups $G$.  They called this the \textit{slice filtration}.  Given any $G$-equivariant spectrum $X$, the slice filtration produces the slice tower $\{P^*X\}$, whose associated slice spectral sequence strongly converges to the $RO(G)$-graded homotopy groups $\pi_\bigstar^G X$.  

Using the slice spectral sequence, Hill, Hopkins, and Ravenel proved the Gap Theorem and the Periodicity Theorem, which state, respectively, that $\pi_i^{C_8} \Omega_\mathbb{O} = 0$ for $-4 < i < 0$, and that there is an isomorphism $\pi_*^{C_8} \Omega_\mathbb{O} \cong \pi_{*+256}^{C_8} \Omega_\mathbb{O}.$
The two theorems together imply that
$$\displaystyle \pi_{2^{j+1}-2} \Omega = \pi_{2^{j+1}-2}^{C_8} \Omega_\mathbb{O} = 0$$
for all $j \geq 7$, from which the nonexistence of the corresponding Kervaire invariant elements follows.  

Analogues of the Kervaire invariant elements exist at odd primes.  In 1978, Ravenel \cite{RavenelOdd} computed the $C_p$-homotopy fixed points of the Lubin--Tate spectrum $E_{p-1}$ and proved that the $p$-primary Kervaire invariant elements do not exist for all $p \geq 5$.

In light of Ravenel's work, Hill, Hopkins, and Ravenel had hoped that the homotopy fixed points of a certain Lubin--Tate theory would entail the nonexistence of the bona fide Kervaire invariant elements.  Indeed, they mentioned in \cite{HHRCDM2} that the Detection Theorem held for $E_4^{hC_8}$, which made it a promising candidate to resolve the Kervaire invariant problem.  However, because of the computational difficulties surrounding the homotopy fixed point spectral sequence, they could not prove the Gap Theorem and the Periodicity Theorem for $E_4^{hC_8}$. 

Instead, in \cite{HHR}, they opted to consider $\Omega_\mathbb{O}^{C_8}$, which serves to mimic $E_4^{hC_8}$, but benefits from the geometric rigidity it inherits from $MU^{((C_8))}$: once the theory of slice filtrations is set up, the Gap Theorem and the Periodicity Theorem are immediate.  

Despite the computational access granted via $MU^{((G))}$, its localizations are unsuitable for chromatic homotopy theory because the $E_2$-pages of their slice spectral sequences are too large and contain many unnecessary classes.  Thus, one cannot hope to resolve the $K(n)$-local sphere by fixed points of the localizations of $MU^{((G))}$.  

To address this situation, one could hope to quotient $MU^{((G))}$ by generators in its equivariant stable homotopy group in order to cut down the size of its slice spectral sequence and its coefficient group.  This can been done at heights $\leq 2$.  At higher heights, however, the quotienting process fails to preserve the higher coherent structure ($\mathbb{E}_\infty$-ness) of the spectrum.  

For example, even at $G = C_2$, the spectra $BP_\mathbb{R}\langle n \rangle$ and the Real Johnson--Wilson theories $E\mathbb{R}(n)$ are not known to be rings when $n \geq 3$ (see \cite{PhantomRing} and \cite[Remark 4.19]{HillMeier}).  They also have no clear connection to the Lubin--Tate spectra $E_n$.  Therefore, despite its computability, it is difficult to use $E\mathbb{R}(n)$ to obtain information about the higher real $K$-theories $E_n^{hG}$ and the $K(n)$-local sphere.  

Theorem~\ref{FullVersion} and Theorem~\ref{FullVersion2} combine the computational power of the slice spectral sequence with the import of the Lubin--Tate spectra.  Preponderant in chromatic homotopy theory, the Lubin--Tate spectra have smaller coefficient rings than the localizations of $MU^{((G))}$, so they are ideal candidates for resolving the $K(n)$-local sphere.  

It is a consequence of Theorem~\ref{FullVersion2} that $E_n$ is an even $C_2$-spectrum, and, in particular, has pure and isotropic slice cells.  In a forthcoming paper by the second author, Theorem~\ref{FullVersion} and Theorem~\ref{FullVersion2} will be used to compute the slice filtration of $E_n$ for all $n$, considered as a $G$-spectrum, where $G$ is a cyclic group of order a power of 2.  It will follow that $E_n$ has pure and isotropic $G$-slice cells.  

\begin{rmk}\rm
Once this is established, the proofs in \cite{HHR} are applicable to $E_n^{hG}$.  Hence $E_n^{hG}$ satisfies a Gap Theorem and a Periodicity Theorem, and, moreover, there is a factorization
$$\begin{tikzcd}
MU^{((G))} \ar[r] \ar[d] & E_n \\
D^{-1} MU^{((G))} \ar[ru] &.
\end{tikzcd}$$
In particular, there is a $C_8$-equivariant map from the detection spectrum $\Omega_\mathbb{O} \longrightarrow E_4$. 
\end{rmk}

Recently, Hill, Wang, Xu, and the second author used Theorem~\ref{FullVersion} to compute completely the slice spectral sequence of a $C_4$-equivariant height-4 Lubin--Tate theory \cite{HillShiWangXu}.  It is a current project to harness Theorem~\ref{FullVersion} to compute completely the $C_{2^m}$-fixed points of $E_n$ and study its Hurewicz images.

\subsection{Summary of Contents}
We now turn to a more detailed summary of the contents of this paper.  To prove Theorem~\ref{FullVersion}, we first consider a specific Lubin--Tate spectra.  Let $\widehat{E(n)}$ denote the $2$-periodic completed Johnson--Wilson theory, with
$$\pi_*(\widehat{E(n)})=\mathbb{Z}_2[[v_1,v_2,...,v_{n-1}]][u^{\pm}],\text{ } |u|=2.$$
This spectrum is a version of Morava $E$-theory.  In particular, it is a complex-oriented $\mathbb{E}_\infty$-ring spectrum.  Work of Goerss, Hopkins, and Miller \cite{GoerssHopkins, HopkinsMiller} identifies the space of $\mathbb{E}_\infty$-ring automorphisms of $\widehat{E(n)}$, and in particular ensures the existence of a central Galois $C_2$-action by $\mathbb{E}_\infty$-ring maps.  At the level of homotopy groups, $C_2$ acts as the formal inverse of the canonical formal group law.

There is also a natural $C_2$-action on $MU$, by complex-conjugation.  To this end, we first prove the following: 

\begin{thm} \label{MainTheorem}
The spectrum $\widehat{E(n)}$, with its central Galois $C_2$-action, is \textbf{Real oriented}.  That is to say, it receives a $C_2$-equivariant map 
$$MU_\mathbb{R} \longrightarrow \widehat{E(n)}$$
from the Real cobordism spectrum $MU_\mathbb{R}$.
\end{thm}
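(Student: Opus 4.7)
The approach is an equivariant application of the Goerss--Hopkins--Miller obstruction theory, applied to $\mathbb{E}_\infty$-algebras equipped with an involution. I envision the proof in three stages.

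\emph{Algebraic equivariance of the complex orientation.} First, I would observe that the canonical Landweber-exact map $f \colon MU \to \widehat{E(n)}$ is compatible with the two $C_2$-actions at the level of formal group laws. Complex conjugation $c$ on $MU$ satisfies $c^{*}(x_{MU}) = [-1]_{F_{MU}}(x_{MU})$ on the universal orientation, while the Goerss--Hopkins--Miller identification of the Galois involution $\sigma$ on $\widehat{E(n)}$ yields $\sigma^{*}(x_{E}) = [-1]_{F_{E}}(x_{E})$. Since $f$ is a map of formal group laws, it intertwines these formal inverses, and therefore $\sigma \circ f \simeq f \circ c$ as $\mathbb{E}_\infty$-ring maps.

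\emph{Lifting to a coherent equivariant structure.} Next, I would invoke Goerss--Hopkins--Miller in its strong form: the mapping space $\Map_{\mathrm{CAlg}}(MU, \widehat{E(n)})$ is discrete and identified with the set of complex orientations (equivalently, coordinates on the formal group) of $\widehat{E(n)}$. The $C_2$-actions on source and target induce a $C_2$-action on this discrete set, and the first step identifies $f$ as a fixed point. Since the mapping space has no higher homotopy, this fixed point lifts canonically to a point in the homotopy fixed point space $\Map_{\mathrm{CAlg}}(MU, \widehat{E(n)})^{hC_2}$, providing a $C_2$-equivariant $\mathbb{E}_\infty$-ring map in the $\infty$-category of $\mathbb{E}_\infty$-algebras with $C_2$-action.

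\emph{Passing to genuine $C_2$-spectra.} Finally, I would upgrade this to a map of genuine $C_2$-spectra. The spectrum $\widehat{E(n)}$ with its Galois action is naturally a Borel $C_2$-spectrum, so the space of genuine $C_2$-equivariant maps $MU_\mathbb{R} \to \widehat{E(n)}$ is equivalent to the space of Borel-equivariant maps from the underlying spectrum of $MU_\mathbb{R}$ (which is $MU$ with complex conjugation). The coherent equivariant map produced in the previous stage then supplies the desired arrow $MU_\mathbb{R} \to \widehat{E(n)}$.

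The principal obstacle, I expect, is the second stage: rigorously verifying that the induced $C_2$-action on $\Map_{\mathrm{CAlg}}(MU, \widehat{E(n)})$ acts on coordinates via the formal inverse. This requires deploying the full Goerss--Hopkins--Miller identification of the $\mathbb{E}_\infty$-automorphism group of $\widehat{E(n)}$ with the automorphism group of the universal deformation of the underlying formal group, and carefully tracking how this Galois involution composes with complex conjugation on the $MU$ side.
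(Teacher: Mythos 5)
There is a genuine gap in your second stage, and it is precisely the point the paper flags as the crux. You claim that $\Map_{\mathrm{CAlg}}(MU, \widehat{E(n)})$ is discrete and identified with the set of coordinates on the formal group, so that a fixed point on $\pi_0$ lifts for free to a homotopy fixed point. That is not what Goerss--Hopkins--Miller says. The theorem controls the space of $\mathbb{E}_\infty$-ring maps \emph{between} Lubin--Tate theories: it identifies $\Map_{\mathbb{E}_\infty}(\widehat{E(n)}, \widehat{E(n)})$ (up to weak equivalence) with the discrete Morava stabilizer group $\mathbb{G}_n$, and more generally identifies the moduli space of $\mathbb{E}_\infty$- (equivalently $\mathbb{A}_\infty$-) structures realizing $\pi_*\widehat{E(n)}$ with $B\mathbb{G}_n$. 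It says nothing about $\Map_{\mathbb{E}_\infty}(MU,\widehat{E(n)})$. The space of $\mathbb{E}_\infty$ complex orientations of a Lubin--Tate theory is not known to be discrete: power operations impose nontrivial constraints at the level of $\pi_0$, and the higher homotopy groups are nonzero in general. What \emph{is} discrete (by Quillen) is the set of homotopy-commutative ring maps $MU\to\widehat{E(n)}$, so your first stage is fine, but a homotopy-level equivariance is far from a coherent one; the obstructions to rectifying it live in $H^{s+1}(C_2;\pi_s\Map(MU,\widehat{E(n)}))$, and there is no reason for these to vanish a priori. The paper explicitly makes this point (see the remark at the end of Section 5): it is not a priori clear that the homotopy $C_2$-action on $\widehat{E(n)}$ lifts uniquely to a fully coherent one.

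The paper's actual route is structurally different and avoids this issue entirely. It never attempts to rectify a homotopy-equivariant orientation. Instead it \emph{constructs} $\widehat{E(n)}$ as an equivariant Thom spectrum: an equivariant map $X\to B^{\rho}GL_1(MU_\mathbb{R}P)$ produces, after $\Omega^\sigma$ and the Thom construction, an $\mathbb{A}_\infty$-$MU_\mathbb{R}P$-algebra with involution, and the $C_2$-equivariant map from $MU_\mathbb{R}P$ (hence from $MU_\mathbb{R}$) comes for free as the unit of this algebra. This is where the authors pay for the equivariance: one must check that the relevant attaching maps can be made equivariant, which is done via the even $C_2$-cell structure on $\mathbb{HP}^\infty$ with the conjugation-by-$i$ action. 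Only then is Goerss--Hopkins--Miller invoked, in its correct form, to upgrade the $\mathbb{A}_\infty$-action by involutions to an $\mathbb{E}_\infty$-action and to pin down that action as the central Galois one, by examining the discrete automorphism space of $\widehat{E(n)}$ itself. Your third stage (genuine versus Borel) is a correct and minor point, but without a replacement for the discreteness claim in stage two your argument does not go through.
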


Leveraging the Hill--Hopkins--Ravenel norm functor \cite{HHR}, Theorem~\ref{FullVersion} is a formal consequence of Theorem \ref{MainTheorem}.

To prove Theorem \ref{MainTheorem} it will be helpful to sketch a construction of $\widehat{E(n)}$ as a ring spectrum, not yet worrying about any $C_2$-actions.  Recall that there is a periodic version of complex cobordism, denoted $MUP$, that is an $\mathbb{E}_\infty$-ring spectrum.  We denote the symmetric monoidal ($\infty$-)category of $MUP$-module spectra by $\text{MUP\textbf{-Mod}}$.  The subgroupoid spanned by the unit and its automorphisms is the space $BGL_1(MUP)$, which is naturally an infinite loop space.  Associated to any map of spaces $f:X \rightarrow BGL_1(MUP)$ is a Thom $MUP$-module $\text{Thom}(f)$ \cite{ThomInfinity}.  The category of spaces over $BGL_1(MUP)$ is symmetric monoidal, and an associative algebra object in this category gives rise to an $\mathbb{A}_\infty$-algebra structure on its Thom spectrum \cite{ThomRing}.

Consider now the following diagram of categories:

\begin{equation*}
\begin{tikzcd} [column sep=large] \tag{$\star$}
\mathbb{A}_\infty\left(\textbf{Spaces}_{/BGL_1(MUP)}\right) \arrow{r}{Thom}& \mathbb{A}_\infty(\text{MUP}\textbf{-Mod}) \arrow{d}{L_{K(n)}} \arrow{r}{Forget} & \mathbb{A}_\infty(\textbf{Spectra}) \arrow{d}{L_{K(n)}}  \\
\textbf{Spaces}_{/B^2GL_1(MUP)} \arrow{u}{\Omega} & \mathbb{A}_\infty(\text{MUP}\textbf{-Mod}) \arrow{r}{Forget} & \mathbb{A}_\infty(\textbf{Spectra}) \\
 && \mathbb{E}_\infty(\textbf{Spectra}). \arrow[swap]{u}{Forget}
\end{tikzcd}
\end{equation*}

In Section \ref{ClassicalConstruction}, we will construct a certain map of spaces $X \rightarrow B^2GL_1(MUP)$.  Applying $\Omega$ and then the Thom spectrum construction, we obtain an $\mathbb{A}_\infty$-MUP-algebra $E(n)$ that is a $2$-periodic version of Johnson--Wilson theory.  The $K(n)$-localization of $E(n)$ is $\widehat{E(n)}$, equipped with the structure of an $\mathbb{A}_\infty$-MUP-algebra.

It is a consequence of work of Goerss, Hopkins, and Miller \cite{GoerssHopkins, HopkinsMiller} that we may lift the $\mathbb{A}_\infty$-ring spectrum underlying $\widehat{E(n)}$ to an $\mathbb{E}_\infty$-ring spectrum.  Indeed, letting $\mathcal{C}^{\simeq}$ denote the maximal subgroupoid of a category $\mathcal{C}$, they prove that the path-component of $\mathbb{A}_\infty(\textbf{Spectra})^{\simeq}$ containing $\widehat{E(n)}$ is \textit{equivalent} to a path component in $\mathbb{E}_\infty(\textbf{Spectra})^{\simeq}$, with the equivalence given by the forgetful functor.

Our strategy for the proof of Theorem \ref{MainTheorem} is to produce a Real orientation $MU_\mathbb{R} \rightarrow \overline{E(n)}$ into \textit{some} ring spectrum $\overline{E(n)}$ with $C_2$-action.  The $\overline{E(n)}$ we produce is obviously equivalent to $\widehat{E(n)}$ as a spectrum, and the $C_2$-action is obviously the Galois one up to homotopy.  However, it is \textit{not} at all obvious that the full, coherent $C_2$-action on $\overline{E(n)}$ is the Galois action.  To prove it, we must make full use of the Goerss--Hopkins--Miller theorem.

We produce $\overline{E(n)}$ via a $C_2$-equivariant lift of the above construction of $\widehat{E(n)}$:

\begin{cnstr} \label{MainConstruction}
In section \ref{ConstructionSection}, each of the categories in the diagram $(\star)$ will be equipped with a $C_2$-action, yielding an \textit{equivariant} diagram:

\begin{equation*} \label{equidiagram}
\begin{tikzcd} [column sep=large] \tag{$\star \star$}
\mathbb{A}_\infty\left(\textbf{Spaces}_{/BGL_1(MUP)}\right) \arrow[loop,distance=2em] \arrow{r}{Thom}& \mathbb{A}_\infty(MUP\textbf{-Mod}) \arrow[loop,distance=2em] \arrow{d}{L_{K(n)}} \arrow{r}{Forget} & \mathbb{A}_\infty(\textbf{Spectra}) \arrow[loop,distance=2em,swap]{}{op} \arrow{d}{L_{K(n)}}\\
\textbf{Spaces}_{/B^\rho GL_1(MUP)} \arrow[out=-130, in=-50, loop, distance=2em] \arrow{u}{\Omega^{\sigma}} & \mathbb{A}_\infty(MUP\textbf{-Mod}) \arrow[out=-130, in=-50, loop, distance=2em] \arrow{r}{Forget} & \mathbb{A}_\infty(\textbf{Spectra}) \arrow[out=-12, in=12, loop, distance=1.5em, swap]{}{\text{op}}  \\
&& \mathbb{E}_\infty(\textbf{Spectra}). \arrow[out=-130, in=-50, loop, distance=2em, swap]{}{trivial} \arrow[swap]{u}{Forget}
\end{tikzcd}
\end{equation*}

The action on $\mathbb{E}_\infty(\textbf{Spectra})$ will be the trivial $C_2$-action.  The action on $\mathbb{A}_\infty(\textbf{Spectra})$ will be the non-trivial $\text{op}$ action that takes an algebra to its opposite.
\end{cnstr}

\begin{rmk} By a homotopy fixed point in a category $\mathcal{C}$ with $C_2$-action we mean an object in the category $\mathcal{C}^{hC_2}$.  For example, a homotopy fixed point in $\mathbb{E}_\infty(\textbf{Spectra})$ with its trivial action is just an $\mathbb{E}_\infty$-ring spectrum with $C_2$-action by $\mathbb{E}_\infty$-ring maps.  A homotopy fixed point for the op action on $\mathbb{A}_\infty(\textbf{Spectra})$ is an $\mathbb{A}_\infty$-algebra $A$ equipped with an \textit{involution}, meaning a coherent algebra map $\sigma:A \rightarrow A^{\text{op}}$.  This is a Borel equivariant version of what other authors have called an $\mathbb{E}_\sigma$-ring, or a ring with anti-involution (see \cite{HillEquivDisc, DottoRealTHH}).  We believe the use of algebras with involutions to be the most interesting feature of our construction.
\end{rmk}

In Sections \ref{ThomSection} and \ref{sec:Thm1.1}, we will refine our map $X \rightarrow B^2GL_1(MUP)$ to an equivariant map $X \rightarrow B^{\rho}GL_1(MU_\mathbb{R}P).$  Applying $\Omega^{\sigma}$ produces a homotopy fixed point of $\mathbb{A}_\infty\left(\textbf{Spaces}_{/BGL_1(MUP)}\right)$, which in turn equips $E(n)$ with an $\mathbb{A}_\infty$-involution. After $K(n)$-localizing, we obtain a $C_2$-action on $\widehat{E(n)}$ by $\mathbb{A}_\infty$-involutions.  The Goerss--Hopkins--Miller Theorem \cite{GoerssHopkins, HopkinsMiller} proves that any such action on $\widehat{E(n)}$ may be lifted to one by $\mathbb{E}_\infty$-ring maps.  Since Goerss, Hopkins, and Miller furthermore calculate the entire space of $\mathbb{E}_\infty$-ring automorphisms of $\widehat{E(n)}$, we may determine any $\mathbb{E}_\infty$-$C_2$-action on $\widehat{E(n)}$ by its effect on homotopy groups.\\

In Section~\ref{sec:HFPSSEn}, we look towards computational applications of the above results.  For simplicity, we use a specific Morava $E$-theory $E_n$ that is defined via a lift of the height $n$ Honda formal group law over $\mathbb{F}_{2^n}$.  Its homotopy groups are 
$$\pi_*E_n = W(\mathbb{F}_{2^n})[[u_1, u_2, \ldots, u_{n-1}]][u^\pm].$$
Using Theorem~\ref{FullVersion} and leveraging Hu and Kriz's computation of the homotopy fixed point spectral sequence for $MU_\mathbb{R}$ \cite{HuKriz}, we prove Theorem~\ref{FullVersion2}.  As a corollary, we learn that as a $C_2$-spectrum, $E_n$ is \textit{strongly even} and \textit{Real Landweber exact} in the sense of Hill--Meier \cite{HillMeier}. 

\begin{thm}
$E_n$ is strongly even and Real Landweber exact.  More precisely, $\underline{\pi}_{k\rho-1}E_n = 0$ and $\underline{\pi}_{k\rho}E_n$ is a constant Mackey functor for all $k \in \mathbb{Z}$.  The Real orientation $MU_\mathbb{R} \to E_n$ induces a map 
$${MU_\mathbb{R}}_\bigstar(X) \otimes_{MU_{2*}} ({E_n})_{2*} \to {E_n}_\bigstar(X)  $$
which is an isomorphism for every $C_2$-spectrum $X$.  
\end{thm}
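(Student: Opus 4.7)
The plan is to deduce both claims from the HFPSS computation of Theorem~\ref{FullVersion2} and the Real orientation $MU_\mathbb{R}\to E_n$ supplied by Theorem~\ref{FullVersion}. I would first establish the strong evenness conditions. The underlying statement $\pi^e_{k\rho-1}E_n=\pi_{2k-1}E_n=0$ is immediate from $\pi_*E_n=W(\mathbb{F}_{2^n})[[u_1,\ldots,u_{n-1}]][u^{\pm}]$. To handle the $C_2$-fixed points I would read off the $E_\infty$-page of the HFPSS in the relevant degrees. In degree $k\rho$, the contributions come from permanent cycles in filtration zero, which are generated multiplicatively over $W(\mathbb{F}_{2^n})$ by $\bar{u}_1,\ldots,\bar{u}_{n-1}$ and $\bar{u}^{\pm1}$; inspection shows no differential in Theorem~\ref{FullVersion2} touches these classes. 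Comparing with the underlying answer via restriction identifies $\underline{\pi}_{k\rho}E_n$ with the constant Mackey functor on $\pi_{2k}E_n$.

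The main obstacle is verifying $\pi^{C_2}_{k\rho-1}E_n=0$. A monomial on the $E_2$-page contributing to degree $k\rho-1$ must involve $a_\sigma^b$ with $b\geq1$, since $\bar{u}$, $\bar{u}_i$, and $u_{2\sigma}$ have $\sigma$-weights with even integer offset. By induction on the $2$-adic valuation of the exponent of $u_{2\sigma}$, and using the differentials
$$d_{2^{k+1}-1}(u_{2\sigma}^{2^{k-1}}) = \bar{u}_k\bar{u}^{2^k-1}a_\sigma^{2^{k+1}-1}$$
(and the $k=n$ truncation) together with the Leibniz rule and the fact that $a_\sigma$ is a permanent cycle, I would show that every such monomial either supports a nonzero differential or is hit by one. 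This is essentially the bookkeeping Hu--Kriz carry out for $MU_\mathbb{R}$, transported along the Real orientation to $E_n$ and amplified by the $\bar{u}_i$-coefficients.

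Given strong evenness, Real Landweber exactness follows by the criterion of Hill--Meier. Classical Landweber exactness of $MU_*\to\pi_*E_n$ is well-known. Strong evenness ensures that $(E_n)_{2*}$ is flat over $MU_{2*}$ in the sense needed by Hill--Meier, so tensoring $({MU_\mathbb{R}})_\bigstar(-)$ with $(E_n)_{2*}$ over $MU_{2*}$ yields a well-defined $RO(C_2)$-graded Mackey-functor-valued homology theory on $C_2$-spectra. The Real orientation induces the natural transformation to $(E_n)_\bigstar(-)$, which is an isomorphism on $X=S^0$ by construction; Brown representability for equivariant homology theories then extends the isomorphism to all $C_2$-spectra.
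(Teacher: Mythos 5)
Your proposal follows the same route the paper takes: deduce that $\underline{\pi}_{k\rho}E_n$ is constant from the filtration-zero permanent cycles $\bar{u}_1,\dots,\bar{u}_{n-1},\bar{u}^{\pm}$, verify $\pi^{C_2}_{k\rho-1}E_n=0$ by reducing (via $\bar{u}$-periodicity) to the $(-1)$-stem and stratifying the classes $\bar{u}^{2\ell-1}u_{2\sigma}^{-\ell}a_\sigma^{4\ell-1}$ by the $2$-adic valuation of $\ell$, then invoke the Hill--Meier Real Landweber exact functor theorem. One small inaccuracy in your last paragraph: strong evenness does not ``ensure'' flatness of $(E_n)_{2*}$ over $MU_{2*}$ --- that is the classical Landweber exactness of the underlying spectrum, which is a separate hypothesis of the Hill--Meier criterion that you correctly note is standard; strong evenness is what makes the restriction maps into isomorphisms so that the Mackey-functor-valued claim reduces to the nonequivariant one.
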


The second author's detection theorem for $MU_\mathbb{R}^{hC_2}$, joint with Li, Wang, and Xu \cite{HurewiczImages}, allows us to conclude a detection theorem for $E_n^{hC_2}$.  Roughly speaking, as the height grows, an increasing amount of the Kervaire and $\bar{\kappa}$-families in the stable homotopy groups of spheres are detected by $\pi_* E_n^{hC_2}$.  More precisely, we prove in Section~\ref{sec:HurewiczImages} the following:

\begin{thm} [Detection theorem for $E_n^{hC_2}$] \hfill
\begin{enumerate}
\item For $1 \leq i, j \leq n$, if the element $h_{i} \in \Ext_{\mathcal{A}_*}^{1, 2^i}(\F, \F)$ or $h_{j}^2 \in \Ext_{\mathcal{A}_*}^{2, 2^{j+1}}(\F, \F)$ survives to the $E_\infty$-page of the Adams spectral sequence, then its image under the Hurewicz map $\pi_*\mathbb{S} \to \pi_* E_n^{hC_2}$ is nonzero.  
\item For $1 \leq k \leq n-1$, if the element $g_k \in \Ext_{\mathcal{A}_*}^{4, 2^{k+2}+2^{k+3}}(\F, \F)$ survives to the $E_\infty$-page of the Adams spectral sequence, then its image under the Hurewicz map $\pi_*\mathbb{S} \to \pi_*E_n^{hC_2}$ is nonzero. 
\end{enumerate}
\end{thm}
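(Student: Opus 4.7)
The plan is to deduce the detection theorem for $E_n^{hC_2}$ from the detection theorem for $MU_\mathbb{R}^{hC_2}$ established in \cite{HurewiczImages}, by pulling back along the Real orientation. Theorem~\ref{FullVersion} produces a $C_2$-equivariant ring map $MU_\mathbb{R} \to E_n$, and hence a factorization of Hurewicz maps
$$\pi_*\mathbb{S} \longrightarrow \pi_*MU_\mathbb{R}^{hC_2} \longrightarrow \pi_*E_n^{hC_2},$$
together with an induced map of $RO(C_2)$-graded homotopy fixed-point spectral sequences $\HFPSS(MU_\mathbb{R}^{hC_2}) \to \HFPSS(E_n^{hC_2})$. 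The detection theorem of \cite{HurewiczImages} furnishes, for each of the surviving Adams classes $h_i$, $h_j^2$ and $g_k$, an explicit permanent cycle $y$ in the $E_\infty$-page of $\HFPSS(MU_\mathbb{R}^{hC_2})$ witnessing its nontrivial Hurewicz image. The theorem therefore reduces to showing that the image of each such $y$ in the $E_\infty$-page of $\HFPSS(E_n^{hC_2})$ is again nonzero.

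To carry this out, I would first identify the image of $y$ at the $E_2$-level. The class $a_\sigma$ is pulled back from $H^*(BC_2;\underline{\mathbb{F}_2})$, so it maps to $a_\sigma$; the Hu--Kriz generators $\bar{r}_i$, which enter the detecting classes, map to explicit polynomials in the $\bar{u}_k$ and $\bar{u}^{\pm}$ of Theorem~\ref{FullVersion2}, determined by the map $MU_* \to (E_n)_{2*}$ classifying the universal deformation of the height-$n$ Honda formal group law. This identifies the image of each $y$ as a specific monomial in $a_\sigma$, $\bar{u}_k$ and $\bar{u}^{\pm}$, which is nonzero on $E_2$ by the explicit form of the coefficient ring $W(\mathbb{F}_{2^n})[[\bar{u}_1,\ldots,\bar{u}_{n-1}]][\bar{u}^{\pm}]$.

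The main obstacle is to verify that these nonzero $E_2$-classes survive to $E_\infty$. Every differential in $\HFPSS(E_n^{hC_2})$ is a multiplicative consequence of the formulas of Theorem~\ref{FullVersion2}, so ruling out a differential hitting the image of $y$ amounts to an analysis of $a_\sigma$-exponents and $\bar{u}_\bullet$-content. The numerical ranges $1 \leq i,j \leq n$ and $1 \leq k \leq n-1$ in the statement encode precisely the heights at which the relevant detecting class sits below the threshold $a_\sigma^{2^{n+1}-1}$ coming from the final differential in Theorem~\ref{FullVersion2}, and at which the monomial content avoids the targets of the earlier differentials $d_{2^{k+1}-1}(u_{2\sigma}^{2^{k-1}}) = \bar{u}_k\bar{u}^{2^k-1}a_\sigma^{2^{k+1}-1}$. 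A finite case-by-case check against these formulas, combined with the Leibniz rule and the module structure over $W(\mathbb{F}_{2^n})[[\bar{u}_1,\ldots,\bar{u}_{n-1}]][\bar{u}^{\pm}]$, then yields nonvanishing on $E_\infty$ and hence the desired nonzero Hurewicz image.
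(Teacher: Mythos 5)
Your high-level strategy is exactly the paper's (modulo using $MU_\mathbb{R}$ versus $BP_\mathbb{R}$, which is harmless 2-locally): factor the Hurewicz map $\pi_*\mathbb{S} \to \pi_*E_n^{hC_2}$ through $\pi_*BP_\mathbb{R}^{hC_2}$, push the explicit detecting permanent cycles of \cite{HurewiczImages} through the induced map of $RO(C_2)$-graded homotopy fixed point spectral sequences, compute the images at $E_2$, and check survival.

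There is, however, a genuine misconception in your last paragraph about \emph{where the numerical ranges $1 \le i, j \le n$ and $1 \le k \le n-1$ come from}. You attribute them to differential-length considerations --- the detecting monomial ``sits below the threshold $a_\sigma^{2^{n+1}-1}$'' and ``avoids the targets of the earlier differentials.'' That is not what enforces the bound. The ranges are a consequence of the purely algebraic fact that under the map $\pi_*BP \to \pi_*E_n$ classifying the height-$n$ Honda formal group, one has $v_i \mapsto 0$ for $i > n$, and hence $\bar v_i \mapsto 0$ in the corresponding map of $E_2$-pages. The Li--Shi--Wang--Xu detecting classes for $h_i$, $h_j^2$, and $g_k$ in $C_2\text{-}\HFPSS(BP_\mathbb{R})$ are, respectively, $\bar v_i a_\sigma^{2^i-1}$, $\bar v_j^2 a_\sigma^{2(2^j-1)}$, and $\bar v_{k+1}^4 u_{2\sigma}^{2^{k+1}} a_\sigma^{4(2^k-1)}$; these therefore map to \emph{zero already on the $E_2$-page} of $C_2\text{-}\HFPSS(E_n)$ as soon as $i>n$, $j>n$, or $k+1>n$. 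Your paragraph two asserts without verification that the image is always ``a specific monomial \dots which is nonzero on $E_2$,'' which is false outside the stated ranges; the range restriction in the theorem is exactly the boundary of where that assertion holds, not where a differential would kill the class. Inside the ranges your argument is sound, and the survival check against the differentials of Theorem~\ref{FullVersion2} is the right remaining step; but as written your proposal misidentifies the reason the theorem stops at height $n$, so you would be stuck (or, worse, tempted to claim more) when you tried to fill in the step ``nonzero at $E_2$.''
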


\begin{rmk}
We freely use the language of $\infty$-categories throughout this work, and will refer to an $\infty$-category simply as a category.  If $\mathcal{C}$ is a symmetric monoidal category, we use $\mathbb{A}_\infty(\mathcal{C})$ to denote the category of associative algebra objects in $\mathcal{C}$, and similarly use $\mathbb{E}_\infty(\mathcal{C})$ to denote commutative algebra objects.  We will use \textbf{Spaces} to denote the symmetric monoidal category of \textit{pointed} spaces under cartesian product.
\end{rmk}

\subsection{Acknowledgements} 
The authors would like to thank Jun-Hou Fung, Nitu Kitchloo, Guchuan Li, Vitaly Lorman, Lennart Meier, Denis Nardin, Eric Peterson, Doug Ravenel, David B Rush, Jay Shah, Guozhen Wang, Zhouli Xu, and Allen Yuan for helpful conversations.  We also thank Hood Chatham for his comprehensive and easy to use spectral sequence package, which produced all of our diagrams.  Lastly, and most importantly, we owe tremendous debts to Mike Hopkins and Mike Hill, both of whom offered crucial guidance at various stages of the project.  The first author's work was supported by an NSF GRFP fellowship under Grant DGE-$1144152$.

\section{Thom Spectra and Johnson--Wilson Theory} \label{ClassicalConstruction}

In this section we will describe a non-equivariant construction of $\widehat{E(n)}$, a Landweber exact Morava $E$-theory with
$$\pi_*(\widehat{E(n)}) \cong \mathbb{Z}_2[[v_1,v_2,\cdots,v_{n-1}]][u^{\pm}].$$
Our construction is a riff on Theorem $1.4$ of \cite{ThomQuotient}.

We begin with a brief review of the classical theory of Thom spectra.  Useful references, in the language of $\infty$-categories we espouse here, include \cite{ThomInfinity} and \cite{ThomRing}.

If $R$ is an $\mathbb{E}_\infty$-ring spectrum, then the category of $R$-modules acquires a symmetric monoidal structure.  The subcategory consisting of the unit and its automorphisms is denoted $BGL_1(R)$.  The symmetric monoidal structure equips $BGL_1(R)$ with an infinite loop space structure, and we write $BGL_1(R) \simeq \Omega^{\infty} \Sigma gl_1(R)$.  The space $GL_1(R) \simeq \Omega^{\infty} gl_1(R)$ sits in a pullback square
$$
\begin{tikzcd}
GL_1(R) \arrow{r} \arrow{d} & \Omega^{\infty} R \arrow{d} \\
\pi_0(R)^{\times} \arrow{r} & \pi_0(R),
\end{tikzcd}
$$
where $\pi_0(R)^{\times}$ is the subset of units of $\pi_0(R)$ under multiplication.  From this latter description of $GL_1(R)$, it is clear that 
$$\pi_*(BGL_1(R)) \cong \pi_{*-1}(GL_1(R)) \cong \pi_{*-1}(R), \text{ for } *>1.$$

Given a map of spaces $X \rightarrow BGL_1(R)$, we can form the Thom $R$-module by taking the colimit of the composite functor $X \rightarrow BGL_1(R) \subset R\textbf{-Mod}$.  If $X$ is a loop space and $X \rightarrow BGL_1(R)$ is a loop map, then the main theorem of \cite{ThomRing} shows that the associated Thom spectrum is an $\mathbb{A}_\infty$-$R$-algebra.  Similarly, if $X$ is an infinite loop space and $X \rightarrow BGL_1(R)$ an infinite loop map, then \cite{ThomRing} shows that the associated Thom spectrum is an $\mathbb{E}_\infty$-$R$-algebra.

Given two maps $f_1:X_1 \rightarrow BGL_1(R)$ and $f_2:X_2 \rightarrow BGL_1(R)$, we may use the infinite loop space structure on $BGL_1(R)$ to produce a product map
$$(f_1,f_2):X_1 \times X_2 \rightarrow BGL_1(R) \times BGL_1(R) \rightarrow BGL_1(R).$$
The Thom $R$-module $\text{Thom}(f_1,f_2)$ is the $R$-module smash product $\text{Thom}(f_1) \smsh_R \text{Thom}(f_2)$.

We may speak not only of $BGL_1(R)$, but also of the infinite loop space $\text{Pic}(R)$.  As a symmetric monoidal category, $\text{Pic}(R)$ is the full subcategory of $R\textbf{-Mod}^{\simeq}$ spanned by the invertible $R$-modules.  It is a union of path components each of which is equivalent to $BGL_1(R)$.  Again, \cite{ThomRing} explains that the colimit of an infinite loop map $X \rightarrow \text{Pic(R)} \subset R\textbf{-Mod}$ is an $\mathbb{E}_\infty$-$R$-algebra.  Our only use of this more general construction is to recall the following classical example:

\begin{exm} \label{MUPConstruction}
The complex $J$-homomorphism is an infinite loop map $BU \times \mathbb{Z} \rightarrow \text{Pic}(\mathbb{S})$, obtained via the algebraic $K$-theory construction on $\coprod BU(n) \rightarrow \text{Pic}(\mathbb{S})$.  The resulting Thom $\mathbb{E}_\infty$-ring spectrum is the periodic complex cobordism spectrum, denoted $MUP$.  The $2$-connective cover of spectra $bu \rightarrow ku$ yields an infinite loop map $BU \rightarrow BU \times \mathbb{Z}$, which induces a map of Thom $\mathbb{E}_\infty$-ring spectra $MU \rightarrow MUP$.

The map $J:BU \times \mathbb{Z} \rightarrow \text{Pic}(\mathbb{S})$ decomposes as a product of the infinite loop map $BU \rightarrow \text{BGL}_1(\mathbb{S})$ and the loop map $\mathbb{Z} \rightarrow \text{Pic}(\mathbb{S})$.  This yields an equivalence of Thom $\mathbb{A}_\infty$-ring spectra
$$MUP \simeq  MU \smsh \left(\bigvee_{n \in \mathbb{Z}} S^{2n} \right) \simeq \bigvee_{n \in \mathbb{Z}} \Sigma^{2n} MU,$$
which allows us to calculate $\pi_*(MUP) \cong \pi_*(MU)[u^{\pm}] \cong \mathbb{Z}[x_1,x_2,\cdots][u^{\pm}],$ where $|u|=2$ and $|x_i|=2i$.
The complex-conjugation action on $BU \times \mathbb{Z}$ by infinite loop maps yields a $C_2$-action on $MUP$ by $\mathbb{E}_\infty$-ring homomorphisms; we will make no use of this action in the current section, but much use of it in Sections \ref{ConstructionSection} and \ref{ThomSection}.
\end{exm}

We now specialize the discussion and embark on our construction of $E(n)$.  Suppose that we choose a non-zero $\alpha \in \pi_2(MUP) \cong \pi_3(BGL_1(MUP))$.  Then, e.g. by \cite[Theorem 5.6]{ThomQuotient} or \cite[Theorem 4.10]{ThomRing}, there is an equivalence of $MUP$-module spectra
$$\text{Thom}(\alpha) \simeq \text{Cofiber}(\Sigma^2 MUP \stackrel{\alpha}{\rightarrow} MUP) \simeq MUP/\alpha.$$

If we choose a sequence of elements $(\alpha_1,\alpha_2, \cdots, \alpha_n) \in \pi_2(MUP),$ we may produce a map $$S^3 \times S^3 \times \cdots S^3 \rightarrow BGL_1(MUP)$$ and an associated Thom $MUP$-module
\begin{eqnarray*}
\text{Thom}(\alpha_1,\alpha_2, \cdots, \alpha_n) &\simeq& (MUP/\alpha_1) \smsh_{MUP} (MUP/\alpha_2) \smsh_{MUP} \cdots \smsh_{MUP} (MUP/\alpha_n) \\
&\simeq& MUP/(\alpha_1,\alpha_2,\cdots,\alpha_n).
\end{eqnarray*}

If the sequence $(\alpha_1,\alpha_2,\cdots, \alpha_n)$ is \textit{regular} in $\pi_*(MUP)$, then the usual cofiber sequences imply that
$$\pi_*(MUP/(\alpha_1,\alpha_2,\cdots,\alpha_n)) \cong \pi_*(MUP)/(\alpha_1,\alpha_2,\cdots,\alpha_n).$$
Finally, we may even mod out an infinite regular sequence $(\alpha_1,\alpha_2, \cdots)$ by using the natural maps $$S^3 \rightarrow S^3 \times S^3 \rightarrow S^3 \times S^3 \times S^3 \rightarrow \cdots$$ to produce a filtered colimit of $MUP$-modules
$$MUP/\alpha_1 \rightarrow MUP/(\alpha_1,\alpha_2) \rightarrow MUP/(\alpha_1,\alpha_2,\alpha_3) \rightarrow \cdots \rightarrow MUP/(\alpha_1,\alpha_2, \cdots).$$

\begin{prop} \label{ClassicLoop}
Each map $\alpha_i:S^3 \rightarrow BGL_1(MUP)$ can be given the structure of a loop map.  In other words, the above construction of the $MUP$-module $MUP/(\alpha_1,\alpha_2,\cdots)$ can be refined to a construction of an $\mathbb{A}_\infty$-$MUP$-algebra.
\end{prop}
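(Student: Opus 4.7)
The key input is the $\infty$-categorical James splitting: $\Omega S^3 = \Omega \Sigma S^2$ is the free $\mathbb{A}_\infty$-monoid on $S^2$ in pointed spaces. Concretely, for any $\mathbb{A}_\infty$-monoid $M$ in pointed spaces, restriction along the James inclusion $S^2 \hookrightarrow \Omega S^3$ is an equivalence
$$\Map_{\mathbb{A}_\infty}(\Omega S^3, M) \xrightarrow{\ \simeq\ } \Map_*(S^2, M).$$
The plan is to apply this with $M = GL_1(MUP)$, which inherits an $\mathbb{E}_\infty$- (in particular $\mathbb{A}_\infty$-) space structure from the $\mathbb{E}_\infty$-ring structure on $MUP$.

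Given $\alpha_i \in \pi_2(MUP) \cong \pi_2(GL_1(MUP))$, I would view it as a pointed map $S^2 \to GL_1(MUP)$; the universal property above produces an essentially unique $\mathbb{A}_\infty$-monoid map $\Omega S^3 \to GL_1(MUP)$ extending it. Delooping is functorial on $\mathbb{A}_\infty$-monoids, and since $S^3$ is connected we have $B\Omega S^3 \simeq S^3$, so this descends to a loop map $S^3 \to BGL_1(MUP)$ whose class in $\pi_3(BGL_1(MUP))$ is $\alpha_i$. To handle the whole sequence, I would exploit that the infinite loop structure on $BGL_1(MUP)$ makes pointwise products of loop maps into loop maps, yielding
$$(S^3)^{\times n} \longrightarrow BGL_1(MUP)$$
for every $n$, and then pass to the filtered colimit along the basepoint inclusions $(S^3)^{\times n} \hookrightarrow (S^3)^{\times (n+1)}$ to obtain a single loop map $\colim_n (S^3)^{\times n} \to BGL_1(MUP)$.

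Applying the Thom spectrum functor $\mathbb{A}_\infty(\textbf{Spaces}_{/BGL_1(MUP)}) \to \mathbb{A}_\infty(MUP\textbf{-Mod})$ of diagram $(\star)$, guaranteed by \cite{ThomRing}, then converts this loop map into an $\mathbb{A}_\infty$-$MUP$-algebra. The underlying $MUP$-module is computed by the same colimit formula as before, since the Thom construction only sees the underlying map of spaces; hence it is still $MUP/(\alpha_1,\alpha_2,\ldots)$. There is no serious obstacle here — the argument reduces to the freeness property of $\Omega \Sigma S^2$, and the only required bookkeeping is checking that delooping sends $\mathbb{A}_\infty$-monoid maps to loop maps in the desired sense, which is a formality once one works consistently in the $\infty$-category of $\mathbb{A}_\infty$-monoids.
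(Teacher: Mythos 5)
Your proposal is correct, but it rests on a genuinely different key input than the paper's proof. You invoke the James theorem, namely that $\Omega S^3 = \Omega\Sigma S^2$ is the free $\mathbb{A}_\infty$-monoid on the connected pointed space $S^2$, so that a pointed map $S^2 \to GL_1(MUP)$ has an essentially unique $\mathbb{A}_\infty$-extension over $\Omega S^3$, which you then deloop. The paper instead works one level higher and runs explicit obstruction theory: it observes that $BS^3 \simeq \mathbb{HP}^\infty$ admits a cell structure with cells only in dimensions $4n$, so the obstructions to extending the adjoint map $S^4 \to B^2GL_1(MUP)$ over $\mathbb{HP}^\infty$ lie in $\pi_{4n-1}(MUP)$, which vanish precisely because $\pi_*(MUP)$ is concentrated in even degrees.

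The trade-off is worth noting. Your argument is more universal: freeness of $\Omega\Sigma S^2$ asks nothing of the target, so it would deloop \emph{any} class $S^3 \to BGL_1(R)$ for an arbitrary $\mathbb{E}_1$-ring $R$, and moreover the resulting deloop is canonical (no choices). The paper's argument hinges on the evenness of $\pi_*(MUP)$ and makes a non-canonical choice at each obstruction stage, but this is exactly the mechanism that transfers verbatim to the Real equivariant setting in Section~\ref{ThomSection}: there $\mathbb{HP}^\infty = B^{\sigma}S^{\rho+1}$, given its conjugation $C_2$-action, has an even equivariant cell structure with cells in multiples of $2\rho$, and the relevant obstruction groups vanish by the Hu--Kriz computation of $\pi_{\bigstar}(MU_\mathbb{R}P)$. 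Adapting your argument to that setting would require an equivariant analogue of James freeness for $\Omega^{\sigma}S^{\rho+1}$, which is less immediately off-the-shelf than the cell-by-cell obstruction calculus the paper deploys.
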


\begin{proof}
It will suffice to construct a map $\widetilde{\alpha_i}:BS^3 \rightarrow B^2GL_1(MUP)$ such that $\Omega \widetilde{\alpha_i} \simeq \alpha_i$.  This is equivalent to asking that the precomposition of the map $\widetilde{\alpha_i}:BS^3 \rightarrow B^2GL_1(MUP)$ with the inclusion $S^4 \rightarrow BS^3$ be adjoint to the map $\alpha_i:S^3 \rightarrow BGL_1(MUP)$.  In fact, any map $S^4 \rightarrow B^2GL_1(MUP)$ automatically admits at least one factorization through $BS^3$.  The reason is that $BS^3$ admits an even cell decomposition: there is a filtered colimit
$$S^4=Y_1 \rightarrow Y_2 \rightarrow Y_3 \rightarrow \cdots \rightarrow BS^3$$
and pushouts
$$
\begin{tikzcd}
S^{4n-1} \arrow{d} \arrow{r} & Y_{n-1} \arrow{d} \\
D^{4n} \arrow{r} & Y_{n}.
\end{tikzcd}
$$
This cell decomposition is easily seen from the model $BS^3 \simeq \mathbb{HP}^\infty$, the infinite dimensional quaternionic projective space, where it is the canonical cell-decomposition corresponding to the inclusions of the $\mathbb{HP}^{\ell}$.
The obstructions to factoring a map $Y_{n-1} \rightarrow B^2GL_1(MUP)$ through $Y_n$ therefore live in $\pi_{4n-1}B^2GL_1(MUP) \cong \pi_{4n-3} (MUP)$.  This group is $0$, as explained in Example \ref{MUPConstruction}.
\end{proof}

To summarize, if we choose any regular sequence $(\alpha_1,\alpha_2,\cdots) \in \pi_*(MUP) \cong \mathbb{Z}[x_1,x_2,\cdots][u^{\pm}]$, each element of which lies in degree $2$, then we may construct the quotient $MUP$-module $MUP/(\alpha_1,\alpha_2,\cdots)$ as an $\mathbb{A}_\infty$-$MUP$-algebra.  This fact was originally proven without the use of Thom spectra by Angeltveit \cite[Corollary~3.7]{AngeltveitAinfty}, who instead used an obstruction theory developed by Robinson \cite{RobinsonAinfty}.  The following standard lemma allows us to use Proposition \ref{ClassicLoop} to build Morava $E$-theories as $\mathbb{A}_\infty$-algebras:

\begin{lem} \label{fglLemma}
Let $\Gamma$ denote a formal group law of height $n$ over the field $\mathbb{F}_2$, and $E$ the associated Morava $E$-theory.  Then there is a map $MUP \rightarrow E$, classifying a universal deformation of $\Gamma$, which may be described as first taking the quotient of $MUP$ by a regular sequence $(\alpha_1,\alpha_2,\cdots)$ of degree $2$ classes and then performing $K(n)$-localization.
\end{lem}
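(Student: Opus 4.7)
The plan is to realize $E$ as the $K(n)$-localization of an $\mathbb{A}_\infty$-$MUP$-algebra obtained from $MUP$ by quotienting out an explicit regular sequence of degree $2$ classes.

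The first step is to rewrite $\pi_*(MUP) \cong \mathbb{Z}[x_1, x_2, \ldots][u^{\pm}]$ (with $|x_i| = 2i$, $|u| = 2$) using degree $2$ polynomial generators: setting $y_i := x_i u^{1-i}$ yields $\pi_*(MUP) \cong \mathbb{Z}[y_1, y_2, \ldots][u^{\pm}]$ with $|y_i| = 2$. I would choose the $x_i$ so that $x_{2^k - 1}$ is the $k$th Araki (equivalently Hazewinkel) generator $v_k \in \pi_{2(2^k-1)}(MU)$, so that $y_{2^k - 1} = v_k u^{2 - 2^k}$ records the height filtration inside the degree $2$ generators.

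Next, I would invoke the Lubin--Tate theorem to realize the universal deformation of $\mathbb{G}$ over $R := W(\mathbb{F}_2)[[u_1, \ldots, u_{n-1}]]$ as a $2$-periodic formal group law over $R[u^{\pm}]$, producing a classifying ring homomorphism $\varphi \colon \pi_*(MUP) \to R[u^{\pm}]$. One can arrange $\varphi$ so that $\varphi(y_{2^k - 1}) = u_k \cdot u$ for $1 \leq k < n$, $\varphi(y_{2^n - 1}) = u$, and $\varphi(y_i) = 0$ for all remaining $i$. Setting $\alpha := \{y_i : i \notin \{2^k - 1 : 1 \leq k \leq n\}\}$, the collection $\alpha$ is a regular sequence in $\pi_*(MUP)$, simply because the quotient $\pi_*(MUP)/(\alpha) \cong \mathbb{Z}[y_1, y_3, y_7, \ldots, y_{2^n - 1}][u^{\pm}]$ is a graded polynomial ring. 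By Proposition~\ref{ClassicLoop} and its infinite-sequence extension via the filtered colimit discussed above, $MUP/\alpha$ lifts to an $\mathbb{A}_\infty$-$MUP$-algebra whose homotopy is a Landweber exact $MUP_*$-module classifying a height $n$ formal group law.

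Finally, I would apply $L_{K(n)}$. Since $K(n)$-local spectra are $v_n$-periodic, this localization automatically inverts $v_n = y_{2^n - 1} u^{2^n - 2}$, and since $L_{K(n)}$ agrees with $I_n$-adic completion on Landweber exact complex-oriented theories, it completes the result with respect to $I_n = (2, v_1, \ldots, v_{n-1})$, yielding a spectrum with homotopy $R[u^{\pm}] = \pi_*(E)$. The main delicacy will be matching $L_{K(n)}(MUP/\alpha)$ with $E$ on the nose as an $MUP$-algebra, rather than merely abstractly: this follows from the characterization of $E$ as the essentially unique $K(n)$-local complex oriented ring spectrum whose formal group law is the universal deformation of $\mathbb{G}$, and the fact that $\varphi$ was built precisely to classify this deformation.
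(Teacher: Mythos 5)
Your construction has two genuine gaps compared to the paper's argument, and the first is a concrete miscount.

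First, your regular sequence $\alpha = \{y_i : i \notin \{2^k-1 : 1 \leq k \leq n\}\}$ leaves $y_{2^n-1}$ alive as a free polynomial generator, so
$$\pi_*(MUP/\alpha) \cong \mathbb{Z}[y_1,y_3,\ldots,y_{2^{n-1}-1},\,y_{2^n-1}][u^{\pm}],$$
which has one generator too many. After $K(n)$-localization (which inverts $v_n = y_{2^n-1}u^{2^n-2}$ and $I_n$-adically completes) you get $\pi_* \cong W(\mathbb{F}_2)[[u_1,\ldots,u_{n-1}]][y_{2^n-1}^{\pm},u^{\pm}]$, a Laurent extension of $E_*$ rather than $E_*$ itself.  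Your map $\varphi$ does send $y_{2^n-1} \mapsto u$, but that equation is not imposed in the quotient.  The paper's sequence explicitly includes the class $x_{2^n-1}u^{2-2^n}-u$, i.e.\ $y_{2^n-1}-u$, precisely to kill this extra degree of freedom.

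Second, and more subtly, your claim that ``one can arrange $\varphi(y_i)=0$ for all remaining $i$'' fails for a general height-$n$ formal group $\mathbb{G}$ over $\mathbb{F}_2$. For $j>n$, the value of $v_j$ modulo $\mathfrak{m}$ is forced by $\mathbb{G}$: over the non-separably-closed field $\mathbb{F}_2$ there are non-isomorphic formal groups of each height $\geq 2$, distinguished exactly by such coefficients, and for a nonsplit form you cannot set $v_j(\mathbb{G})=0$. The paper handles this by invoking \cite[\S 5.10]{HopkinsMiller} to choose a lift $L_1 : BP_* \to \mathbb{F}_2[u_1,\ldots,u_{n-1}]/\mathfrak{m}^2$ for which any further lift classifies a universal deformation, and then the right relation to impose is $y_{2^j-1} - \phi(v_j)$, where $\phi(v_j)$ is the $\mathbb{F}_2$-linear combination of $y_1, y_3, \ldots, y_{2^n-1}$ mirroring $L_1(v_j)$, not simply $y_{2^j-1}$. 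Your version quietly specializes to the Honda case; the paper's own remark after the definition of $E(n)$ flags precisely that ``not every Morava $E$-theory is obtained by quotienting out this particular sequence.'' (A small aside: Araki and Hazewinkel generators are both valid $2$-typical generators but are not equal, so ``equivalently'' is not quite right there; this however does not affect the argument.)
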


\begin{rmk}
If the reader prefers, they will lose no intuition by thinking of the regular sequence $$(\alpha_1,\alpha_2,\cdots) = (x_{2^n-1}u^{2-2^n}-u, x_2u^{-1},x_4u^{-3},x_5u^{-4},x_6u^{-5},x_8u^{-7}, \cdots),$$ where the classes $x_iu^{-i+1}$ that are included are those such that either
\begin{itemize}
\item $i$ is not one less than a power of $2$, \text{or}
\item $i$ is larger that $2^{n}-1$.
\end{itemize}
However, since there are non-isomorphic formal groups over $\mathbb{F}_2$, not every Morava $E$-theory is obtained by quotienting out this particular sequence.
\end{rmk}

\begin{dfn}
We denote by $E(n)$ the quotient of $MUP$ by the regular sequence $(\alpha_1,\alpha_2,\cdots)$ of Lemma \ref{fglLemma}, and say that $E(n)$ is a $2$-periodic form of Johnson--Wilson theory.  Proposition \ref{ClassicLoop} provides a (not necessarily unique) construction of $E(n)$ as an $\mathbb{A}_\infty$ $MUP$-algebra.  We are deliberately vague about which formal group law $\Gamma$ defines $E(n)$, so that we may handle all cases at once.
\end{dfn}

\begin{proof}[Proof of Lemma \ref{fglLemma}]
The formal group law $\Gamma$ is classified by some map of (ungraded) rings $\pi_*(BP) \rightarrow \mathbb{F}_2$.  View this map as the solid arrow in the diagram of ring homomorphisms
$$
\begin{tikzcd}
\pi_*(BP) \arrow[dashed]{rrr}{L_2} \arrow[dashed]{rrrd}{L_1} \arrow{rrrdd} & & & W(k)[[u_1,u_2,\cdots,u_{n-1}]] \arrow{d} \\
& & & \mathbb{F}_2[u_1,u_2,\cdots,u_{n-1}]/\mathfrak{m}^2 \arrow{d} \\
& & & \mathbb{F}_2.
\end{tikzcd}
$$
According to \cite[\S 5.10]{HopkinsMiller}, as long as the lift $L_1$ is chosen correctly, any further lift $L_2$ will classify a universal deformation.  Furthermore, we may assume that $v_i$ maps to $u_i$ for $i \le n-1$ under $L_1$, while $v_n$ maps to $1$.  Each $v_j$ for $j>n$ then maps to some $L_1(v_j)$ that is an $\mathbb{F}_2$-linear combination of $L_1(v_1),L_1(v_2),\cdots L_1(v_n)$.  Write $\phi(v_j)$ to denote the element of $\pi_*(BP)$ that is given by the same linear combination of $v_1,v_2,\cdots v_n$.  Then the map $L_2$ can be chosen to be the quotient by the regular sequence $(v_n-1,v_{n+1}-\phi(v_{n+1}),v_{n+2}-\phi(v_{n+2}),\cdots)$.

Using the invertible element $u$ to move elements by even degrees, we may identify $\pi_2(MUP)$ with $\pi_*(MU)$.  Inside of $\pi_*(MU)$ we identify $\pi_*(BP)$ by viewing $v_i$ as $x_{2^i-1}$.  This allows us to talk about $\phi(v_j)$ as a class in $\pi_2(MUP)$.

To obtain the lemma, one mods out by the regular sequence $(\alpha_1,\alpha_2,\cdots) \in \pi_2(MUP)$, where one mods out, in order of $i \in \mathbb{N}$:
\begin{itemize}
\item All $x_iu^{-i+1}$ with $i$ not one less than a power of $2$.
\item The class $x_{i}u^{-i+1}-u$ where $i=2^{n}-1$.
\item The classes $x_iu^{-i+1}-\phi(v_j)$ where $i=2^j-1$ for $j>n$.
\end{itemize}
\end{proof}

\section{Categories with Involutions} \label{ConstructionSection}

Recall that a key fact powering the constructions of the previous sections is that, if $R$ is an $\mathbb{E}_\infty$-ring, then any map of loop spaces
$$\Omega X \longrightarrow \Omega B^2GL_1(R) \simeq BGL_1(R)$$
induces an $\mathbb{E}_1$-Thom spectrum.

In the $C_2$-equivariant setting, there are two sorts of loops one can take.  If $X$ is a $C_2$-space, one can again take ordinary loops, denoted $\Omega X$, but one can also consider based maps from the representation sphere $S^{\sigma}$, which is denoted $\Omega^{\sigma} X$.  If $R$ is a $C_2$-$\mathbb{E}_\infty$-ring spectrum, we shall be interested in $C_2$-equivariant maps
$$\Omega^{\sigma} X \longrightarrow \Omega^{\sigma} B^{\rho} GL_1(R) \simeq BGL_1(R).$$
The structure carried by such a Thom spectrum is no longer that of an $\mathbb{E}_1$-ring, but rather that of a so-called $\mathbb{E}_\sigma$-ring, or \textit{ring with involution}, or to some authors a \textit{ring with anti-involution}.  A number of independent, detailed discussions of $\mathbb{E}_\sigma$-algebras have appeared since the first edition of this preprint was released, including particularly nice exposition in \cite[\S 2.2]{HillEquivDisc} and \cite[\S 2]{DottoRealTHH}.  The discussion of equivariant Thom spectra in \cite{BehrensWilson} is also very relevant.  All of these authors work in the genuine $C_2$-equivariant category, and prove general theorems of which we only need and discuss Borel equivariant shadows.  In this section, we include our original discussion of Borel equivariant $\mathbb{E}_\sigma$-algebras, and in particular note the existence of a diagram of categories with $C_2$-action and equivariant functors between them:

\begin{equation*}
\begin{tikzcd} [column sep=large] \tag{\ref{equidiagram}}
\mathbb{A}_\infty\left(\textbf{Spaces}_{/BGL_1(MUP)}\right) \arrow[loop,distance=2em] \arrow{r}{Thom}& \mathbb{A}_\infty(MUP\textbf{-Mod}) \arrow[loop,distance=2em] \arrow{d}{L_{K(n)}} \arrow{r}{Forget} & \mathbb{A}_\infty(\textbf{Spectra}) \arrow[loop,distance=2em,swap]{}{op} \arrow{d}{L_{K(n)}}\\
\textbf{Spaces}_{/B^\rho GL_1(MUP)} \arrow[out=-130, in=-50, loop, distance=2em] \arrow{u}{\Omega^{\sigma}} & \mathbb{A}_\infty(MUP\textbf{-Mod}) \arrow[out=-130, in=-50, loop, distance=2em] \arrow{r}{Forget} & \mathbb{A}_\infty(\textbf{Spectra}) \arrow[out=-12, in=12, loop, distance=1.5em, swap]{}{\text{op}}  \\
&& \mathbb{E}_\infty(\textbf{Spectra}). \arrow[out=-130, in=-50, loop, distance=2em, swap]{}{trivial} \arrow[swap]{u}{Forget}
\end{tikzcd}
\end{equation*}

\begin{rmk}
For us, an equivariant functor $F:\mathcal{C}_1 \rightarrow \mathcal{C}_2$ between categories with $C_2$-action is an arrow in the functor category $\Hom(BC_2,\textbf{Cat}_\infty)$.  Such an arrow contains a substantial amount of data, and is in particular \textit{not} determined by its underlying functor $F_{\text{underlying}}$ of non-equivariant categories.  For example, using $$\sigma_i:\mathcal{C}_i \rightarrow \mathcal{C}_i$$ to denote the $C_2$-action on $\mathcal{C}_i$, part of the data of $F$ is a choice of natural isomorphism
$$\sigma_2 \circ F_{\text{underlying}} \simeq F_{\text{underlying}} \circ \sigma_1.$$
\end{rmk}

\begin{rmk}
The reader who is comfortable with the existence of the above diagram of categories with (Borel) $C_2$-actions may safely skip the remainder of this section.  The rest of the section only discusses the meaning and existence of the above diagram, which we view as a summary of the formal (i.e., purely categorical) input we need for our main theorem.
\end{rmk}

Let $\textbf{MonCat}_{Lax}$ denote the category of monoidal categories and lax monoidal functors.  In the language of \cite[\S 4.1]{HA}, this is the category of coCartesian fibrations of $\infty$-operads ${\mathcal{C}^{\otimes} \rightarrow \mathcal{A}\text{ssoc}^{\otimes}}$, with morphisms maps of $\infty$-operads $\mathcal{C}_1 \rightarrow \mathcal{C}_2$ over ${\mathcal{A}\text{ssoc}^{\otimes}}$ that are not required to preserve coCartesian arrows. Remark $4.1.1.7$ in \cite{HA} constructs a $C_2$-action $rev$ on $\textbf{MonCat}_{Lax}$.  If $(\mathcal{C},\otimes)$ is a monoidal $\infty$-category, then $(\mathcal{C}_{rev},\otimes_{rev})$ has the same underlying category as $\mathcal{C}$ but the \textit{opposite} $\otimes$-structure, with $X \otimes_{rev} Y$ in $\mathcal{C}_{rev}$ calculated as $Y \otimes X$ in $\mathcal{C}$.  We call a homotopy fixed point for this $\text{rev}$ action a monoidal category $(\mathcal{C},\otimes)$ \textit{with involution}.  Such a category is equipped with a coherent equivalence $\mathcal{C} \stackrel{\simeq}{\rightarrow} \mathcal{C}_{rev}$.  This should be contrasted with a fixed point for the trivial action on $\textbf{MonCat}_{Lax}$, which would just be a monoidal category with $C_2$-action via monoidal functors.

Remark $4.1.1.7$ also constructs an equivalence between $\mathbb{A}_\infty$-algebra objects $A$ in $\mathcal{C}$ and $\mathbb{A}_\infty$-algebra objects $A^{\text{rev}}$ in $\mathcal{C}_{rev}$.  If $\mathcal{C}$ is equipped with an involution, then there is an induced $C_2$-action on $\mathbb{A}_\infty(\mathcal{C})$.  In other words, there is an equivariant functor
$$
\begin{tikzcd} [column sep=large]
\textbf{MonCat}_{Lax} \arrow[loop,distance=2em, swap]{}{rev} \arrow{r}{\mathbb{A}_\infty(\--)} & \textbf{Cat}_{\infty} \arrow[loop,distance=2em, swap]{}{trivial},
\end{tikzcd}
$$
and so a homotopy fixed point in $\textbf{MonCat}_{Lax}$ is sent to one in $\textbf{Cat}_{\infty}$.

Finally, we also consider the category $\textbf{SymMonCat}_{Lax}$ of symmetric monoidal categories and lax functors.  The last paragraph of Remark $4.1.1.7$ of \cite{HA} ensures that the sequence of forgetful functors
$$
\begin{tikzcd} [column sep=large]
\textbf{SymMonCat}_{Lax} \arrow{r}{\text{Forget}} & \textbf{MonCat}_{Lax} \arrow{r}{\text{Forget}} & \textbf{Cat}_{\infty}
\end{tikzcd}
$$
is equivariant, with the trivial $C_2$-action on $\textbf{SymMonCat}_{Lax}$, the rev action on $\textbf{MonCat}_{Lax}$, and the trivial action on $\textbf{Cat}_{\infty}$.

\begin{exm} \label{SymmetricMonoidalExample}
If $\mathcal{C}$ is any symmetric monoidal category, then the trivial action on $\mathcal{C}$ by symmetric monoidal identity functors induces an involution, and therefore an op action on $\mathbb{A}_\infty(\mathcal{C})$.  When we want to emphasize that we are considering $\mathbb{A}_{\infty}(\mathcal{C})$ as a $C_2$-equivariant category with its op action, we will write it as $\mathbb{A}_{\infty}^{\sigma}(\mathcal{C})$. There is an equivariant sequence of categories
$$
\begin{tikzcd}[column sep=large]
\mathbb{E}_\infty(\mathcal{C}) \arrow{r}{Forget} & \mathbb{A}^{\sigma}_\infty(\mathcal{C}) \arrow{r}{Forget} & \mathcal{C},
\end{tikzcd}
$$
where $\mathbb{E}_\infty(\mathcal{C})$ and $\mathcal{C}$ are given the trivial $C_2$-actions and $\mathbb{A}^{\sigma}_\infty(\mathcal{C})$ is given the op action.
\end{exm}

\begin{exm}
Consider the category $\textbf{Set}$ of sets, equipped with the cartesian symmetric monoidal structure.  The trivial $C_2$-action on $\textbf{Set}$ by symmetric monoidal identity functors allows us to view $\textbf{Set}$ as a homotopy fixed point for the trivial $C_2$-action on $\textbf{SymMonCat}_{Lax}$.  This equips the underlying monoidal category of $\textbf{Set}$ with a canonical involution, which in turn equips the category of monoids with a $C_2$-action.  This is the classical $\text{op}$ action that takes a monoid $M$ to its opposite monoid $M^{\text{op}}$, which has the same underlying set but the opposite multiplication.
\end{exm}

Taking $\mathcal{C}$ in Example \ref{SymmetricMonoidalExample} to be the category $\textbf{Spaces}$ of pointed spaces, we obtain the \text{op} action on $\mathbb{A}_\infty(\textbf{Spaces})$, written $\mathbb{A}^{\sigma}_\infty(\textbf{Spaces})$.  We call a homotopy fixed point for this action an $\mathbb{A}_\infty$-space \textit{with involution}; such spaces, considered as groupoids, are special cases of categories with involution.  Any spectrum $E$ with $C_2$-action has an underlying $\mathbb{A}_\infty$-space with involution $\Omega^{\infty} E$.

\begin{exm} \label{OverCat}
Suppose that $X$ is an $\mathbb{A}_\infty$-space with involution.  Then the monoidal category $\textbf{Spaces}_{/X}$ is equipped with an involution.  Concretely, this involution takes an algebra map $A \rightarrow X$ to the natural algebra map $A^{\text{op}} \rightarrow X^{\text{op}} \stackrel{\text{inv}}{\longrightarrow} X$.
\end{exm}

\begin{rmk}
If a monoid $M$ happens to be a group, then there is a canonical equivalence $M \simeq M^{\text{op}}$ defined by the inverse homomorphism $m \mapsto m^{-1}$.  Our next few observations exploit an analogue of this equivalence for \textit{grouplike} $\mathbb{A}_\infty$-spaces.  We denote by \textbf{LoopSpaces} the full subcategory of grouplike objects in $\mathbb{A}_\infty(\textbf{Spaces})$.  Notice that the property of being grouplike is preserved under the \text{op} action on $\mathbb{A}^{\sigma}_\infty(\textbf{Spaces})$, so there is an \text{op} action on \textbf{LoopSpaces}.
\end{rmk}

\begin{cnstr}
There is a diagram of \textit{equivalences} of equivariant categories
$$
\begin{tikzcd}
& \textbf{ConnectedSpaces} \arrow[loop,distance=2em, swap]{}{trivial} \arrow[swap]{ld}{\Omega^{\sigma}} \arrow{rd}{\Omega} \\
\textbf{LoopSpaces} \arrow[out=-130, in=-50, loop, distance=2em, swap]{}{\text{op}} && \textbf{LoopSpaces} \arrow[out=-130, in=-50, loop, distance=2em, swap]{}{\text{trivial}}
\end{tikzcd}
$$
The equivariant functors $\Omega$ and $\Omega^{\sigma}$ share the same underlying, non-equivariant functor.
\end{cnstr}

\begin{proof}
It is classical that $\Omega$ and the bar construction provide inverse equivalences of the non-equivariant categories \textbf{ConnectedSpaces} and \textbf{LoopSpaces}.  This category has a universal property: it is the initial pointed category with all connected colimits.  As such, any $C_2$-action on it admits an essentially unique equivalence with the trivial $C_2$-action.
\end{proof}

\begin{cor} \label{DeloopOverCat}
Suppose $X$ is a grouplike $\mathbb{A}_\infty$-space with involution.  Then there exists some connected space with $C_2$-action $B^{\sigma}X$ such that $\Omega^{\sigma} B^{\sigma} X \simeq X$.  There is a natural $C_2$-equivariant functor
$$\textbf{Spaces}_{/B^{\sigma} X} \stackrel{\Omega^{\sigma}}{\longrightarrow} \mathbb{A}^{\sigma}_\infty\left(\textbf{Spaces}_{/X}\right),$$
where the latter object is the category with $C_2$-action underlying the monoidal category with involution from Example \ref{OverCat}.
\end{cor}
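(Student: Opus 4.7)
The plan is to transport $X$ across the equivalence from the preceding construction and then invoke naturality of slicing.

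To produce $B^\sigma X$, I would appeal to the equivalence just established: $\Omega^{\sigma}$ is an equivalence of equivariant categories from $\textbf{ConnectedSpaces}$ (trivial $C_2$-action) to $\textbf{LoopSpaces}$ ($\text{op}$ action). Equivalences pass to homotopy fixed points, giving
$$\textbf{ConnectedSpaces}^{hC_2,\text{triv}} \simeq \textbf{LoopSpaces}^{hC_2,\text{op}}.$$
The right-hand side is, by definition, the category of grouplike $\mathbb{A}_\infty$-spaces with involution, while the left-hand side, because the action is trivial, is the category of connected spaces equipped with a $C_2$-action. Defining $B^\sigma X$ to be the preimage of $X$ under this equivalence therefore produces a connected $C_2$-space, and the equivalence itself furnishes the required identification $\Omega^\sigma B^\sigma X \simeq X$.

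For the functor, I would exploit the naturality of the slicing construction $(\mathcal{C},B) \mapsto \mathcal{C}_{/B}$ in the pair consisting of a monoidal category and a distinguished object. Applying this naturality to the equivariant equivalence $\Omega^\sigma$ and to the equivariant object $B^\sigma X \mapsto X$ yields an equivariant equivalence of monoidal categories with involution
$$\Omega^\sigma : \textbf{Spaces}_{/B^\sigma X} \xrightarrow{\simeq} \textbf{LoopSpaces}_{/X}.$$
Composing with the natural equivariant inclusion of $\textbf{LoopSpaces}_{/X}$ as the grouplike subcategory of $\mathbb{A}_\infty(\textbf{Spaces}_{/X})$ then gives the desired equivariant functor. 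On objects, it sends $f : Y \to B^\sigma X$ to the loop map $\Omega Y \to \Omega B^\sigma X \simeq X$, viewed as an $\mathbb{A}_\infty$-algebra in the monoidal overcategory of Example \ref{OverCat}.

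The main obstacle is bookkeeping rather than any essential difficulty: one must formalize the naturality of slicing in the equivariant $\infty$-categorical setting, and verify that the $C_2$-action it induces on $\textbf{LoopSpaces}_{/X}$ agrees with the op action on $\mathbb{A}_\infty(\textbf{Spaces}_{/X})$ inherited from the involution on $\textbf{Spaces}_{/X}$ in Example \ref{OverCat}. This latter match is morally the tautology that the op action on algebra objects is determined by the involution on the ambient monoidal category, a formal property of the construction $\mathbb{A}_\infty(-)$ recalled from \cite{HA} earlier in this section; the care required is simply to track which $C_2$-action one is working with at each step.
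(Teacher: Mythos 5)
Your approach matches the paper's implicit reasoning (the corollary carries no explicit proof; the paper simply states it after the construction of the equivalence $\Omega^{\sigma}\colon\textbf{ConnectedSpaces}\rightarrow\textbf{LoopSpaces}$ and then realizes the functor in question by slicing the composite of right adjoints displayed immediately afterward). The first half of your argument -- passing to homotopy fixed points of the equivalence to produce $B^{\sigma}X$ -- is exactly right.

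The second half is correct in spirit but overclaims in one spot: the asserted equivalence
$$\Omega^{\sigma}\colon\textbf{Spaces}_{/B^{\sigma}X}\xrightarrow{\simeq}\textbf{LoopSpaces}_{/X}$$
is not an equivalence. Slicing the equivalence of the preceding construction gives $\textbf{ConnectedSpaces}_{/B^{\sigma}X}\simeq\textbf{LoopSpaces}_{/X}$, but $\textbf{Spaces}_{/B^{\sigma}X}$ contains maps out of disconnected pointed spaces, and $\Omega^{\sigma}$ forgets all non-basepoint components, so it is not fully faithful on that larger slice. One must precompose with the $(-)_{0}$ (basepoint-component) functor $\textbf{Spaces}_{/B^{\sigma}X}\rightarrow\textbf{ConnectedSpaces}_{/B^{\sigma}X}$ -- this is exactly the first arrow in the paper's chain of right adjoints right after the corollary. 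Relatedly, $B^{\sigma}X$ is merely a connected pointed space (a single delooping of a grouplike $\mathbb{A}_{\infty}$-space), not itself an $\mathbb{A}_{\infty}$-space, so $\textbf{Spaces}_{/B^{\sigma}X}$ does not carry the monoidal-with-involution structure of Example \ref{OverCat}, and the statement should be made at the level of categories with $C_{2}$-action rather than monoidal categories with involution. Neither point invalidates your strategy -- the resulting functor is the one you describe on objects -- but the intermediate step needs to be restated as a functor rather than an equivalence, sourced from the connected slice.
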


Consider the sequence of right adjoints
$$
\begin{tikzcd} [column sep=large]
\textbf{Spaces} \arrow{r}{(\--)_0} & \textbf{ConnectedSpaces} \arrow{r}{\Omega^{\sigma}} & \mathbb{A}^{\sigma}_\infty(\textbf{Spaces}) \arrow{r}{Forget} & \textbf{Spaces}
\end{tikzcd}
$$

By Example \ref{SymmetricMonoidalExample}, this is an equivariant functor from \textbf{Spaces} with trivial action to \textbf{Spaces} with trivial action.  As such it sends any space with $C_2$-action $X$ to some other space with $C_2$-action, which by abuse of notation we denote $\Omega^{\sigma} X$. 

\begin{prop} \label{OmegaSigmaIdentification}
Suppose $X$ is a space with $C_2$-action.  Then the space with $C_2$-action $\Omega^{\sigma} X$ is the equivariant function space $\Hom(S^{\sigma},X)$.  In other words, the action on a loop $S^1 \rightarrow X$ is given by both precomposing with the complex conjugation on $S^1$ and postcomposing with the action on $X$.
\end{prop}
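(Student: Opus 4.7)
The plan is to unwind the composition of right adjoints defining $\Omega^\sigma$ and trace how the $C_2$-equivariance data combine to give a concrete action on the underlying space $\Omega X_0$. Viewing $X$ as a functor $BC_2 \to \textbf{Spaces}$, the space $\Omega^\sigma X$ arises by postcomposing with the three equivariant functors $(\--)_0$, $\Omega^\sigma$, and $\mathrm{Forget}$. The underlying (non-equivariant) functor of the composite is $\Omega$ applied to the basepoint component, so $\Omega^\sigma X$ has underlying space $\Omega X_0$. The nontrivial content of the proposition is the identification of its $C_2$-action.

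Since the source and target of the composite are $\textbf{Spaces}$ equipped with the \emph{trivial} $C_2$-action, all of the nontrivial equivariance data is concentrated in the middle arrow $\Omega^\sigma \colon \textbf{ConnectedSpaces}_{\mathrm{triv}} \to \textbf{LoopSpaces}_{\mathrm{op}}$. This data amounts to a natural $\mathbb{A}_\infty$-equivalence $\Omega Y \simeq (\Omega Y)^{\mathrm{op}}$ for each connected $Y$. The key step will be to identify this equivalence with pre-composition by the antipode of $S^1$. For this, I would use that the $\mathbb{A}_\infty$-structure on $\Omega Y$ arises from the cogroup structure on $S^1$, whose comultiplication is the pinch map $S^1 \to S^1 \vee S^1$; the opposite $\mathbb{A}_\infty$-structure is obtained by post-composing the pinch with the swap of $S^1 \vee S^1$, which is homotopic to pre-composing the pinch with the antipode of $S^1$. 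Hence the canonical equivalence $\Omega Y \simeq (\Omega Y)^{\mathrm{op}}$—the loop-inversion map $\gamma \mapsto \gamma^{-1}$—is implemented on underlying spaces by pre-composition with the antipode.

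Putting this together, for a $C_2$-space $X$ the generator of $C_2$ acts on a loop $\gamma \colon S^1 \to X$ by first pre-composing with the antipode on $S^1$ (from the $\mathrm{op}$-twist in the equivariance data of $\Omega^\sigma$) and then post-composing with the action on $X$ (from the $BC_2$-diagram underlying $X$). This is precisely the conjugation $C_2$-action on the function space $\Hom(S^\sigma, X)$, completing the identification.

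The main obstacle is the middle step: rigorously verifying that the abstractly-defined $\mathrm{op}$-equivalence on a loop space $\Omega Y$ really is implemented by pre-composition with the antipode. This reduces to inspecting the standard symmetric monoidal equivalence $\textbf{ConnectedSpaces} \simeq \textbf{LoopSpaces}$ and checking that the $\mathrm{op}$-involution on the target corresponds to the self-equivalence of $\textbf{ConnectedSpaces}$ induced by the orientation-reversal of $S^1$; equivalently, that $\gamma \mapsto \gamma^{-1}$ and $\gamma \mapsto \gamma \circ (-1)$ agree as self-maps of $\Omega Y$ up to coherent homotopy. Once this compatibility is pinned down—using, for instance, the cogroup structure on $S^1$ as above—the rest of the argument is a direct unwinding of definitions.
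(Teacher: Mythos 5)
Your overall strategy---unwind the composite of right adjoints and identify the op-twist on $\Omega^\sigma$ directly with precomposition by the antipode---is a genuinely different route from the paper's, which invokes Yoneda to reduce to a choice of $C_2$-action on $S^1$ and then pins it down by looking at $\pi_0$: the composite $\textbf{Groups}\to\textbf{Groups}$ carries the trivial action to the op action, and the only \emph{natural} isomorphism $G\to G^{\mathrm{op}}$ is $g\mapsto g^{-1}$, which rules out the trivial action on $S^1$. That reduction to discrete groups entirely sidesteps the coherence problem you correctly flag as the main obstacle in your approach.

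However, there is a concrete error in your middle step. You claim that postcomposing the pinch with the swap is homotopic to precomposing the pinch with the antipode, i.e.\ $\tau\circ p \simeq p\circ a$. This is false: on $H_1$, $(\tau\circ p)_*$ sends $1\mapsto(1,1)$ while $(p\circ a)_*$ sends $1\mapsto(-1,-1)$. The correct relation is the cogroup antipode axiom $p\circ a \simeq (a\vee a)\circ\tau\circ p$ (dual to $(gh)^{-1}=h^{-1}g^{-1}$), and it is \emph{this} identity---asserting that $a$ is a cogroup map from $(S^1,p)$ to $(S^1,\tau\circ p)$---that shows precomposition by $a$ defines an $\mathbb{A}_\infty$-map $(\Omega Y)^{\mathrm{op}}\to\Omega Y$. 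Your conclusion is right, but the justification needs this correction, and you would then still have to promote the point-set cogroup identity to a coherent $\mathbb{E}_1$-coalgebra statement, exactly the issue you anticipated. The paper's argument avoids that burden altogether: once the problem is reduced to identifying a natural isomorphism of discrete groups, naturality alone forces the inverse, and no higher coherence needs to be checked.
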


\begin{proof}
By the Yoneda lemma, $\Omega^{\sigma} X$ must be the equivariant function spectrum $\Hom(S^1,X)$ for \textit{some} $C_2$-action on $S^1$.  To determine that this action is the complex conjugation action, and not the trivial action, we look at the sequence of equivariant functors 
$$
\begin{tikzcd}[column sep=large]
\textbf{Groups} \arrow{r}{Bar} & \textbf{ConnectedSpaces} \arrow{r}{\Omega^{\sigma}} & \textbf{LoopSpaces} \arrow{r}{\pi_0} & \textbf{Groups},
\end{tikzcd}
$$
which connects groups with trivial action to groups with $\text{op}$ action.  The only \textit{natural} isomorphism between a group and its opposite is given by $g \mapsto g^{-1}$, which is non-trivial on underlying sets.
\end{proof}

Specializing the discussion, recall that $MUP$ is the Thom spectrum of the $J$ homomorphism $BU \times \mathbb{Z} \stackrel{J}{\longrightarrow} BGL_1(\mathbb{S})$.  Since the $J$ homomorphism is an infinite loop map, $MUP$ acquires the structure of an $\mathbb{E}_\infty$-ring spectrum.  The complex-conjugation action by infinite loop maps on $BU \times \mathbb{Z}$ gives $MUP$ a $C_2$-action by $\mathbb{E}_\infty$-ring maps.  This in turn induces $C_2$-actions by symmetric monoidal functors on the categories $\text{MUP}\textbf{-Mod}$ and $\textbf{Spaces}_{/BGL_1(MUP)}$.  There is a diagram of lax symmetric monoidal functors
$$
\begin{tikzcd}
\textbf{Spaces}_{/BGL_1(MUP)} \arrow{r}{Thom} & \text{MUP}\textbf{-Mod} \arrow{d}{L_{K(n)}} \arrow{r}{Forget} & \textbf{Spectra} \arrow{d}{L_{K(n)}} \\
& \text{MUP}\textbf{-Mod} \arrow{r}{Forget} & \textbf{Spectra}.
\end{tikzcd}
$$
Since the $C_2$-action on $MUP$ is unital, the entire diagram becomes $C_2$-equivariant once we equip \textbf{Spectra} with the trivial $C_2$-action.  We may thus view the diagram as one of morphisms in the category of homotopy fixed points of $\textbf{SymMonCat}_{Lax}$ with trivial action.  This in turn induces a diagram in the homotopy fixed point category of $\textbf{MonCat}_{Lax}$ with \text{rev} action, which yields a diagram of $C_2$-equivariant categories
$$
\begin{tikzcd}
\mathbb{A}_\infty^{\sigma}\left(\textbf{Spaces}_{/BGL_1(MUP)}\right) \arrow{r}{Thom} & \mathbb{A}^{\sigma}_\infty(\text{MUP}\textbf{-Mod}) \arrow{r}{Forget} \arrow{d}{L_{K(n)}} & \mathbb{A}^{\sigma}_\infty(\textbf{Spectra}) \arrow{d}{L_{K(n)}} \\
& \mathbb{A}^{\sigma}_\infty(\text{MUP}\textbf{-Mod}) \arrow{r}{Forget} & \mathbb{A}^{\sigma}_{\infty}(\textbf{Spectra}).
\end{tikzcd}
$$
This is nearly all of our diagram (\ref{equidiagram}).  To complete the diagram, we use Corollary \ref{DeloopOverCat} for ${X \simeq BGL_1(MUP)}$ and Example \ref{SymmetricMonoidalExample} for $\mathcal{C}$ the symmetric monoidal category of \textbf{Spectra}.  

\begin{rmk}
In the sequel, we will denote the space with $C_2$-action $B^{\sigma}BGL_1(MU_\mathbb{R}P)$ by $B^{\rho}GL_1(MU_\mathbb{R}P)$.
\end{rmk}

\section{An Equivariant Map \texorpdfstring{\text{to }$B^{\rho}GL_1(MU_\mathbb{R}P)$}{}} \label{ThomSection}

The previous Section \ref{ConstructionSection} constructs a $C_2$-equivariant functor
$$\textbf{Spaces}_{/B^{\rho} GL_1(MUP)} \rightarrow \mathbb{A}^{\sigma}_\infty(\textbf{Spectra}),$$
which remains $C_2$-equivariant after composing with $K(n)$-localization.  Here, $\textbf{Spaces}_{/B^{\rho}GL_1(MUP)}$ is granted its $C_2$-action via the one on the space $B^{\rho}GL_1(MUP)=B^{\rho}GL_1(MU_{\mathbb{R}}P)$.  The category $\mathbb{A}^{\sigma}_\infty(\textbf{Spectra})$ is equipped with the $\text{op}$ action of Example \ref{SymmetricMonoidalExample}.

A homotopy fixed point for $\textbf{Spaces}_{/B^{\rho}GL_1(MUP)}$ is just a map of spaces with $C_2$-action $X \rightarrow B^{\rho} GL_1(MUP)$, and such a map therefore gives rise to a homotopy fixed point of $\mathbb{A}^{\sigma}_\infty(\textbf{Spectra})$.  In other words, an equivariant map of spaces with $C_2$-action $f:X \rightarrow B^{\rho} GL_1(MUP)$ gives rise to a Thom $\mathbb{A}_\infty$-algebra with involution $(\Omega^{\sigma}X)^{\Omega^{\sigma}f}.$

One can apply the construction to $* \rightarrow B^{\rho}GL_1(MUP)$ to obtain $MUP$ itself as an $\mathbb{A}_{\infty}$-algebra with involution.  Using the equivariant map $* \rightarrow X$, we obtain a map of $\mathbb{A}_\infty$-rings with involution $MUP \rightarrow (\Omega^{\sigma}X)^{\Omega^{\sigma}f}$.  The canonical Real orientation $$\Sigma^{-2}\mathbb{CP}^{\infty} \rightarrow MU_{\mathbb{R}} \rightarrow MU_\mathbb{R}P$$
then equips $(\Omega^{\sigma} X)^{\Omega^{\sigma}f}$ with a Real orientation.  If $(\Omega^{\sigma} X)^{\Omega^{\sigma} f}$ happens to also be a $C_2$-equivariant homotopy commutative ring, then \cite[Theorem 2.25]{HuKriz} implies that it receives an equivariant homotopy commutative ring map from $MU_{\mathbb{R}}$.

In this section we will be concerned with the construction of a particular map of spaces with $C_2$-action into $B^{\rho}GL_1(MU_\mathbb{R}P)$; the underlying map of spaces will be the morphism
$$BS^3 \times BS^3 \times \cdots \rightarrow B^2GL_1(MUP)$$
constructed in Section \ref{ClassicalConstruction}.  Our aim is to construct both $2$-periodic Johnson--Wilson theory and Morava $E$-theory as $\mathbb{A}_\infty$-rings with involution.

\begin{rmk}
Recall that, among spaces with $C_2$-action, we may identify certain representation spheres $S^{a+b\sigma}$ as the one-point compactifications of real $C_2$-representations.  We use $\sigma$ to denote the sign representation, $1$ to denote the trivial representation, and the shorthand $\rho$ to denote the regular representation $1+\sigma$.  If $X$ is a space or spectrum with $C_2$-action, then we use $\pi_{a+b\sigma}(X)$ to denote $\pi_0$ of the space of equivariant maps
$S^{a+b\sigma} \rightarrow X$.  Of interest to us, Proposition \ref{OmegaSigmaIdentification} implies that $\pi_{a+b\sigma}(B^{\sigma} X) \cong \pi_{a+(b-1)\sigma}(X).$
\end{rmk}

In \cite{HuKriz}, the equivariant homotopy groups of $MU_{\mathbb{R}}P$ are computed.  For each $n$, $\pi_{n\rho-1}(MU_\mathbb{R}P) \cong 0$.  Additionally, there is a ring isomorphism
$$\pi_{*\rho}(MU_\mathbb{R}P) \cong \mathbb{Z}[\bar{x}_1,\bar{x}_2,\cdots][\bar{u}^{\pm}],$$
where $\bar{x}_i$ is in degree $i\rho$ and $\bar{u}$ is in degree $\rho$.  The forgetful map from the equivariant to ordinary homotopy groups
$\pi_{*\rho}(MU_\mathbb{R}P) \rightarrow \pi_{2*}(MUP)$ takes $\bar{x}_i$ to $x_i$ and $\bar{u}$ to $u$.

Since $GL_1(MU_\mathbb{R}P)$ is defined via a pullback square of spaces with $C_2$-action
$$
\begin{tikzcd}
GL_1(MU_\mathbb{R}P) \arrow{r} \arrow{d} & \Omega^{\infty} MUP \arrow{d} \\
\pi_0(MU_\mathbb{R}P)^{\times} \arrow{r} & \pi_0(MU_\mathbb{R}P),
\end{tikzcd}
$$
we learn that $\pi_{a+b \sigma}(B^{\rho}GL_1(MU_\mathbb{R}P)) \cong \pi_{(a-1)+(b-1)\sigma}(MU_\mathbb{R}P)$ whenever $a,b>1$.

Our next task is to understand the $C_2$-equivariant space $B^{\sigma}S^{\rho+1}$.  The analogue of the even cell structure that played a prominent role in Section \ref{ClassicalConstruction} is the following:

\begin{prop}
There is a $C_2$-action on $\mathbb{HP}^\infty$ so that $\mathbb{HP}^\infty$ arises as a filtered colimit 
$$Y_1=S^{2\rho} \rightarrow Y_2 \rightarrow Y_3 \rightarrow \cdots,$$
where there are homotopy pushout square of spaces with $C_2$-action
$$
\begin{tikzcd}
S^{2n\rho-1} \arrow{d} \arrow{r} & Y_{n-1} \arrow{d} \\
* \arrow{r} & Y_{n}.
\end{tikzcd}
$$
Furthermore, $\Omega^\sigma \mathbb{HP}^\infty \simeq S^{\rho+1}$.  We will therefore write $B^\sigma S^{\rho+1}$ to denote $\mathbb{HP}^\infty$ with this $C_2$-action. 
\end{prop}

\begin{proof}
This cell decomposition is due to Mike Hopkins.  Recall that, non-equivariantly, the cellular filtration on $BS^3$ agrees with the standard filtration
$$\mathbb{HP}^1 \rightarrow \mathbb{HP}^2 \rightarrow \cdots \rightarrow \mathbb{HP}^{\infty} \simeq BS^3,$$
where $\mathbb{HP}^{\infty}$ is the infinite-dimensional quaternionic projective space.  For us, the relevant $C_2$-action on this space is conjugation by $i$.  In other words, we act on a point
$$[z_0:z_1:z_2:\cdots]$$
by sending it to $$[iz_0i^{-1}:iz_1i^{-1}:\cdots].$$  From the expression $i (a+bi+cj+dk) i^{-1} = a+bi - cj -dk$ we learn both that the action is well-defined and that the $C_2$-cells attached are multiples of $2 \rho$.  Furthermore, with this $C_2$-action, the $C_2$-fixed points of $\mathbb{HP}^\infty$ are $\mathbb{CP}^\infty$ and the map ${\mathbb{CP}^\infty = (\mathbb{HP}^\infty)^{C_2} \to \mathbb{HP}^\infty}$ is the usual inclusion map.  

The non-equivariant map $S^4 \rightarrow BS^3 \simeq \mathbb{HP}^{\infty}$ adjoint to the identity is lifted to an equivariant map $S^{2\rho} \rightarrow \mathbb{HP}^{\infty}$, given by the inclusion $\mathbb{HP}^1 \rightarrow \mathbb{HP}^{\infty}$ under the described $C_2$-action.  By adjunction, this gives a map $S^{\rho+1} \to \Omega^\sigma \mathbb{HP}^\infty$ of $C_2$-spaces which is an underlying equivalence.  We will show that this map induces a $C_2$-equivalence by checking that it is an equivalence on $C_2$-fixed points.

The map $S^{\rho+1} \to \Omega^\sigma \mathbb{HP}^\infty$ is the composite map 
$$S^{\rho+1} \longrightarrow \Omega^{\sigma} \Sigma^\sigma S^{\rho+1} \longrightarrow \Omega^\sigma \mathbb{HP}^\infty,$$
where the first map is the unit map of the loop-suspension adjunction and the second map is obtained by applying $\Omega^\sigma(-)$ to the inclusion $S^{2\rho} \to \mathbb{HP}^\infty$.  On $C_2$-fixed points, we have the commutative diagram 
$$\begin{tikzcd}
(S^{\rho+1})^{C_2} = S^2 \ar[r, "\textbf{1}"] & (\Omega^\sigma \Sigma^\sigma S^{\rho+1})^{C_2} \ar[r, "\textbf{2}"] \ar[d, "\textbf{3}"]  & (\Omega^\sigma \mathbb{HP}^\infty)^{C_2} \simeq S^2 \ar[d, "\textbf{4}"] \\ 
&(\Sigma^\sigma S^{\rho+1})^{C_2} = S^2 \ar[r, "\textbf{5}"] \ar[d] &(\mathbb{HP}^\infty)^{C_2} = \mathbb{CP}^\infty \ar[d] \\ 
&S^4 \ar[r] & \mathbb{HP}^\infty.
\end{tikzcd}$$
The two vertical sequences are fiber sequences obtained by first mapping the cofiber sequence ${{C_2}_+ \to S^0 \to S^\sigma}$ to $\Sigma^\sigma S^{\rho+1}$ and $\mathbb{HP}^\infty$, respectively, and then taking $C_2$-fixed points.  Since the map $(\mathbb{HP}^\infty)^{C_2} \to \mathbb{HP}^\infty$ is the map $BS^1 \to BS^3$, its fiber $(\Omega^\sigma \mathbb{HP}^\infty)^{C_2}$ is $S^3/S^1 \simeq S^2$.  
Maps $\textbf{4}$ and $\textbf{5}$ are both the usual inclusion map $S^2 = \mathbb{CP}^1 \to \mathbb{CP}^\infty$.  

For any $C_2$-space $X$, consider the composite map
$$X \longrightarrow \Omega^\sigma \Sigma^\sigma X \longrightarrow  \Sigma^\sigma X,$$
where the first map is the unit map of the loop-suspension adjunction and the second map is obtained by mapping out of the inclusion $S^0 \to S^\sigma$ of $C_2$-spaces.  On $C_2$-fixed points, this map induces an equivalence $X^{C_2} \simeq X^{C_2} = (\Sigma^\sigma X)^{C_2} $.  This shows that the composite map $\textbf{3} \circ \textbf{1}$ is an equivalence.  It follows from the commutativity of the diagram that the composite map $\textbf{2} \circ \textbf{1}$ is also an equivalence.
\end{proof}

As a corollary, exactly as in Proposition~\ref{ClassicLoop}, we learn that any map $S^{2\rho} \rightarrow B^{\rho}GL_1(MU_\mathbb{R}P)$ factors through $B^{\sigma} S^{\rho+1}$.  This is because the map $S^{2\rho} \to B^\rho GL_1(MU_\mathbb{R}P)$ can be viewed as a map out of $Y_1$, and the obstruction to factor a map $Y_{n-1} \to B^{\rho} GL_1 (MU_{\mathbb{R}}P)$ through $Y_n$ lives in $\pi_{2n\rho-1}(B^\rho GL_1 MU_\mathbb{R}P) \cong \pi_{(2n-1)\rho-1} MU_\mathbb{R}P$, which is zero.  

Using the symmetric monoidal structure on $\textbf{Spaces}_{/B^{\rho}GL_1(MUP)}$, which commutes with the $C_2$-action, we may construct from any sequence $(\alpha_1,\alpha_2,\cdots) \in \pi_{\rho} MU_\mathbb{R}P$ a map
$$S^{\rho+1} \times S^{\rho+1} \times \cdots \rightarrow BGL_1(MU_\mathbb{R}P).$$
This then factors through at least one equivariant map
$$B^{\sigma}S^{\rho+1} \times B^{\sigma}S^{\rho+1} \times \cdots \rightarrow B^{\rho}GL_1(MU_\mathbb{R}P).$$

We choose for $(\alpha_0,\alpha_1,\cdots)$ the same sequence as in Lemma \ref{fglLemma}, with the $x_i$ replaced by $\bar{x}_i$.  The reader may prefer to consider the special case in which the sequence is $$(\bar{x}_{2^n-1}\bar{u}^{2-2^n}-\bar{u}, \bar{x}_2\bar{u}^{-1},\bar{x}_4\bar{u}^{-3},\bar{x}_5\bar{u}^{-4},\bar{x}_6\bar{u}^{-5},\bar{x}_8\bar{u}^{-7}, \cdots),$$ where the classes $\bar{x}_i\bar{u}^{-i+1}$ that are included in the sequence are all those such that either
\begin{itemize}
\item $i$ is not one less than a power of $2$.
\item $i$ is greater than $2^n-1$.
\end{itemize}

In any case, applying $\Omega^{\sigma}$ and then the Thom construction we obtain a homotopy fixed point of the category $\mathbb{A}^{\sigma}_\infty(MUP\textbf{-Mod}).$  The underlying $\mathbb{A}_\infty$-ring is $E(n)$, the $2$-periodic version of Johnson--Wilson theory constructed in Section \ref{ClassicalConstruction}.  Our constructions produce a coherent $\mathbb{A}_\infty$-ring map $E(n) \stackrel{\simeq}{\longrightarrow} E(n)^{\text{op}}$ lifting the complex-conjugation $C_2$-action $E(n) \longrightarrow E(n)$.  We denote this ring with involution by $E_\mathbb{R}(n)$.

\begin{rmk}
From this work, it seems that the natural action on $E(n)$ is by $\mathbb{A}_\infty$-involutions rather than $\mathbb{A}_\infty$-algebra maps.  However, we can sketch an approach to producing an action by $\mathbb{A}_\infty$-algebra maps in the spirit of this paper.

Using Theorem $1.4$ of \cite{ThomQuotient}, $E(n)$ can be built as a Thom spectrum of a map $SU \rightarrow BGL_1(MUP)$.  Obstruction theory easily lifts this to a map $BSU \rightarrow B^2GL_1(MUP)$, which produces the same involution we see above.  We may go further though, and note that $B^3SU$ also has an even cell structure.  This means that it is easy to produce maps $B^3 SU \rightarrow B^4 GL_1(MUP)$, but as noted in \cite[\S 6]{ChadwickMandell} it is not so easy to know which maps $BSU \rightarrow B^2GL_1(MUP)$ these lie over.  If one could produce $E(n)$ as a Thom $\mathbb{E}_3$-$MUP$-algebra in this way, non-equivariantly, it seems likely that one could produce an $\mathbb{E}_{2\sigma+1}$-structure on the equivariant $E(n)$.  In particular, this would mean the $C_2$-action on $E(n)$ is by $\mathbb{A}_\infty$-ring homomorphisms.

This may be of interest in light of \cite{PhantomRing}, in which Kitchloo, Lorman, and Wilson provide a homotopy commutative and associative ring structure \textit{up to phantom maps} on Real Johnson-Wilson theory.  We thank Kitchloo for pointing out to us that the difficulty with phantom maps disappears after $K(n)$-localization.
\end{rmk}

\begin{rmk}
Our arguments also show that the Real Morava $K$-theories $K_{\mathbb{R}}(n)$ \cite[Section 3]{HuKriz} have the structure of rings with involution.  More precisely, they are $\mathbb{E}_{\sigma}$-$MU_{\mathbb{R}}P$-algebras.
\end{rmk}

\section{Proof of Theorem~\ref{FullVersion}} \label{sec:Thm1.1}

In the previous section, we constructed an $\mathbb{A}_\infty$-ring spectrum $E(n)$ with a $C_2$-action by $\mathbb{A}_\infty$-involutions.  After $K(n)$-localizing, we obtain a $C_2$-action by involutions on Morava $E$-theory $\widehat{E(n)}$.

Now, consider the equivariant sequence of forgetful functors
$$\mathbb{E}_\infty(\textbf{Spectra}) \rightarrow \mathbb{A}^{\sigma}_\infty(\textbf{Spectra}) \rightarrow \textbf{Spectra},$$
where both $\mathbb{E}_\infty(\textbf{Spectra})$ and $\textbf{Spectra}$ are given the trivial $C_2$-action, but $\mathbb{A}^{\sigma}_\infty(\textbf{Spectra})$ is given the $\text{op}$ action.  We may restrict this sequence to an equivariant sequence of subcategories
$$\mathcal{C}_3 \rightarrow \mathcal{C}_2 \rightarrow \mathcal{C}_1,$$
where
\begin{itemize}
\item $\mathcal{C}_1$ is the category of all spectra equivalent to $\widehat{E(n)}$ and equivalences between them.
\item $\mathcal{C}_2$ is the category of $\mathbb{A}_\infty$-ring spectra with underlying spectrum $\widehat{E(n)}$, and equivalences between them.
\item $\mathcal{C}_3$ is the category of $\mathbb{E}_\infty$-ring spectra with underlying spectrum $\widehat{E(n)}$, and equivalences between them.
\end{itemize}

Note that a map of categories with $C_2$-action is equivalence if and only if the underlying non-equivariant functor is an equivalence of non-equivariant categories.  The Goerss--Hopkins--Miller theorem \cite{GoerssHopkins, HopkinsMiller} says that the map $\mathcal{C}_3 \rightarrow \mathcal{C}_2$ is an equivalence of categories.  It follows that any homotopy fixed point of $\mathcal{C}_2$ may uniquely be lifted to one of $\mathcal{C}_3$.  Thus, the $C_2$-action on $\widehat{E(n)}$ by $\mathbb{A}_\infty$-involutions has a unique lift to a $C_2$-action by $\mathbb{E}_\infty$-ring automorphisms.  According to Goerss--Hopkins--Miller \cite{HopkinsMiller}, the categories $\mathcal{C}_3$ and $\mathcal{C}_2$ are equivalent to $B\mathbb{G}_n$, where $\mathbb{G}_n$ is the Morava stabilizer group.  A $C_2$-action on $\widehat{E(n)}$ by $\mathbb{E}_\infty$-ring maps is therefore the data of a map $BC_2 \rightarrow B\mathbb{G}_n$, which is the data of a group homomorphism $C_2 \rightarrow \mathbb{G}_n$.  It follows by direct calculation that any $C_2$-action by $\mathbb{E}_\infty$-ring maps is determined by its effect on homotopy groups.  The Real orientation $MU_\mathbb{R} \rightarrow E_\mathbb{R}(n) \rightarrow \widehat{E(n)}$ determines that the $C_2$-action we have constructed is the one that acts by the formal inverse, proving Theorem \ref{MainTheorem}.

\begin{rmk}
Our discussion of algebras with involution, and our use of the Goerss--Hopkins--Miller Theorem, may both be entirely avoided if one only wants to know that the $C_2$-action on $\widehat{E(n)}$ is the Galois one \textit{in the homotopy category of spectra}.  It is, however, not a priori clear that there is a unique lift of this homotopy $C_2$-action on $\widehat{E(n)}$ to a fully coherent $C_2$-action.
\end{rmk}

To prove Theorem \ref{FullVersion}, we need to prove that there is a Real orientation of the Lubin--Tate theory $E_{(k,\Gamma)}$ associated to any finite height formal group law $\Gamma$ over a perfect field $k$ of characteristic $2$.  If the formal group law $\Gamma$ is defined over $\mathbb{F}_2$, then we may use a Real orientation of some $\widehat{E(n)}$ in order to orient $E_{(k,\Gamma)}$.  In general, there will be a map of Lubin--Tate theories $E_{(k,\Gamma)} \longrightarrow E_{(\bar{k},\overline{\Gamma})}$, where $\bar{k}$ is the algebraic closure of $k$ and $\overline{\Gamma}$ is the pushforward of $\Gamma$ to this algebraic closure.  Since any two height $n$ formal groups over an algebraically closed field are isomorphic, $\overline{\Gamma}$ is isomorphic to a Honda formal group defined over $\mathbb{F}_2$.  We will see in Section~\ref{subsec:HFPSSEnHC2} that this forces the $C_2$-equivariant homotopy fixed point spectral sequence of $E_{(\bar{k}, \overline{\Gamma})}$ to be regular in the sense of \cite[Definition~6.1]{MeierTMF}.  In particular, $\pi^{C_2}_{*\rho-1} E_{(\bar{k}, \overline{\Gamma})} \cong 0$ and $\pi^{C_2}_{*\rho} E_{(\bar{k},\overline{\Gamma})}$ is a copy of the non-equivariant homotopy groups $\pi_{2*} E_{(\overline{k},\overline{\Gamma})}$.  The latter statement implies that $E_{(\bar{k},\overline{\Gamma})}$ is a free module over $E_{(k,\Gamma)}$, and so in particular $E_{(k,\Gamma)}$ is a retract of $E_{(\bar{k},\overline{\Gamma})}$.  The former statement then tells us that $\pi^{C_2}_{*\rho-1} E_{(k,\Gamma)} \cong 0$, so that there are no obstructions to making a Real orientation of $E_{(k,\Gamma)}$ (\cite[Lemma~3.3]{HillMeier}).

\begin{rmk}
Since writing the first version of this paper, Lennart Meier has very cleanly formulated \cite{MeierTMF} the notion of an even ring spectrum having a \textit{regular homotopy fixed point spectral sequence.}  He generously attributes \cite[Example 6.11]{MeierTMF} to this work the theorem that Lubin--Tate $E_{(k,\Gamma)}$ have regular homotopy fixed point spectral sequences, but in fact we do not quite prove this here.  Rather, we prove in Section~\ref{subsec:HFPSSEnHC2} only that $E_{(k,\Gamma)}$ has regular $C_2$-HFPSS when $\Gamma$ is pushed forward from a formal group law defined over $\mathbb{F}_2$.  Nonetheless, Meier's Proposition 6.5 in \cite{MeierTMF} applies to show that, since the $C_2$-HFPSS for $E_{(\bar{k},\overline{\Gamma})}$ is regular, so must be the $C_2$-HFPSS for $E_{(k,\Gamma)}$.  Thus, it is really the combination of our work here with \cite[Proposition~6.5]{MeierTMF} that implies the regularity of the $C_2$-HFPSS for all Lubin--Tate theories.
\end{rmk}

If $G$ is a finite subgroup of the $\mathbb{E}_\infty$-ring automorphisms of $E_{(k,\Gamma)}$ containing the central $C_2$, there then arises a sequence of homotopy ring maps
$$N^{G}_{C_2} MU_{\mathbb{R}} \longrightarrow N^{G}_{C_2} E_{(k,\Gamma)} \longrightarrow E_{(k,\Gamma)}.$$
The existence of the last homomorphism follows from the fact that the norm is an adjunction between $\mathbb{E}_\infty$-rings with $C_2$-action and $\mathbb{E}_\infty$-rings with $G$-action (see \cite[\S 2.2]{HillMeier}).

\section{Real Landweber Exactness and Proof of Theorem~\ref{FullVersion2}} \label{sec:HFPSSEn}

In the remainder of the paper, for simplicity, we use a specific Morava $E$-theory $E_n$ that is defined via a lift of the height $n$ Honda formal group law over $\mathbb{F}_{2^n}$.  Its homotopy groups are 
$$\pi_*E_n = W(\mathbb{F}_{2^n})[[u_1, u_2, \ldots, u_{n-1}]][u^\pm].$$
and the $2$-typical formal group law over $\pi_* E_n$ is determined by the map $\pi_*BP \to \pi_*{E_n}$ sending 
$$v_i \mapsto \left \{ \begin{array}{ll} u_i u^{2^i -1} & 1 \leq i \leq n-1 \\ 
u^{2^n-1} & i = n \\ 
0 & i > n.
\end{array} \right.$$
Our results are all easily generalized to other variants of Morava $E$-theory.

In this section, we will show that $E_n$, as a $C_2$-spectrum, is Real Landweber exact in the sense of \cite{HillMeier}.  We do so by completely computing the $RO(C_2)$-graded homotopy fixed point spectral sequence of $E_n$.  

\subsection{$RO(C_2)$-graded homotopy fixed point spectral sequence of \texorpdfstring{$E_n$}{En}} \label{subsec:HFPSSEnHC2}

So far, we have constructed a $C_2$-equivariant map from 
$$MU_\mathbb{R} \to E_n.$$  
Here, the $C_2$-action on $MU_\mathbb{R}$ is by complex conjugation, and the $C_2$-action on $E_n$ is by the Goerss--Hopkins--Miller $E_\infty$-action.  The existence of this equivariant map will help us in computing the $C_2$-homotopy fixed point spectral sequence of $E_n$.  In particular, the map $MU_\mathbb{R} \to E_n$ induces the map of spectral sequences
$$C_2\text{-}\HFPSS(MU_\mathbb{R}) \to C_2\text{-}\HFPSS(E_n)$$ 
of $C_2$-equivariant homotopy fixed point spectral sequences.  Since both the complex conjugation action on $MU_\mathbb{R}$ and the Galois $C_2$-action on $E_n$ are by $E_\infty$-ring maps, both spectral sequences are multiplicative (the map between them is not necessarily a multiplicative map, but this is perfectly fine).  At this point, we will replace $MU_\mathbb{R}$ by $BP_\mathbb{R}$ because everything is 2-local, and argument below is exactly the same regardless of whether we are using $MU_\mathbb{R}$ or $BP_\mathbb{R}$.  Moreover, since $MU_\mathbb{R}$ splits as a wedge of suspensions of $BP_\mathbb{R}$'s, the homotopy fixed point spectral sequence of $BP_\mathbb{R}$ has the advantage of having fewer classes than $MU_\mathbb{R}$ while still retaining the important 2-local information that we need. 

By \cite[Corollary~4.7]{HillMeier}, the $E_2$-pages of the $RO(C_2)$-graded homotopy fixed point spectral sequences of $BP_\mathbb{R}$ and $E_n$ are 
\begin{eqnarray} 
E_2^{s, t}(BP_\mathbb{R}^{hC_2}) &=& \mathbb{Z}[\bar{v}_1, \bar{v}_2, \ldots] \otimes \mathbb{Z}[u_{2\sigma}^\pm, a_\sigma]/(2a_\sigma) \nonumber \\ 
E_2^{s, t} (E_n^{hC_2}) &=& W(\mathbb{F}_{2^n})[[\bar{u}_1, \bar{u}_2, \ldots, \bar{u}_{n-1}]][\bar{u}^\pm] \otimes \mathbb{Z}[u_{2\sigma}^\pm, a_\sigma]/(2a_\sigma). \nonumber 
\end{eqnarray}

On the $E_2$-page, the class $\bar{v}_i$ is in stem $|\bar{v}_i| = i\rho$ for $i \geq 1$; the class $\bar{u}_i$ is in stem $|\bar{u}_i| = 0$ for $1 \leq i \leq n-1$; and the class $\bar{u}$ is in stem $|\bar{u} | =\rho$.  The classes $u_{2\sigma}$ and $a_\sigma$ are in stems $2 - 2\sigma$ and $-\sigma$, respectively.  They can be defined more generally as follows:

\begin{dfn} [$a_V$ and $u_V$]
Let $V$ be a representation of $G$ of dimension $d$.  
\begin{enumerate}
\item $a_V \in \pi_{-V}^G S^0$ is the map corresponding to the inclusion $S^0 \hookrightarrow S^V$ induced by $\{0\} \subset V$.
\item If $V$ is oriented, $u_V \in \pi_{d-V}^G \HZ$ is the class of the generator of $H_d^{G}(S^V; \HZ)$.  
\end{enumerate}
\end{dfn}

\noindent\textit{Proof of Theorem~\ref{FullVersion2}}.  In \cite{HuKriz}, Hu and Kriz completely computed the $C_2$-homotopy fixed point spectral sequence of $MU_\mathbb{R}$ and $BP_\mathbb{R}$.  In particular, the classes $\bar{v}_i$ for all $i \geq 1$ and the class $a_\sigma$ are permanent cycles.  All the differentials are determined by the differentials 
$$d_{2^{k+1} -1} (u_{2\sigma}^{2^{k-1}}) =  \bar{v}_ka_\sigma^{2^{k+1}-1}, \, k \geq 1$$
and multiplicative structures (see Figures~\ref{fig:BPRKRSSS}, \ref{fig:BPRER(2)SSS}, and \ref{fig:BPRER(3)SSS}).  There are no nontrivial extension problems on the $E_\infty$-page.\\

\begin{figure}
\begin{center}
\makebox[\textwidth]{\hspace{-0.5in}\includegraphics[trim={3cm 9.5cm 4.5cm 2cm}, clip, scale = 0.9]{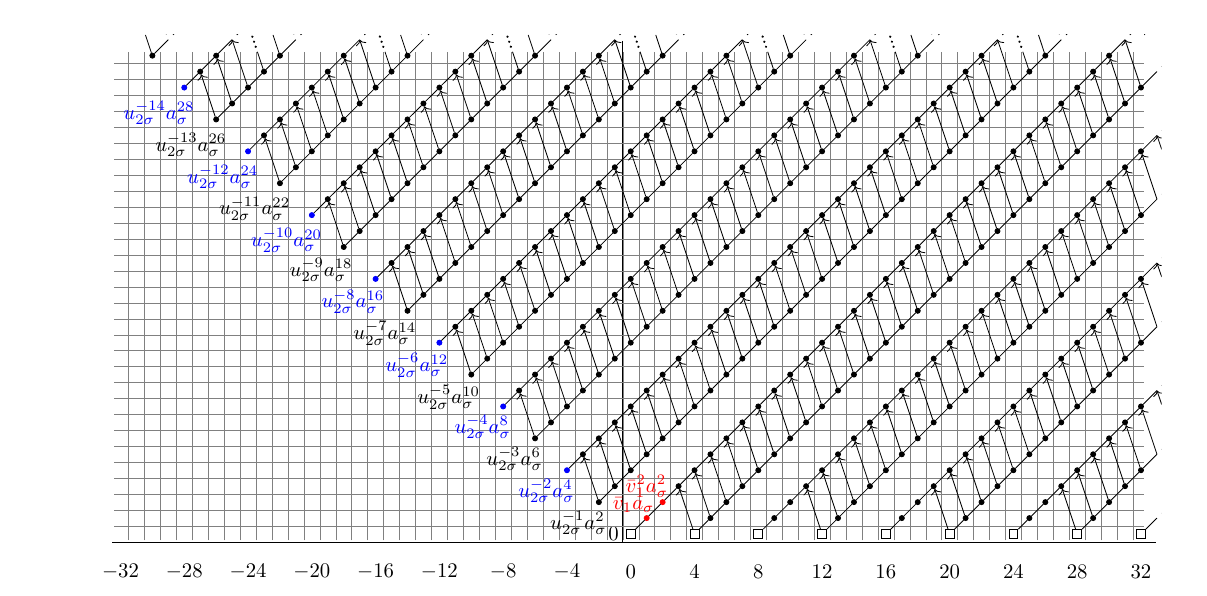}}
\end{center}
\begin{center}
\caption{Important $d_3$-differentials and surviving torsion classes on the $E_3$-page.}
\label{fig:BPRKRSSS}
\end{center}
\end{figure}

\begin{figure}
\begin{center}
\makebox[\textwidth]{\hspace{1in}\includegraphics[trim={5cm 7cm 0cm 2cm},clip, scale = 0.8]{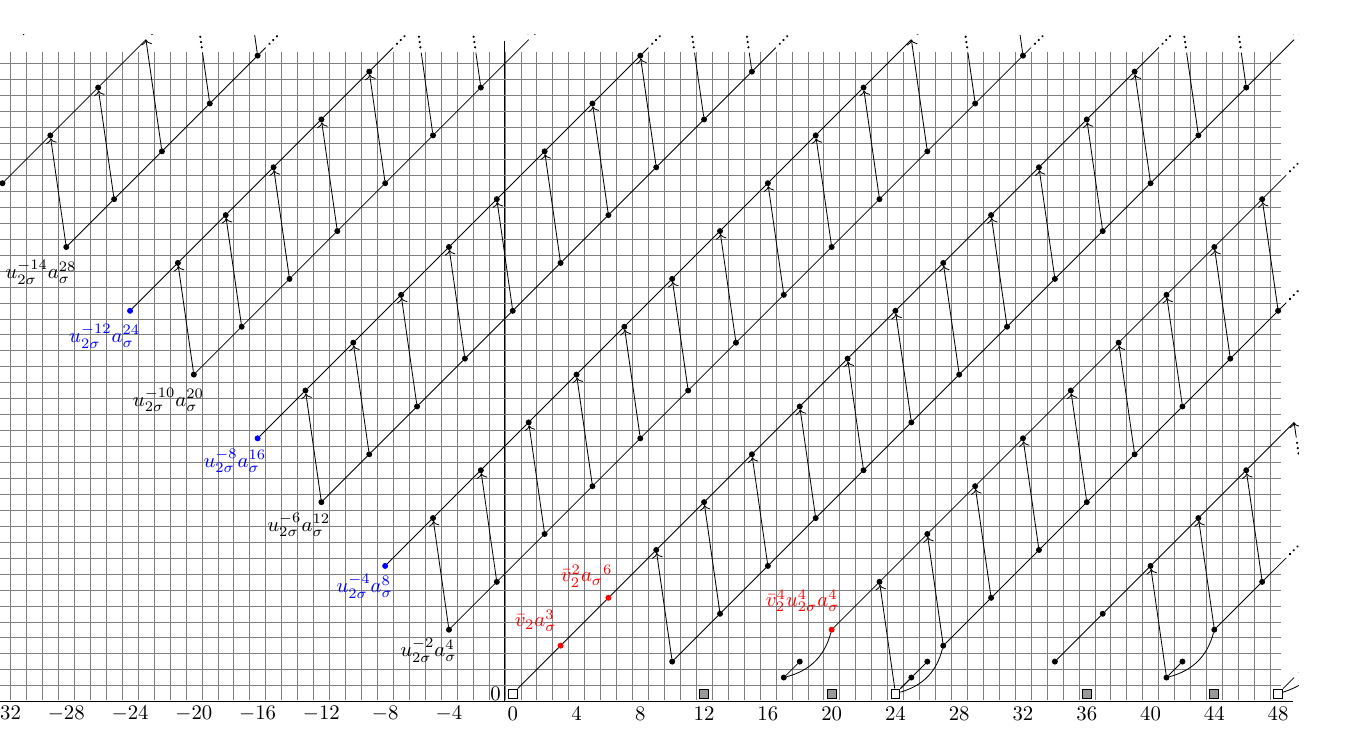}}
\end{center}
\begin{center}
\caption{Important $d_7$-differentials and surviving torsion classes on the $E_7$-page.}
\label{fig:BPRER(2)SSS}
\end{center}
\end{figure}

\begin{figure}
\begin{center}
\makebox[\textwidth]{\hspace{-0.7in}\includegraphics[trim={2cm 4.5cm 0cm 2cm},clip, scale = 0.8]{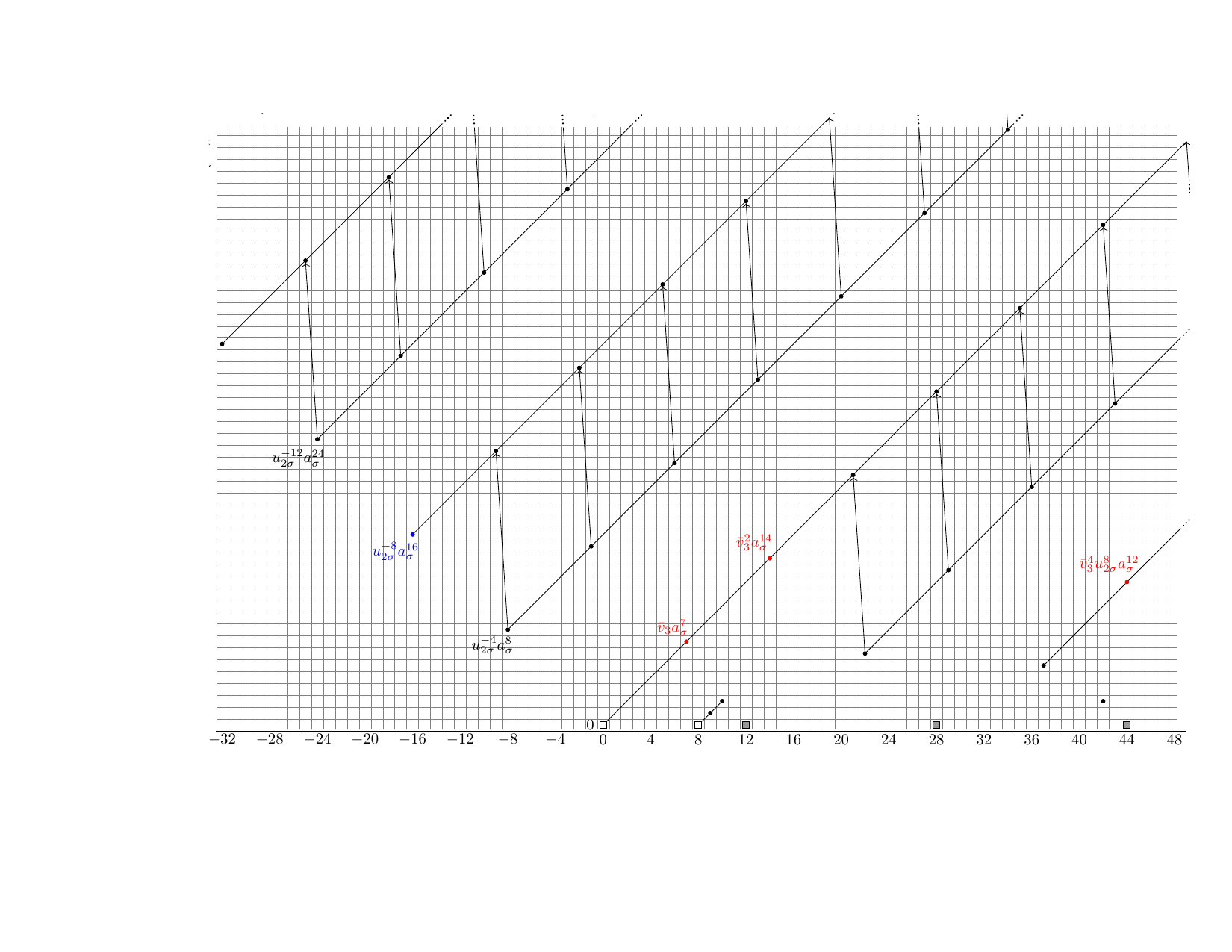}}
\end{center}
\begin{center}
\caption{Important $d_{15}$-differentials and surviving torsion classes on the $E_{15}$-page.}
\label{fig:BPRER(3)SSS}
\end{center}
\end{figure}


On the $E_2$-page, the map
$$C_2\text{-}\HFPSS(BP_\mathbb{R}) \to C_2\text{-}\HFPSS(E_n)$$ 
of spectral sequences sends the classes $u_{2\sigma} \mapsto u_{2\sigma}$, $\as \mapsto \as$, and 
$$\bar{v}_i \mapsto \left \{ \begin{array}{ll} \bar{u}_i \bar{u}^{2^i -1} & 1 \leq i \leq n-1 \\ 
\bar{u}^{2^n-1} & i = n \\ 
0 & i > n.
\end{array} \right.$$

We will first prove that the classes $\bar{u}_1$, $\ldots$, $\bar{u}_{n-1}$, $\bar{u}^\pm$, and $a_\sigma$ are permanent cycles in $C_2\text{-}\HFPSS(E_n)$.  Since the classes $\bar{v}_i$, $i \geq 1$, and $a_\sigma$ are permanent cycles in $C_2\text{-}\HFPSS(BP_\mathbb{R})$, their images are also permanent cycles in $C_2\text{-}\HFPSS(E_n)$.  This shows that the classes $\bar{u}_i \bar{u}^{2^i-1}$, $1 \leq i \leq n-1$, $\bar{u}^{2^n-1}$, and $a_\sigma$ are permanent cycles in $C_2\text{-}\HFPSS(E_n)$.  

Now, consider the non-equivariant map 
$$u: S^2 \to i_{e}^*E_n.$$  
Applying the Hill--Hopkins--Ravenel norm functor $N_e^{C_2}(-)$ (\cite{HHR}) produces the equivariant map 
$$N_e^{C_2}(u) = \bar{u}^2: S^{2\rho} \to N_{e}^{C_2} i_e^* E_n \to E_n,$$
where the last map is the co-unit map of the norm--restriction adjunction 
$$
N_e^{C_2}: \text{Commutative }C_2\text{-spectra} \rightleftarrows \text{Commutative spectra}: i_e^*.
$$
Since the element $N_e^{C_2}(u) = \bar{u}^2$ is an actual element in $\pi_\bigstar^{C_2}E_n$, it is a permanent cycle.  This, combined with the fact that $\bar{u}^{2^n-1}$ is a permanent cycle, shows that $\bar{u} = \bar{u}^{2^n-1} \cdot (\bar{u}^{-2})^{2^{n-1}}$ is a permanent cycle.  It follows from the previous paragraph that the classes $\bar{u}_1$, $\ldots$, $\bar{u}_{n-1}$, and $\bar{u}^\pm$ are all permanent cycles in $C_2\text{-}\HFPSS(E_n)$.  \\

It remains to produce the differentials in $C_2\text{-}\HFPSS(E_n)$.  We will show by induction on $k$, $1 \leq k \leq n$, that all the differentials in $C_2\text{-}\HFPSS(E_n)$ are determined by the differentials 
\begin{eqnarray*}
d_{2^{k+1} -1} (u_{2\sigma}^{2^{k-1}}) &=&  \bar{u}_k\bar{u}^{2^k-1}a_\sigma^{2^{k+1}-1}, \, \, \, 1 \leq k \leq n-1, \\ 
d_{2^{n+1}-1}(u_{2\sigma}^{2^{n-1}})&=& \bar{u}^{2^n-1}a_\sigma^{2^{n+1}-1}, \, \, \, k = n 
\end{eqnarray*}
and multiplicative structures. 

For the base case, when $k=1$, there is a $d_3$-differential 
$$d_3(u_{2\sigma}) = \bar{v}_1a_\sigma^3$$
in $C_2\text{-}\HFPSS(BP_\mathbb{R})$.  Under the map 
$$ C_2\text{-}\HFPSS(BP_\mathbb{R}) \to C_2\text{-}\HFPSS(E_n)$$
of spectral sequences, the the source is mapped to $u_{2\sigma}$ and the target is mapped to $\bar{u}_1\bar{u}a_\sigma^3$.  It follows that there is a $d_3$-differential 
$$d_3(u_{2\sigma}) = \bar{u}_1\bar{u} a_\sigma^3$$
in $C_2\text{-}\HFPSS(E_n)$.  Multiplying this differential by the permanent cycles produced before determines the rest of the $d_3$-differentials.  These are all the $d_3$-differentials because there are no more room for other $d_3$-differentials after these differentials. 

Suppose now that the induction hypothesis holds for all $1 \leq k \leq r-1 < n$.  For degree reasons, after the $d_{2^{r} -1}$-differentials, the next possible differential is of length $d_{2^{r+1}-1}$.  In $C_2\text{-}\HFPSS(BP_\mathbb{R})$, there is a $d_{2^{r+1}-1}$-differential 
$$d_{2^{r+1}-1}(u_{2\sigma}^{2^{r-1}}) = \bar{v}_{r}a_\sigma^{2^{r+1}-1}.$$
The map 
$$ C_2\text{-}\HFPSS(BP_\mathbb{R}) \to C_2\text{-}\HFPSS(E_n)$$
of spectral sequences sends the source to $u_{2\sigma}^{2^{r-1}}$ and the target to 
$$\bar{v}_{r}a_\sigma^{2^{r+1}-1} \mapsto \left\{
\begin{array}{ll} 
\bar{u}_{r}\bar{u}^{2^{r}-1}a_\sigma^{2^{r+1}-1}  & r < n \\ 
\bar{u}^{2^n-1}a_\sigma^{2^{n+1}-1} & r = n.
\end{array}
\right. $$
In particular, both images are not zero.  Moreover, the image of the target must be killed by a differential of length at most $2^{r+1}-1$.  By degree reasons, the image of the target cannot be killed by a shorter differential.  It follows that there is a $d_{2^{r+1}-1}$-differential 

$$d_{2^{r+1}-1}(u_{2\sigma}^{2^{r-1}}) = \left\{
\begin{array}{ll} 
\bar{u}_{r}\bar{u}^{2^{r}-1}a_\sigma^{2^{r+1}-1}  & r < n \\ 
\bar{u}^{2^n-1}a_\sigma^{2^{n+1}-1} & r = n.
\end{array}
\right.$$
The rest of the $d_{2^{r+1}-1}$-differentials are produced by multiplying this differential with permanent cycles.  After these differentials, there are no room for other $d_{2^{r+1}-1}$-differentials by degree reasons.  This concludes the proof of the theorem. 
\hfill $\square$ 

\begin{rmk}\rm
As an example, Figures~\ref{fig:C2HFPSSEnE3}--\ref{fig:C2HFPSSEnEinfty} show the differentials in the integer-graded part of $C_2\text{-}\HFPSS(E_3)$.  The spectral sequence converges after the $E_{15}$-page and we learn that $\pi_* E_3^{hC_2}$ is 32-periodic.  
\end{rmk}

\begin{figure}
\begin{center}
\makebox[\textwidth]{\hspace{-0.7in}\includegraphics[trim={0cm 7cm 1.5cm 2cm},clip,page = 1, scale = 1]{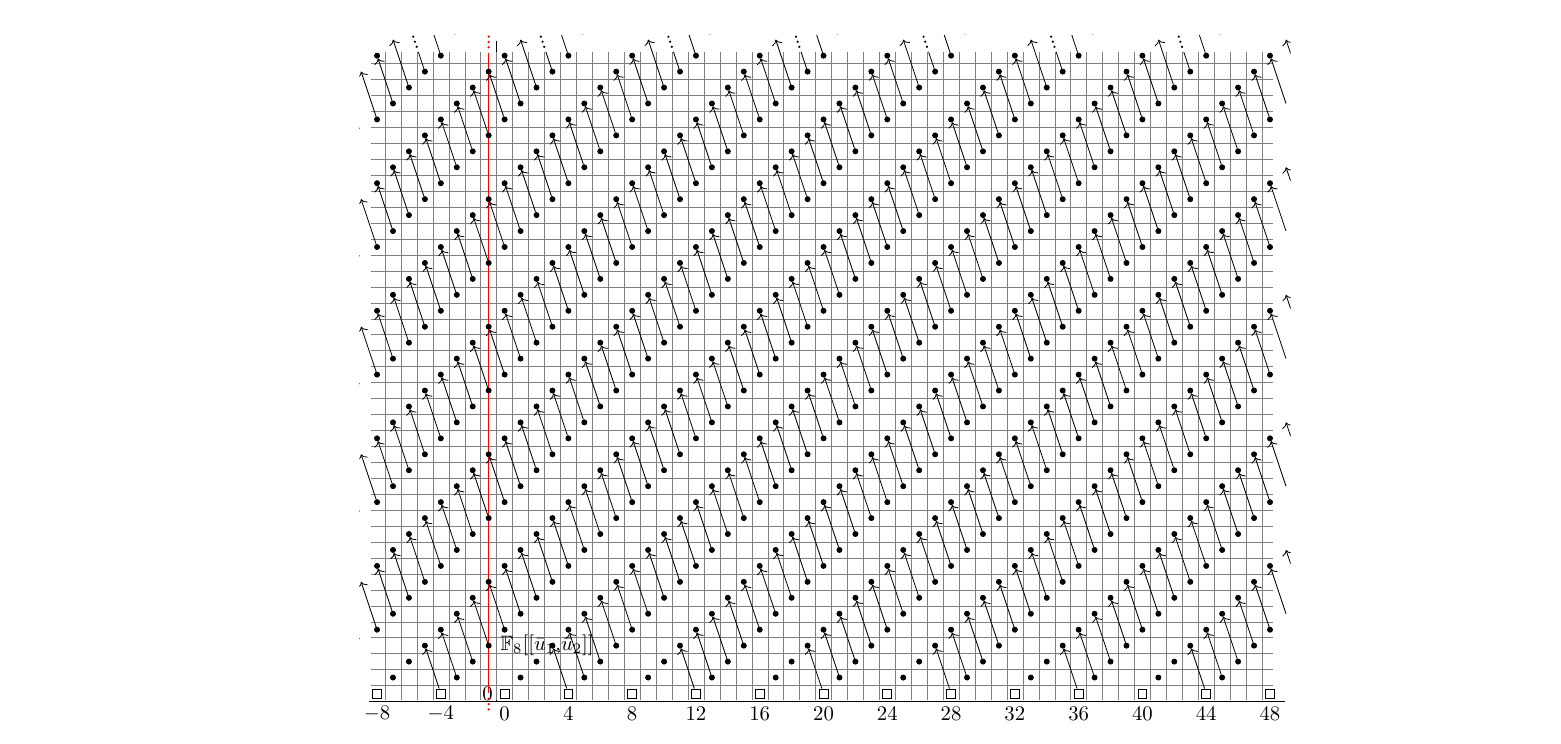}}
\end{center}
\begin{center}
\caption{$d_3$-differentials in the integer graded part of $C_2\text{-}\HFPSS(E_3)$.}
\label{fig:C2HFPSSEnE3}
\end{center}
\end{figure}

\begin{figure}
\begin{center}
\makebox[\textwidth]{\hspace{1in}\includegraphics[trim={1cm 7cm 0cm 2cm},clip,page = 2, scale = 1]{EnC2HFPSS}}
\end{center}
\begin{center}
\caption{$d_7$-differentials in the integer graded part of $C_2\text{-}\HFPSS(E_3)$.}
\label{fig:C2HFPSSEnE7}
\end{center}
\end{figure}

\begin{figure}
\begin{center}
\makebox[\textwidth]{\hspace{-0.7in}\includegraphics[trim={0cm 7cm 1.5cm 2cm},clip,page = 3, scale = 1]{EnC2HFPSS}}
\end{center}
\begin{center}
\caption{$d_{15}$-differentials in the integer graded part of $C_2\text{-}\HFPSS(E_3)$.}
\label{fig:C2HFPSSEnE15}
\end{center}
\end{figure}

\begin{figure}
\begin{center}
\makebox[\textwidth]{\hspace{1in}\includegraphics[trim={1cm 7cm 0cm 2cm},clip,page = 4, scale = 1]{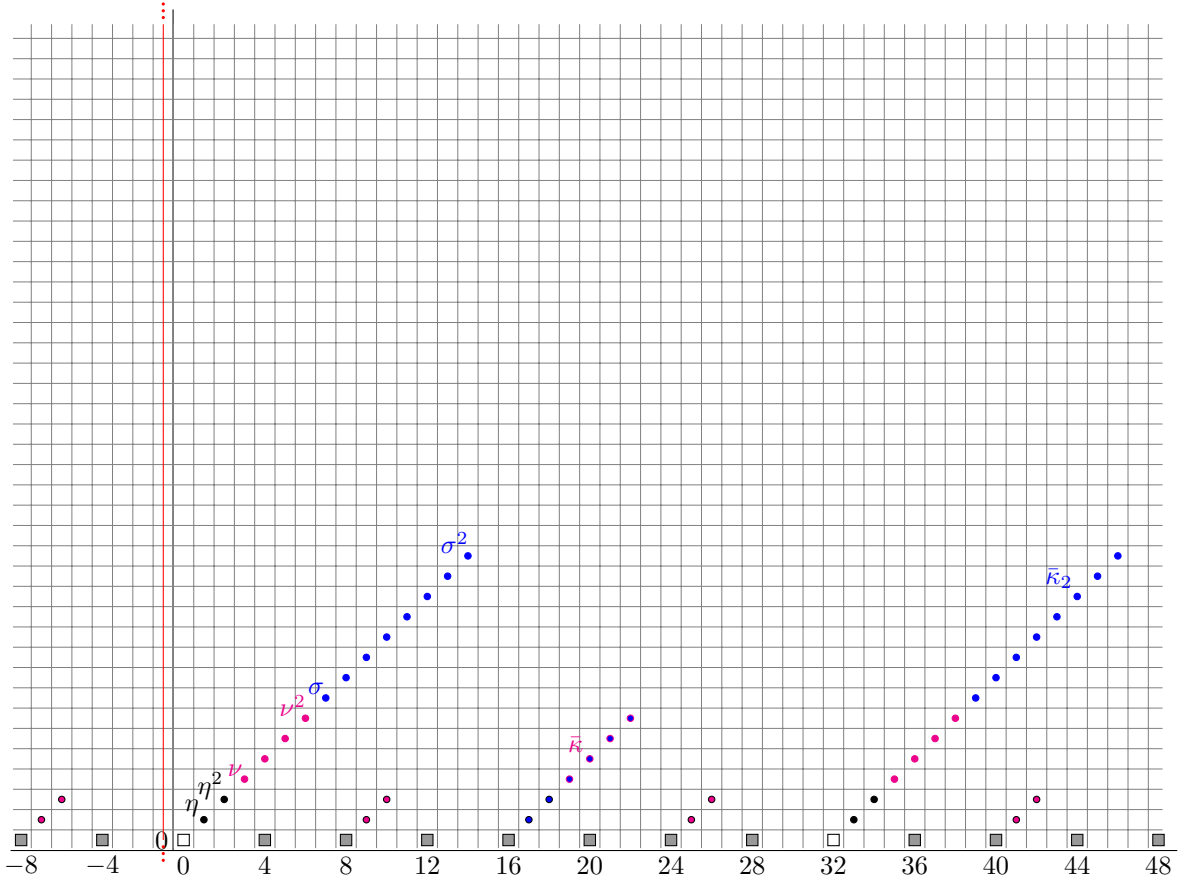}}
\end{center}
\begin{center}
\caption{$E_\infty$-page of the integer graded part of $C_2\text{-}\HFPSS(E_3)$.}
\hfill
\label{fig:C2HFPSSEnEinfty}
\end{center}
\end{figure}

\subsection{Real Landweber Exactness}
We will now use the $C_2$-homotopy fixed point spectral sequence of $E_n$ to show that $E_n$ is Real Landweber exact.  First, we will recall some definitions and theorems from \cite{HillMeier}. 

\begin{dfn}[\cite{Araki}]\rm
Let $E$ be a $C_2$-equivariant homotopy commutative ring spectrum.  A \textit{Real orientation} of $E$ is a class $\bar{x} \in \widetilde{E}_{C_2}^{\rho}(\mathbb{CP}^\infty)$ whose restriction to 
$$\widetilde{E}_{C_2}^{\rho} (\mathbb{CP}^1) = \widetilde{E}_{C_2}^{\rho}(S^{\rho}) \cong E^0_{C_2}(pt)$$
is the unit.  Here, we are viewing $\mathbb{CP}^n$ as a $C_2$-space via complex conjugation.  
\end{dfn}

By \cite[Theorem~2.25]{HuKriz}, Real orientations of $E$ are in one-to-one correspondence with homotopy commutative maps $MU_\mathbb{R} \to E$ of $C_2$-ring spectra.  

\begin{dfn} \rm (\cite[Definition~3.1]{HillMeier}). 
A $C_2$-spectrum $E\mathbb{R}$ is \textit{even} if $\underline{\pi}_{k\rho-1}E\mathbb{R} = 0$ for all $k \in \mathbb{Z}$.  It is called \textit{strongly even} if additionally $\underline{\pi}_{k\rho}E\mathbb{R}$ is a constant Mackey functor for all $k \in \mathbb{Z}$, i.e., if the restriction 
$$\pi_{k\rho}^{C_2}E\mathbb{R} \to \pi^e_{k\rho}E\mathbb{R} \cong \pi_{2k}^e E\mathbb{R}$$
is an isomorphism. 
\end{dfn}

Even spectra satisfy very nice properties.  In particular, Hill--Meier further proved (\cite[Lemma~3.3]{HillMeier}) that if a $C_2$-spectrum $E\mathbb{R}$ is even, then $E\mathbb{R}$ is Real orientable.  They proved this by showing that all the obstructions to having a Real orientation lie in the groups $\pi_{2k-1}E\mathbb{R}$ and $\pi_{k\rho-1}^{C_2}E\mathbb{R}$, which are all 0 by definition.  

\begin{dfn}\rm (\cite[Definition~3.5]{HillMeier}). 
Let $E\mathbb{R}$ be a strongly even $C_2$-spectrum with underlying spectrum $E$.  Then $E\mathbb{R}$ is called \textit{Real Landweber exact} if for every Real orientation $MU_\mathbb{R} \to E\mathbb{R}$ the induced map 
$${MU_\mathbb{R}}_\bigstar(X) \otimes_{MU_{2*}} E_{2*} \to E\mathbb{R}_{\bigstar}(X) $$
is an isomorphism for every $C_2$-spectrum $X$. 
\end{dfn}
Here, we are treating ${MU_\mathbb{R}}_\bigstar$ as a graded $MU_{2*}$-module because the restriction map $({MU_\mathbb{R}})_{k\rho} \to MU_{2k}$ is an isomorphism, and it defines a graded ring morphism $MU_{2*} \to {MU_\mathbb{R}}_\bigstar$ by sending elements of degree $2k$ to elements of degree $k\rho$.  

\begin{thm}[\cite{HillMeier}, Real Landweber exact functor theorem]\label{thm:HillMeierRealLEFT}
Let $E\mathbb{R}$ be a strongly even $C_2$-spectrum whose underlying spectrum $E$ is Landweber exact.  Then $E\mathbb{R}$ is Real Landweber exact. 
\end{thm}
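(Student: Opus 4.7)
The plan is to establish the natural map $\Phi_X: {MU_\mathbb{R}}_\bigstar(X) \otimes_{MU_{2*}} E_{2*} \to E\mathbb{R}_\bigstar(X)$ as an isomorphism by reducing the $C_2$-equivariant statement to the classical Landweber exactness of $E_{2*}$ over $MU_{2*}$. First I would verify that $\Phi_X$ exists as a natural transformation. By \cite[Lemma~3.3]{HillMeier} any strongly even (in particular, even) $C_2$-spectrum is Real orientable, so a map $MU_\mathbb{R} \to E\mathbb{R}$ exists; the strong evenness hypothesis simultaneously provides the restriction isomorphism $E\mathbb{R}_{k\rho}^{C_2} \cong E_{2k}$, which is what makes $E\mathbb{R}_\bigstar(X)$ a module over $E_{2*}$ along the $MU_{2*}$-algebra structure induced by $MU_\mathbb{R}$.

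Next I would check that the left-hand side defines an $RO(C_2)$-graded reduced homology theory on $C_2$-spectra. The wedge axiom is formal since smash products and tensor products both commute with direct sums. For exactness, I would leverage Hu--Kriz's computation expressing ${MU_\mathbb{R}}_\bigstar$ as a polynomial ring over $MU_{2*}$ generated by the equivariant classes $\bar{u}^{\pm}, u_{2\sigma}^\pm, a_\sigma$, so that each ${MU_\mathbb{R}}_\bigstar(X)$ is built out of shifts of classical $MU_{2*}$-modules of the kind covered by Landweber exactness; tensoring preserves the resulting long exact sequences.

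With both sides being $RO(C_2)$-graded homology theories, it suffices to show $\Phi_X$ is an iso on a set of generators of the category of $C_2$-spectra, namely on the two orbit types $X = C_2/e_+$ and $X = S^0$. The general case then follows by the usual 5-lemma argument on cofiber sequences and passage to filtered colimits. On $X = C_2/e_+$ both sides reduce (via the forgetful map) to their non-equivariant counterparts and $\Phi_X$ becomes the classical Landweber comparison $MU_*(S^0) \otimes_{MU_*} E_* \to E_*(S^0)$, which is an isomorphism by the hypothesis that $E$ is Landweber exact.

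The hard part is verifying $\Phi_{S^0}$ is an isomorphism on coefficients, where the full $RO(C_2)$-grading comes into play. The plan is to compare the $C_2$-homotopy fixed point spectral sequences $C_2\text{-}\HFPSS(MU_\mathbb{R}) \to C_2\text{-}\HFPSS(E\mathbb{R})$ induced by the Real orientation, in the same spirit as the proof of Theorem~\ref{FullVersion2}. The point is that tensoring the $E_2$-page $H^*(C_2; \pi_\bigstar MU_\mathbb{R})$ by $E_{2*}$ over $MU_{2*}$ computes $H^*(C_2; \pi_\bigstar E\mathbb{R})$ precisely because the strongly even hypothesis forces $\pi_{k\rho}^{C_2}(E\mathbb{R})$ to be flat over $\pi_{k\rho}^{C_2}(MU_\mathbb{R})$ with the classical Landweber behavior, and the differentials match by naturality. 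The main obstacle is ruling out hidden multiplicative extensions on the $E_\infty$-page: the strongly-even hypothesis is exactly what guarantees that ``extra'' equivariant classes (powers of $a_\sigma$, $u_{2\sigma}$, etc.) behave identically on both sides, so no such extensions arise, and $\Phi_{S^0}$ is an iso on abutments as desired.
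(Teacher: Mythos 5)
The paper does not prove this theorem: it is stated with the attribution \cite{HillMeier} and used as a black box to conclude that $E_n$ is Real Landweber exact. So there is no ``paper's own proof'' to compare your attempt against, and the proof, if you want it, lives in Hill--Meier's paper.

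Taken on its own terms, your sketch has a few genuine gaps. First, the description of ${MU_\mathbb{R}}_\bigstar$ as a polynomial ring over $MU_{2*}$ on $\bar u^\pm, u_{2\sigma}^\pm, a_\sigma$ is not what Hu--Kriz compute: the genuine $RO(C_2)$-graded coefficient ring of $MU_\mathbb{R}$ contains a large amount of $a_\sigma$-torsion and ``negative cone'' classes, and $u_{2\sigma}$ is not a unit (nor even a class) in $\pi_\bigstar^{C_2}MU_\mathbb{R}$ --- it is a class on the $E_2$-page of the homotopy fixed point spectral sequence that supports differentials. Because of this, the assertion that ${MU_\mathbb{R}}_\bigstar(X)$ is ``built out of shifts of classical $MU_{2*}$-modules of the kind covered by Landweber exactness'' is precisely the nontrivial point, not a consequence of Hu--Kriz; establishing the needed Tor-vanishing (which Landweber exactness only gives against $MU_*MU$-comodules of a suitable type) requires identifying the relevant subquotients of ${MU_\mathbb{R}}_\bigstar(X)$ as such, and this is where Hill--Meier have to do real work. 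Second, the homotopy fixed point spectral sequence converges to Borel-equivariant homotopy, whereas the statement concerns genuine $C_2$-equivariant homology $E\mathbb{R}_\bigstar(X)$. The coincidence of genuine and Borel fixed points is a special feature of $MU_\mathbb{R}$ (and of $K(n)$-local spectra like $E_n$), not of an arbitrary strongly even $E\mathbb{R}$, so your argument for $\Phi_{S^0}$ via HFPSS comparison does not address the quantity the theorem is actually about. Finally, even if one worked Borel-equivariantly, the claimed interchange of $H^*(C_2;-)$ with $-\otimes_{MU_{2*}}E_{2*}$ again needs a Tor-vanishing that strong evenness alone does not supply.
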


For $E_n$, its underlying spectrum is clearly Landweber exact.  In light of Theorem~\ref{thm:HillMeierRealLEFT}, we prove the following: 

\begin{thm}
$E_n$ is a Real Landweber exact spectrum.  
\end{thm}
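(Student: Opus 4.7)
The plan is to apply Hill--Meier's Real Landweber exact functor theorem (Theorem~\ref{thm:HillMeierRealLEFT}). Since the underlying spectrum of $E_n$ is Landweber exact by its construction as a Lubin--Tate theory, the task reduces to verifying that $E_n$ is strongly even. The vanishing $\pi_{2k-1}E_n = 0$ is immediate, so what remains is to check, for every $k \in \mathbb{Z}$, that (i) $\pi_{k\rho-1}^{C_2}E_n = 0$ and (ii) the restriction $\pi_{k\rho}^{C_2}E_n \to \pi_{2k}E_n$ is an isomorphism. Both will be read off from the $RO(C_2)$-graded homotopy fixed point spectral sequence computed in Theorem~\ref{FullVersion2}.

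With the bidegrees $|\bar{u}|=\rho$, $|\bar{u}_i|=0$, $|u_{2\sigma}| = 2-2\sigma$, and $|a_\sigma|=-\sigma$, a monomial $\bar{u}^a p(\bar{u}_1,\ldots,\bar{u}_{n-1}) u_{2\sigma}^b a_\sigma^c$ on the $E_2$-page sits in $RO(C_2)$-degree $(a+2b) + (a-2b-c)\sigma$. Setting this equal to $k\rho$ forces $c = -4b$ with $b\le 0$, and setting it equal to $k\rho-1$ forces $c = -1-4b\ge 3$ with $b\le -1$. Consequently, the filtration-zero piece of the $E_2$-page in degree $k\rho$ is precisely $\bar{u}^k \cdot W(\mathbb{F}_{2^n})[[\bar{u}_1,\ldots,\bar{u}_{n-1}]]$---whose elements are permanent cycles by Theorem~\ref{FullVersion2}, and map isomorphically onto $\pi_{2k}E_n$ via $\bar{u}\mapsto u$ and $\bar{u}_i\mapsto u_i$ under the edge homomorphism identified with the restriction---while the filtration-zero piece in degree $k\rho-1$ is empty. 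So (i) and (ii) both reduce to the claim that every $E_2$-class in degrees $k\rho$ or $k\rho-1$ carrying a positive power of $a_\sigma$ is killed at some finite page.

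This vanishing is the main technical step and the core obstacle. Because each $\bar{u}_i$, $\bar{u}^{\pm}$, and $a_\sigma$ is a permanent cycle, every $d_r$ is a $(\bar{u}_i, \bar{u}^{\pm}, a_\sigma)$-linear derivation, so the calculation reduces to tracking monomials in $u_{2\sigma}$ and $a_\sigma$. The differentials $d_{2^{k+1}-1}(u_{2\sigma}^{2^{k-1}}) = \bar{u}_k\bar{u}^{2^k-1}a_\sigma^{2^{k+1}-1}$ for $1\le k \le n-1$, together with $d_{2^{n+1}-1}(u_{2\sigma}^{2^{n-1}}) = \bar{u}^{2^n-1}a_\sigma^{2^{n+1}-1}$, successively truncate the $a_\sigma$-towers above each $u_{2\sigma}$-power: by induction on $k$, the monomials $u_{2\sigma}^b$ with $b$ not divisible by $2^k$ are eliminated by the $E_{2^{k+1}}$-page, and the remaining potentially surviving classes $\bar{u}^a p(\bar{u}_i) u_{2\sigma}^{-2^n m} a_\sigma^c$ with $m\ge 1$ in our target degrees carry $c\ge 2^{n+1}-1$ and lie in the image of $d_{2^{n+1}-1}$ applied to a suitable odd multiple of $u_{2\sigma}^{2^{n-1}}$. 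This is precisely the truncation pattern Hu--Kriz exhibit for $\HFPSS(BP_\mathbb{R})$, and can be transported directly via the map of spectral sequences $\HFPSS(BP_\mathbb{R}) \to \HFPSS(E_n)$ induced by the Real orientation, with $\bar{u}_k\bar{u}^{2^k-1}$ and $\bar{u}^{2^n-1}$ playing the roles of $\bar{v}_k$ and $\bar{v}_n$ respectively. Once (i) and (ii) are verified, Theorem~\ref{thm:HillMeierRealLEFT} immediately yields the Real Landweber exactness of $E_n$.
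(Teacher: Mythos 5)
Your proof is correct and follows the same route as the paper: apply Hill--Meier's Real Landweber exact functor theorem after verifying strong evenness via the $RO(C_2)$-graded homotopy fixed point spectral sequence of Theorem~\ref{FullVersion2}. The only cosmetic difference is that you handle the $k\rho$ and $k\rho-1$ degrees uniformly through the monomial bookkeeping $c=-4b$ and $c=-1-4b$, whereas the paper asserts the $k\rho$ case directly and isolates the $k\rho-1$ case as a lemma, reducing to $\pi_{-1}^{C_2}E_n=0$ via $\bar{u}$-periodicity before running the same truncation-of-$a_\sigma$-towers argument.
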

\begin{proof}
By Theorem~\ref{thm:HillMeierRealLEFT}, it suffices to show that $E_n$ is strongly even.  By Thereom~\ref{FullVersion2}, the classes $\bar{u}_1$, $\ldots$, $\bar{u}_{n-1}$, and $\bar{u}^\pm$ are permanent cycles in $C_2\text{-}\HFPSS(E_n)$.  The restriction of these classes to $\pi_{2*}^eE_n$ are $u_1$, $\ldots$, $u_{n-1}$, and $u^\pm$, respectively.  Furthermore, there are no other classes in $\pi_{*\rho}^{C_2} E_n$.  This shows that the restriction map 
$$\pi_{*\rho}^{C_2} E_n \to \pi_{2*}^e E_n$$
is an isomorphism, hence $\underline{\pi}_{k\rho} E_n$ is a constant Mackey functor for all $k \in \mathbb{Z}$.  

Classically, we already know that $\pi_{2k-1}^e E_n = 0$.  The following lemma shows that $\underline{\pi}_{k\rho -1}E_n = 0$ for all $k \in \mathbb{Z}$.
\end{proof}

\begin{lem}
The groups $\pi_{k\rho-1}^{C_2} E_n = 0$ for all $k \in \mathbb{Z}$.
\end{lem}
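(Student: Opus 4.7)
The plan is to read the desired vanishing directly off the $E_\infty$-page of the $RO(C_2)$-graded homotopy fixed point spectral sequence computed in Theorem~\ref{FullVersion2}.  Since this spectral sequence converges strongly to $\pi^{C_2}_\bigstar E_n$, it suffices to show that its $E_\infty$-page is zero in degree $k\rho - 1$.

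First, I would parametrize the $E_2$-classes in degree $k\rho - 1 = (k-1) + k\sigma$.  A typical monomial on the $E_2$-page has the form $P \cdot \bar{u}^a u_{2\sigma}^b a_\sigma^c$ with $P \in W(\mathbb{F}_{2^n})[[\bar{u}_1, \ldots, \bar{u}_{n-1}]]$, $a, b \in \mathbb{Z}$, and $c \geq 0$.  Its total $RO(C_2)$-degree is $(a + 2b) + (a - 2b - c)\sigma$, and matching with $(k-1) + k\sigma$ forces $a + 2b = k - 1$ and $c = -1 - 4b$; the condition $c \geq 0$ then forces $b \leq -1$ and $c \geq 3$ odd.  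In particular every $E_2$-class in this degree carries a positive power of $a_\sigma$ and is therefore $2$-torsion.

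I would then split into cases on the $2$-adic valuation of $b$.  When $v_2(b) \leq n-1$, let $j := v_2(b) + 1 \leq n$.  Applying the Leibniz rule to the differential
\[
d_{2^{j+1}-1}(u_{2\sigma}^{2^{j-1}}) = \bar{u}_j \, \bar{u}^{2^j - 1} \, a_\sigma^{2^{j+1} - 1}
\]
from Theorem~\ref{FullVersion2} (with the convention $\bar{u}_n := 1$ in the boundary case $j = n$) shows that our class $P \bar{u}^a u_{2\sigma}^b a_\sigma^c$ supports a nonzero $d_{2^{j+1}-1}$.  When $v_2(b) \geq n$, the exponent $b$ is a negative multiple of $2^n$, forcing $|b| \geq 2^n$ and $c \geq 2^{n+2} - 1$; in this regime I would exhibit the class as the image of
\[
X := u_{2\sigma}^{b + 2^{n-1}} \cdot P \cdot \bar{u}^{a - (2^n - 1)} \cdot a_\sigma^{c - (2^{n+1} - 1)}
\]
under $d_{2^{n+1}-1}$.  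Since $v_2(b + 2^{n-1}) = n - 1$, the element $X$ is automatically a $d_r$-cycle for all $r < 2^{n+1} - 1$, and the Leibniz computation of $d_{2^{n+1}-1}(X)$ recovers the original class.

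The main obstacle is verifying that the targets produced in the first case and the source $X$ in the second case are genuinely nonzero on the relevant $E_r$-page, i.e., that they have not already been eliminated by an earlier differential.  Since every differential target recorded in Theorem~\ref{FullVersion2} has the very specific form $\bar{u}_i \bar{u}^{2^i - 1} a_\sigma^{2^{i+1} - 1}$ multiplied by a permanent cycle, this verification reduces to a bounded combinatorial check on the $2$-adic valuations of the exponents, which should be routine once the cases are organized by increasing page number.
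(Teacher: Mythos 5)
Your proposal follows the same strategy as the paper: degree analysis shows every $E_2$-class in stem $k\rho - 1$ carries a positive power of $a_\sigma$ (hence is $2$-torsion), and one then argues that each such class is eliminated by examining the $2$-adic valuation of the $u_{2\sigma}$-exponent. The paper first uses the periodicity isomorphism given by multiplication by $\bar{u}^{k}$ to reduce to $k = 0$ --- a cosmetic simplification --- and then works page by page, which is what makes the deferred check go through: after the $d_{2^{r+1}-1}$-differentials, what survives at a position indexed by $\ell$ with $v_2(\ell) \geq r$ is exactly $\mathbb{F}_{2^n}[[\bar{u}_{r+1},\ldots,\bar{u}_{n-1}]]$ times the monomial $\bar{u}^{2\ell-1}u_{2\sigma}^{-\ell}a_\sigma^{4\ell-1}$. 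That ideal-tracking is precisely what you call ``a bounded combinatorial check,'' and it is the body of the proof rather than an afterthought. As stated, your proposal also contains a small imprecision: when $v_2(b) \leq n-1$ and $j := v_2(b)+1$, the class $P\bar{u}^a u_{2\sigma}^b a_\sigma^c$ does \emph{not} always support a nonzero $d_{2^{j+1}-1}$ --- if $P$ lies in the ideal $(\bar{u}_1,\ldots,\bar{u}_{j-1})$, the class was already hit by an earlier $d_{2^{i+1}-1}$ (from a stem-$0$ source) and is zero before page $2^{j+1}-1$. Likewise, your element $X$ in the second case can itself be eliminated by an earlier differential when some $\bar{u}_i \mid P$ with $i < n$, in which case $d_{2^{n+1}-1}(X)$ is no longer meaningful and one must instead note that the original class was killed by that same earlier differential. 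Neither gap affects the conclusion (dead is dead), but the dichotomy ``supports versus receives a differential'' is governed by divisibility of $P$ in addition to $v_2(b)$, and the paper's page-by-page organization is what resolves this cleanly.
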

\begin{proof}
In $C_2\text{-}\HFPSS(E_n)$, the classes $\bar{u}^\pm$ are permanent cycles.  Since $|\bar{u}| = \rho$, multiplying by $\bar{u}^k$ produces an isomorphism
$$\pi_{\bigstar}^{C_2}E_n \stackrel{\cong}{\to} \pi_{\bigstar+k\rho}^{C_2} E_n.$$
It follows that in order to show $\pi_{k\rho-1}^{C_2}E_n = 0$ for $k \in \mathbb{Z}$, it suffices to prove $\pi_{-1}^{C_2}E_n = 0$.

Recall that the $E_2$-page of $C_2\text{-}\HFPSS(E_n)$ is 
$$E_2^{s, t} (E_n^{hC_2}) = W(\mathbb{F}_{2^n})[[\bar{u}_1, \ldots, \bar{u}_{n-1}]][\bar{u}^\pm] \otimes \mathbb{Z}[u_{2\sigma}^\pm, a_\sigma]/(2a_\sigma).$$
As in Figure~\ref{fig:C2HFPSSEnEinfty}, every class on the 0-line is of the form 
$$W(\mathbb{F}_{2^n})[[\bar{u}_1, \ldots, \bar{u}_{n-1}]]\bar{u}^au_{2\sigma}^b,$$
where $a, b \in \mathbb{Z}$, and every class of filtration greater than 0 is of the form 
$$\mathbb{F}_{2^n}[[\bar{u}_1, \ldots, \bar{u}_{n-1}]]\bar{u}^au_{2\sigma}^ba_\sigma^c,$$
where $a, b \in \mathbb{Z}$, and $c >0$.  For degree reasons, the classes on the $(-1)$-stem are all of the form 
$$\mathbb{F}_{2^n}[[\bar{u}_1, \ldots, \bar{u}_{n-1}]]\bar{u}^{2\ell -1} u_{2\sigma}^{-\ell}a_\sigma^{4\ell -1},$$
where $\ell \geq 1$.  The relevant differentials that have source or target in the $(-1)$-stem are all generated by
$$d_{2^{r+1}-1}(u_{2\sigma}^{-2^{r-1}}) = d_{2^{r+1}-1}(u_{2\sigma}^{-2^r}\cdot u_{2\sigma}^{2^{r-1}}) = u_{2\sigma}^{-2^r} \cdot d_{2^{r+1}-1}(u_{2\sigma}^{2^{r-1}}) = \left\{
\begin{array}{ll} 
\bar{u}_{r}\bar{u}^{2^{r}-1} u_{2\sigma}^{-2^r} a_\sigma^{2^{r+1}-1}  & 0< r < n \\ 
\bar{u}^{2^n-1}u_{2\sigma}^{-2^n}a_\sigma^{2^{n+1}-1} & r = n.
\end{array}
 \right. $$
We will analyze these differentials one-by-one: \\

\noindent (1) The relevant $d_3$-differentials are all generated by the differential 
$$d_3(u_{2\sigma}^{-1}) = \bar{u}_1\bar{u}u_{2\sigma}^{-2}a_\sigma^3.$$
The classes at $\bar{u}^{2\ell-1}u_{2\sigma}^{-\ell}a_\sigma^{4\ell -1}$, with $\ell \equiv 1 \pmod{2}$, are the sources of these differentials, and hence they die after the $E_3$-page.  The classes at $\bar{u}^{2\ell-1}u_{2\sigma}^{-\ell}a_\sigma^{4\ell -1}$, with $\ell \equiv 0 \pmod{2}$, are the targets.  These differentials quotient out the principal ideal $(\bar{u}_1)$ at these targets.  The remaining classes at these targets are of the form 
$$\mathbb{F}_{2^n}[[\bar{u}_2, \ldots, \bar{u}_{n-1}]]\bar{u}^{2\ell -1} u_{2\sigma}^{-\ell}a_\sigma^{4\ell -1},$$
with $\ell \equiv 0 \pmod{2}$.  \\

\noindent (2) The relevant $d_{7}$-differentials are all generated by the differential 
$$d_{7}(u_{2\sigma}^{-2}) = \bar{u}_{2}\bar{u}^{3} u_{2\sigma}^{-4} a_\sigma^{7}.$$
The classes at $\bar{u}^{2\ell-1}u_{2\sigma}^{-\ell}a_\sigma^{4\ell -1}$, with $\ell \equiv 2 \pmod{4}$, are the sources of these differentials, and hence they die after the $E_{7}$-page.  The classes at $\bar{u}^{2\ell-1}u_{2\sigma}^{-\ell}a_\sigma^{4\ell -1}$, with $\ell \equiv 0 \pmod{4}$, are the targets.  These differentials quotient out the principal ideal $(\bar{u}_2)$ at these targets.  The remaining classes at these targets are of the form 
$$\mathbb{F}_{2^n}[[\bar{u}_3, \ldots, \bar{u}_{n-1}]]\bar{u}^{2\ell -1} u_{2\sigma}^{-\ell}a_\sigma^{4\ell -1},$$
with $\ell \equiv 0 \pmod{4}$.  \\

\noindent (3) In general, for $0 < r < n$, the relevant $d_{2^{r+1}-1}$-differentials are all generated by the differential 
$$d_{2^{r+1}-1}(u_{2\sigma}^{-2^{r-1}}) = \bar{u}_{r}\bar{u}^{2^{r}-1} u_{2\sigma}^{-2^r} a_\sigma^{2^{r+1}-1}.$$
The classes at $\bar{u}^{2\ell-1}u_{2\sigma}^{-\ell}a_\sigma^{4\ell -1}$, with $\ell \equiv 2^{r-1} \pmod{2^r}$, are the sources of these differentials, and hence they die after the $E_{2^{r+1}-1}$-page.  The classes at $\bar{u}^{2\ell-1}u_{2\sigma}^{-\ell}a_\sigma^{4\ell -1}$, with $\ell \equiv 0 \pmod{2^r}$, are the targets.  These differentials quotient out the principal ideal $(\bar{u}_r)$ at these targets.  The remaining classes at these targets are of the form 
$$\mathbb{F}_{2^n}[[\bar{u}_{r+1}, \ldots, \bar{u}_{n-1}]]\bar{u}^{2\ell -1} u_{2\sigma}^{-\ell}a_\sigma^{4\ell -1},$$
with $\ell \equiv 0 \pmod{2^r}$.  \\

\noindent (4) The relevant $d_{2^{n+1}-1}$-differentials are all generated by the differential 
$$d_{2^{n+1}-1}(u_{2\sigma}^{-2^{n-1}}) = \bar{u}^{2^n-1}u_{2\sigma}^{-2^n}a_\sigma^{2^{n+1}-1}.$$
The classes at $\bar{u}^{2\ell-1}u_{2\sigma}^{-\ell}a_\sigma^{4\ell -1}$, with $\ell \equiv 2^{n-1} \pmod{2^n}$, are the sources of these differentials, and hence they die after the $E_{2^{n+1}-1}$-page.  The classes at $\bar{u}^{2\ell-1}u_{2\sigma}^{-\ell}a_\sigma^{4\ell -1}$, with $\ell \equiv 0 \pmod{2^n}$, are the targets.  They also die after these differentials because the only classes at these targets now are $\bar{u}^{2\ell -1} u_{2\sigma}^{-\ell}a_\sigma^{4\ell -1}$.\\

It follows that every class at the $(-1)$-stem vanish after the $E_{2^{n+1}-1}$-page.  This implies $\pi_{-1}^{C_2} E_n = 0$, as desired.  

\end{proof}

\section{Hurewicz Images}\label{sec:HurewiczImages}

In this section, we will prove that $\pi_* E_n^{hC_2}$ detects the Hopf elements, the Kervaire classes, and the $\bar{\kappa}$-family.  The case when $n=1$ and $n=2$ are previously known.  When $n =1$, $E_1 = KU^{\wedge}_2$ and $E_1^{hC_2} = KO^{\wedge}_2$.  It is well-known that $\pi_*KO^{\wedge}_2$ detects $\eta \in \pi_1 \mathbb{S}$ and $\eta^2 \in \pi_2 \mathbb{S}$ (\cite{AtiyahKR}). When $n = 2$, the Mahowald--Rezk transfer argument (\cite{MahowaldRezk}) shows that $\pi_*E_2^{hC_2}$ detects $\eta$, $\eta^2$, $\nu \in \pi_3 \mathbb{S}$, $\nu^2 \in \pi_6 \mathbb{S}$, and $\bar{\kappa} \in \pi_{20}\mathbb{S}$.

\vspace{\baselineskip}


The Hopf elements are represented by the elements 
$$h_i \in \text{Ext}_{\mathcal{A}_*}^{1, 2^i}(\mathbb{F}_2, \mathbb{F}_2)$$ 
on the $E_2$-page of the classical Adams spectral sequence at the prime 2.  By Adam's solution of the Hopf invariant one problem \cite{AdamsHopfInvariant}, only $h_0$, $h_1$, $h_2$, and $h_3$ survive to the $E_\infty$-page.  By Browder's work \cite{Browder}, the Kervaire classes $\theta_j \in \pi_{2^{j+1}-2}\mathbb{S}$, if they exist, are represented by the elements 
$$h_j^2 \in \text{Ext}_{\mathcal{A}_*}^{2, 2^{j+1}}(\mathbb{F}_2, \mathbb{F}_2)$$
on the $E_2$-page.  For $j \leq 5$, $h_j^2$ survive.  The case $\theta_4 \in \pi_{30} \mathbb{S}$ is due to Barratt--Mahowald--Tangora \cite{MahowaldTangoraTheta4, BarratMahowaldTangoraTheta4}, and the case $\theta_5 \in \pi_{62} \mathbb{S}$ is due to Barratt--Jones--Mahowald \cite{BarrattJonesMahowaldTheta5}. The fate of $h_6^2$ is unknown.  Hill--Hopkins--Ravenel's result \cite{HHR} shows that the $h_j^2$, for $j \geq 7$, do not survive to the $E_\infty$-page.  

To introduce the $\bar{\kappa}$-family, we appeal to Lin's complete classification of $\Ext^{\leq 4, t}_{\mathcal{A}_*}(\F, \F)$ in \cite{LinExt}.  In his classification, Lin showed that there is a family $\{g_k \,| \, k \geq 1\}$ of indecomposable elements with 
$$g_k \in \Ext^{4, 2^{k+2} + 2^{k+3}}_{\mathcal{A}_*}(\F, \F).$$
The first element of this family, $g_1$, is in bidegree $(4, 24)$.  It survives the Adams spectral sequence to become $\bar{\kappa} \in \pi_{20} \mathbb{S}$.  It is for this reason that we name this family the $\bar{\kappa}$-family.  The element $g_2$ also survives to become the element $\bar{\kappa}_2 \in \pi_{44}\mathbb{S}$.  For $k \geq 3$, the fate of $g_k$ is unknown.  \\

In \cite{HurewiczImages}, the second author, together with Li, Wang, and Xu, proved detection theorems for the Hurewicz images of $MU_\mathbb{R}^{C_2} \approx MU_\mathbb{R}^{hC_2}$ and $BP_\mathbb{R}^{C_2} \approx BP_\mathbb{R}^{hC_2}$ (the equivalences between the $C_2$-fixed points and the $C_2$-homotopy fixed points for $MU_\mathbb{R}$ and $BP_\mathbb{R}$ are due to Hu and Kriz \cite[Theorem~4.1]{HuKriz}).  

\begin{thm}\label{thm:MURBPRDetection} (Li--Shi--Wang--Xu, Detection Theorems for $MU_\mathbb{R}$ and $BP_\mathbb{R}$). 
The Hopf elements, the Kervaire classes, and the $\bar{\kappa}$-family are detected by the Hurewicz maps $\pi_*\mathbb{S} \to \pi_* MU_\mathbb{R}^{C_2}\cong \pi_* MU_\mathbb{R}^{hC_2}$ and $\pi_*\mathbb{S} \to \pi_* BP_\mathbb{R}^{C_2} \cong \pi_* BP_\mathbb{R}^{hC_2}$.  
\end{thm}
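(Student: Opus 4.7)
The plan is to combine Hu--Kriz's complete computation of the $C_2$-equivariant HFPSS for $MU_\mathbb{R}$ and $BP_\mathbb{R}$ with a careful comparison between the classical Adams spectral sequence for $\mathbb{S}$ and this HFPSS. Since $MU_\mathbb{R}^{C_2} \simeq MU_\mathbb{R}^{hC_2}$ by Hu--Kriz, it suffices to work on the homotopy fixed point side throughout. For each element in the three families that survives the classical ASS, the goal is to exhibit an explicit permanent cycle in $\HFPSS(BP_\mathbb{R}^{hC_2})$ that realizes its Hurewicz image.

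First I would identify candidate detecting classes. Guided by the low-height cases --- where $\eta = h_1$ is detected by $a_\sigma$ in $KO$, and $\bar{\kappa} = g_1$ has a well-known detection in $TMF$ --- the natural candidates are built from $\bar{v}_k$, $a_\sigma$, and $\bar{u}$. For $h_i$ I would look for a class proportional to $\bar{v}_{i-1} a_\sigma^{2^i - 1}$ in the $(2^i-1)$-stem; for $h_j^2$ a class of the form $\bar{v}_{j-1}^2 a_\sigma^{2^{j+1}-2}$; and for $g_k$ a product involving $\bar{v}_k$ (and possibly auxiliary $\bar{v}$-factors) with appropriate powers of $a_\sigma$. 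Survival of each candidate at $E_\infty$ is then a degree-counting exercise using the Hu--Kriz differentials $d_{2^{k+1}-1}(u_{2\sigma}^{2^{k-1}}) = \bar{v}_k a_\sigma^{2^{k+1}-1}$ and their multiplicative consequences: one checks that no incoming or outgoing differential of the correct length can hit the candidate.

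Next I would show that the Hurewicz map $\pi_*\mathbb{S} \to \pi_*BP_\mathbb{R}^{hC_2}$ actually hits these candidates. The cleanest route is to use the $C_2$-equivariant Adams spectral sequence based on $\Hm$ as an intermediary: it receives a comparison map from the classical ASS via restriction, and the equivariant Thom reduction $BP_\mathbb{R} \to \Hm$ produces a map down to $\HFPSS(BP_\mathbb{R}^{hC_2})$. Cobar-level computations in the $C_2$-equivariant Steenrod algebra (for which Hu--Kriz provide the necessary structure) then identify the images of $h_i$, $h_j^2$, and $g_k$ on the $E_2$-page, and one verifies that the identification is preserved across the Thom reduction.

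The main obstacle is the filtration compatibility in this last step: the classical Adams filtration and the HFPSS filtration on $\pi_*BP_\mathbb{R}^{hC_2}$ do not agree on the nose, so the spectral-sequence comparison is not completely formal. The $\bar{\kappa}$-family is especially delicate, because $g_k$ lies in Adams filtration $4$ with cobar representatives involving Massey products, and tracking its image requires controlling the corresponding equivariant Massey products in the $C_2$-Steenrod algebra. A backup plan, if this direct comparison proves intractable for the higher $g_k$, is to use the slice spectral sequence for $MU_\mathbb{R}$ instead of the HFPSS, where the Hill--Hopkins--Ravenel vanishing lines and purely isotropic slice cells (available once the forthcoming slice-filtration analysis is in place) may make the tracking of Massey-product permanent cycles more manageable.
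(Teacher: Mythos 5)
Your overall plan follows the same general contour as the one sketched in the paper (citing Li--Shi--Wang--Xu): identify candidate permanent cycles in $\HFPSS(BP_\mathbb{R})$, then show the unit map hits them by routing through the $C_2$-equivariant Adams spectral sequence. But you stop exactly at the hard part. You correctly flag the ``filtration compatibility'' problem --- the classical Adams filtration and the HFPSS filtration on $\pi_*BP_\mathbb{R}^{hC_2}$ are not obviously comparable --- and you have no proposed resolution beyond vague cobar computations and a slice-SS backup. This is the entire content of the theorem, and a proof that leaves it open is not a proof.

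The paper's argument resolves this in two concrete steps you are missing. First, the $C_2$-equivariant Adams spectral sequence for $BP_\mathbb{R}$ (over $\Hm$) degenerates at its $E_2$-page for degree reasons, so the Hurewicz detection question becomes purely algebraic: Theorem~\ref{algebraicDetection}, which asks whether the images of $h_i$, $h_j^2$, $g_k$ in $\Ext_{\Lambda_\bigstar^{cc}}(H^c_\bigstar, H^c_\bigstar)$ are nonzero. Second --- and this is the step that dissolves your filtration worry --- one filters the Hopf algebroids $(\mathbb{F}_2,\mathcal{A}_*) \to (H^c_\bigstar,\mathcal{A}_\bigstar^{cc}) \to (H^c_\bigstar,\Lambda_\bigstar^{cc})$ compatibly to get a map of May spectral sequences, and Theorem~\ref{thm:AccHFPSS} identifies the $C_2$-equivariant May SS of $BP_\mathbb{R}$ with the associated-graded HFPSS of $BP_\mathbb{R}$ as $RO(C_2)$-graded spectral sequences. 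This isomorphism is precisely the bridge between Adams-world names and HFPSS-world names; without it, there is no principled way to assert that a class whose Ext-image you have pinned down at the cobar level is the same as a named permanent cycle in $\HFPSS(BP_\mathbb{R})$. Your Massey-product worry for $g_k$ is also handled this way: $g_k$ is represented in the May spectral sequence (as $h_{2k}^4$ in the paper's notation), not via an uncontrolled Massey product in the cobar complex.

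Two smaller problems. Your candidate detecting classes have the wrong $\bar{v}$-index: the images are $h_i \mapsto \bar{v}_i a_\sigma^{2^i - 1}$ and $h_j^2 \mapsto \bar{v}_j^2 a_\sigma^{2(2^j-1)}$ (Theorem~\ref{thm:E2MayImage}), not $\bar{v}_{i-1}$ and $\bar{v}_{j-1}$; the analogy $\eta \leftrightarrow a_\sigma$ at height $1$ already suggests $h_1 \mapsto \bar{v}_1 a_\sigma$ rather than $\bar{v}_0 a_\sigma$. And the slice-spectral-sequence backup is misdirected: the forthcoming slice analysis the paper mentions concerns $E_n$, not $MU_\mathbb{R}$, whose slice SS is already available from \cite{HHR}; moreover switching to the slice SS does not by itself touch the filtration-comparison issue, which is what actually needs to be solved.
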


Given the discussion above, Theorem~\ref{thm:MURBPRDetection} shows that the elements $\eta$, $\nu$, $\sigma$, and $\theta_j$, for $1 \leq j \leq 5$, are detected by $\pi_*^{C_2} MU_\mathbb{R}^{hC_2}$ and $\pi_*^{C_2} BP_\mathbb{R}^{hC_2}$. The last unknown Kervaire class $\theta_6$ and the classes $g_k$ for $k \geq 3$ will also be detected, should they survive the Adams spectral sequence. 

The proof of Theorem~\ref{thm:MURBPRDetection} requires the $C_2$-equivariant Adams spectral sequence developed by Greenlees \cite{GreenleesThesis,Greenlees1988, Greenlees1990} and Hu--Kriz \cite{HuKriz}.  Since $MU_\mathbb{R}$ splits as a wedge of suspensions of $BP_\mathbb{R}$ 2-locally, we only need to focus on $BP_\mathbb{R}$.  There is a map of Adams spectral sequences 
$$\begin{tikzcd} 
\text{classical Adams spectral sequence of } \mathbb{S} \ar[r, Rightarrow] \ar[d] & (\pi_*\mathbb{S})^\wedge_2 \ar[d] \\ 
C_2\text{-equivariant Adams spectral sequence of }\mathbb{S} \ar[r, Rightarrow] \ar[d] & (\pi_\bigstar^{C_2} F(E{C_2}_+, \mathbb{S}))^\wedge_2 \ar[d] \\ 
C_2\text{-equivariant Adams spectral sequence of } {BP_\mathbb{R}} \ar[r, Rightarrow] & (\pi_\bigstar^{C_2} F(E{C_2}_+,BP_\mathbb{R}))^\wedge_2.
\end{tikzcd}$$

It turns out that for degree reasons, the $C_2$-equivariant Adams spectral sequence for $BP_\mathbb{R}$ degenerates at the $E_2$-page.  From this, Theorem~\ref{thm:MURBPRDetection} follows easily from the following algebraic statement:
\begin{thm}[Li--Shi--Wang--Xu, Algebraic Detection Theorem]\label{algebraicDetection}
The images of the elements $\{h_i \,|\, i \geq 1\}$, $\{h_j^2 \,|\, j \geq 1\}$, and $\{g_k \,|\, k \geq 1\}$ on the $E_2$-page of the classical Adams spectral sequence of $\mathbb{S}$ are nonzero on the $E_2$-page of the $C_2$-equivariant Adams spectral sequence of $BP_\mathbb{R}$.  
\end{thm}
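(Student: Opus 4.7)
The plan is to compare the classical cobar complex computing $\Ext_{\mathcal{A}_*}(\F,\F)$ with a cobar complex computing the $E_2$-page of the $C_2$-equivariant Adams spectral sequence for $BP_\mathbb{R}$, and to produce explicit cocycle representatives for each family that are visibly nonzero on the equivariant side. Concretely, the classical dual Steenrod algebra $\mathcal{A}_* = \F[\xi_1,\xi_2,\ldots]$ has a $C_2$-equivariant analogue $\mathcal{A}_\bigstar^{C_2}$ (computed, for instance, by Hu--Kriz), with polynomial generators $\bar{\xi}_i$ in degree $(2^i-1)\rho$ together with the equivariant classes $a_\sigma$ and $u_{2\sigma}$; passing from $\mathbb{S}$ to $BP_\mathbb{R}$ kills the squares of the $\bar{\xi}_i$'s in the usual Quillen-idempotent pattern, leaving a cobar complex on the primitives $\bar{\tau}_i$ analogous to $\Ext_{BP_*BP}(BP_*,BP_*)$. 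The comparison map of Adams spectral sequences is induced by sending classes to their equivariant refinements, and I would isolate the images by tracking explicit cobar representatives.

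The first step would be to represent $h_i$ by the cocycle $[\bar{\xi}_1^{2^i}]$ (or equivalently $[\bar{\tau}_0^{2^i}]$ after the quotient) in the equivariant cobar complex. Because $\bar{\xi}_1$ has pure equivariant degree $\rho$, its powers are nonzero in the cobar complex, and the relevant degree $i\rho$ part of the underlying $E_2$-page for $BP_\mathbb{R}$ is detected by the restriction to the underlying classical ASS for $BP$, where $\bar{\xi}_1^{2^i}$ restricts to the generator $h_i$ of $\Ext^{1}_{BP_*BP}(BP_*,BP_*)$ that is known to be nonzero for all $i\geq 1$. The squares $h_j^2$ are then represented by $[\bar{\xi}_1^{2^j}|\bar{\xi}_1^{2^j}]$, and a May-style filtration argument (using the equivariant May filtration on $\mathcal{A}_\bigstar^{C_2}$ obtained by refining the classical May filtration degree-wise) shows these $2$-fold tensor products are nonzero on the associated graded, hence in $\Ext^2$ itself. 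The classical May spectral sequence computing $\Ext_{\mathcal{A}_*}(\F,\F)$ lifts to a $C_2$-equivariant May spectral sequence computing the $BP_\mathbb{R}$-Adams $E_2$-page, and the map between them is filtration preserving and surjective on the relevant polynomial generators.

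For the $\bar{\kappa}$-family, I would use Lin's explicit cobar representative for $g_k \in \Ext^{4,2^{k+2}+2^{k+3}}_{\mathcal{A}_*}(\F,\F)$, which is a specific $\F$-linear combination of $4$-fold monomials in the $\xi_i$'s with known leading term (in the May filtration) of the form $h_{2,k+1}^2$ or a related square of a May generator. Lifting this representative to the $C_2$-equivariant cobar complex on $\mathcal{A}_\bigstar^{C_2}$ (or its $BP_\mathbb{R}$-analogue) and running the $C_2$-equivariant May spectral sequence, the leading May term is a nonzero polynomial monomial in $\bar{h}_{2,k+1}$ (or the $\bar{\xi}_2^{2^k}$-type generator), which is not a boundary because its bidegree and May filtration forbid any differential from reaching it. Hence the image of $g_k$ on the equivariant $E_2$-page is nonzero.

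The main obstacle will be the $g_k$ case: one must verify that the specific $4$-fold linear combination defining $g_k$, which in the classical May spectral sequence has intricate indeterminacy and survives only because of delicate cancellations, still has the same leading May term on the equivariant side and that no new equivariant differential or May boundary (involving $a_\sigma$, $u_{2\sigma}$, or the $BP_\mathbb{R}$-relations) wipes it out. The strategy to handle this is to observe that the forgetful map from the $C_2$-equivariant May spectral sequence to the classical May spectral sequence is an isomorphism in the integer-graded, $a_\sigma = 0$ part, so the leading term of Lin's representative pulls back unchanged; meanwhile, any putative equivariant May boundary involving $a_\sigma$ would live in a strictly different $RO(C_2)$-degree from that of $g_k$, and the $BP_\mathbb{R}$-quotient only removes even powers of $\bar{\xi}_i$'s that do not appear in Lin's formula. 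Together, these observations promote nonvanishing of $h_i$, $h_j^2$, and $g_k$ from the classical to the equivariant $E_2$-page.
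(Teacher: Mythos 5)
Your overall route --- comparing classical and $C_2$-equivariant cobar complexes via Hu--Kriz's Borel $C_2$-equivariant dual Steenrod algebra and a compatible $C_2$-equivariant May filtration --- is exactly the route taken by Li--Shi--Wang--Xu in \cite{HurewiczImages}, which the present paper cites rather than proves; compare Theorems~\ref{thm:AccHFPSS} and~\ref{thm:E2MayImage}.  However your sketch contains concrete errors and is missing the ingredient that actually makes the argument close.

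First, the leading May term you propose for $g_k$ fails a bidegree check.  Since $g_k\in\Ext^{4,\,2^{k+2}+2^{k+3}}_{\mathcal{A}_*}(\F,\F)$ lives in homological degree $4$, a class $h_{2,k+1}^2$ in homological degree $2$ cannot be its leading May term; the correct class is $h_{2,k}^4$ (equivalently $b_{2,k}^2$), whose internal degree $4\cdot 3\cdot 2^k = 2^{k+2}+2^{k+3}$ matches.  You also write ``$\bar{h}_{2,k+1}$ (or the $\bar{\xi}_2^{2^k}$-type generator)'' as if these were the same, but $\bar{\xi}_2^{2^k}$ corresponds to the May generator with second index $k$, not $k+1$.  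Similarly, the image of $h_i$ is not represented by the cobar class $[\bar{\xi}_1^{2^i}]$ alone, which has a nonzero $\sigma$-component in its $RO(C_2)$-degree: a power of $a_\sigma$ must be supplied, and Theorem~\ref{thm:E2MayImage} records the image as $\bar{v}_i a_\sigma^{2^i-1}$ (likewise $h_j^2\mapsto\bar{v}_j^2 a_\sigma^{2(2^j-1)}$ and $g_k\mapsto \bar{v}_{k+1}^4 u_{2\sigma}^{2^{k+1}} a_\sigma^{4(2^k-1)}$).

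The genuine gap is the survival argument.  You assert that the leading May term of $g_k$ ``is not a boundary because its bidegree and May filtration forbid any differential from reaching it,'' but that is not a proof: in the classical May spectral sequence for the sphere, $h_{2,k}^4$ participates in nontrivial differentials, and $g_k$ survives only because of intricate cancellations.  What makes the argument of Li--Shi--Wang--Xu work is Theorem~\ref{thm:AccHFPSS}, which identifies the $C_2$-equivariant May spectral sequence of $BP_\mathbb{R}$ with the associated-graded $RO(C_2)$-graded homotopy fixed point spectral sequence of $BP_\mathbb{R}$, whose differentials Hu and Kriz computed in full.  With that identification one verifies directly that $\bar{v}_i a_\sigma^{2^i-1}$, $\bar{v}_j^2 a_\sigma^{2(2^j-1)}$, and $\bar{v}_{k+1}^4 u_{2\sigma}^{2^{k+1}} a_\sigma^{4(2^k-1)}$ survive.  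Your proposal needs either to invoke this identification or to provide some other control on the equivariant May differentials before the conclusion for the $g_k$-family can be drawn.
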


The proof of Theorem~\ref{algebraicDetection} requires an analysis of the algebraic maps 
$$\text{Ext}_{\mathcal{A}_*}(\mathbb{F}_2, \mathbb{F}_2) \to \text{Ext}_{\mathcal{A}_\bigstar^{cc}}(H^c_\bigstar, H^c_\bigstar) \to \text{Ext}_{\Lambda_\bigstar^{cc}}(H^c_\bigstar, H^c_\bigstar).$$
These are the maps on the $E_2$-pages of the Adams spectral sequences above.  Here, $\mathcal{A}_*:= (H\mathbb{F}_2 \wedge H\mathbb{F}_2)_*$ is the classical dual Steenrod algebra; $H^c_\bigstar := F(E{C_2}_+, H\mathbb{F}_2)_\bigstar$ is the Borel $C_2$-equivariant Eilenberg--MacLane spectrum; $\mathcal{A}_\bigstar^{cc}:= F(E{C_2}_+, H\mathbb{F}_2 \wedge H\mathbb{F}_2)_\bigstar$ is the Borel $C_2$-equivariant dual Steenrod algebra; and $\Lambda_\bigstar^{cc}$ is a quotient of $\mathcal{A}_\bigstar^{cc}$.  Hu and Kriz \cite{HuKriz} studied $\mathcal{A}_\bigstar^{cc}$ and completely computed the Hopf algebroid structure of $(H^c_\bigstar, \mathcal{A}_\bigstar^{cc})$.  Using their formulas, it is possible to compute the map 
$$(H\mathbb{F}_2, \mathcal{A}_*) \to (H^c_\bigstar, \mathcal{A}_\bigstar^{cc}) \to (H^c_\bigstar, \Lambda_\bigstar^{cc})$$
of Hopf-algebroids.  Filtering these Hopf algebroids compatibly produces maps of May spectral sequences: 
$$\begin{tikzcd}
\text{May spectral sequence of } \mathbb{S} \ar[r, Rightarrow] \ar[d] & \text{Ext}_{\mathcal{A}_*}(\mathbb{F}_2, \mathbb{F}_2) \ar[d] \\ 
C_2\text{-equivariant May spectral sequence of }\mathbb{S} \ar[r, Rightarrow] \ar[d] & \text{Ext}_{\mathcal{A}_\bigstar^{cc}}(H^c_\bigstar, H^c_\bigstar) \ar[d] \\ 
C_2\text{-equivariant May spectral sequence of } {BP_\mathbb{R}} \ar[r, Rightarrow] &  \text{Ext}_{\Lambda_\bigstar^{cc}}(H^c_\bigstar, H^c_\bigstar).
\end{tikzcd}$$

There is a connection between the $C_2$-equivariant May spectral sequence of $BP_\mathbb{R}$ and the homotopy fixed point spectral sequence of $BP_\mathbb{R}$: 

\begin{thm} [Li--Shi--Wang--Xu] \label{thm:AccHFPSS}
The $C_2$-equivariant May spectral sequence of $BP_\mathbb{R}$ is isomorphic to the associated-graded homotopy fixed point spectral sequence of $BP_\mathbb{R}$ as $RO(C_2)$-graded spectral sequences.  
\end{thm}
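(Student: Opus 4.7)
The plan is to exhibit both spectral sequences as arising from compatible filtrations of $F(EC_{2+}, BP_\mathbb{R})$, so that the $C_2$-equivariant May spectral sequence is literally the associated graded of the HFPSS with respect to an extra May-type filtration. Since both are $RO(C_2)$-graded multiplicative spectral sequences converging to $\pi_\bigstar BP_\mathbb{R}^{hC_2}$, the content of the theorem is a matching of $E_1$-pages and differentials.

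First I would put a May weight on the $E_2$-page of the HFPSS,
$$E_2(BP_\mathbb{R}^{hC_2})\cong \mathbb{Z}[\bar{v}_1,\bar{v}_2,\ldots]\otimes \mathbb{Z}[u_{2\sigma}^{\pm},a_\sigma]/(2a_\sigma),$$
lifting the classical May weights on the $\bar{v}_i$ and chosen so that the family of differentials $d_{2^{k+1}-1}(u_{2\sigma}^{2^{k-1}})=\bar{v}_k a_\sigma^{2^{k+1}-1}$ identified by Hu--Kriz becomes weight-preserving. Because these generate the entire HFPSS multiplicatively, the spectral sequence is filtered compatibly, and one obtains a well-defined associated-graded spectral sequence.

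Next I would identify this associated graded with the $C_2$-equivariant May SS of $BP_\mathbb{R}$. Using Hu--Kriz's presentation of $\mathcal{A}_\bigstar^{cc}$ and the Real orientation $MU_\mathbb{R}\to BP_\mathbb{R}$, the quotient Hopf algebroid $(H^c_\bigstar,\Lambda_\bigstar^{cc})$ inherits a natural May filtration whose associated graded is primitively generated on classes dual to the equivariant generators $\bar{\xi}_i$, together with $u_{2\sigma}^{\pm}$ and $a_\sigma$. A direct cobar calculation then produces the same trigraded polynomial algebra as the $E_1$-page of the equivariant May SS, identifying $\bar{v}_i$ with the class represented by $[\bar{\xi}_i]$.

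The main obstacle is matching the differentials. The HFPSS differentials $d_{2^{k+1}-1}$ descend by construction to associated-graded differentials concentrated in a single weighted summand, and one must verify that the corresponding differentials in the equivariant May SS, coming from the cobar differential on $\Lambda_\bigstar^{cc}$, take precisely this form. This is a delicate but direct calculation using the Hu--Kriz comultiplication formulas together with the interaction of $u_{2\sigma}$ and $a_\sigma$ with the $\bar{\xi}_i$. Once the initial differentials are matched, multiplicativity propagates the correspondence to all higher pages, and since both spectral sequences converge to the May-associated graded of $\pi_\bigstar BP_\mathbb{R}^{hC_2}$, one concludes an isomorphism of $RO(C_2)$-graded spectral sequences.
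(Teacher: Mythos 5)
The paper does not actually prove this theorem; it is stated as a citation from the Li--Shi--Wang--Xu paper \cite{HurewiczImages}, so there is no in-paper argument against which to compare your approach. What the paper \emph{does} supply is a definition of the key term, and it is here that your proposal goes astray. Immediately after the theorem statement, the paper explains that by the \emph{associated-graded} homotopy fixed point spectral sequence it means the spectral sequence obtained from $C_2\text{-}\HFPSS(BP_\mathbb{R})$ by replacing every $\mathbb{Z}$-class on the $E_2$-page with a tower of $\mathbb{Z}/2$-classes --- in other words, the associated graded with respect to the $2$-adic (Adams-type) filtration. This operation converts the $E_2$-page into an $\mathbb{F}_2$-algebra, which is what makes a comparison with $\Ext_{\Lambda_\bigstar^{cc}}(H^c_\bigstar, H^c_\bigstar)$ and the $C_2$-equivariant May spectral sequence possible at all, since the latter is a spectral sequence of $\mathbb{F}_2$-modules throughout.

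Your proposal instead introduces a May-weight filtration on the generators $\bar{v}_i$, $u_{2\sigma}$, $a_\sigma$, and takes the associated graded with respect to that. Such a weight filtration does not alter the coefficient ring; its associated graded would still contain classes such as $\mathbb{Z}[\bar{v}_1,\bar{v}_2,\dots]$ and $\mathbb{Z}[u_{2\sigma}^{\pm}]$ rather than $\mathbb{F}_2$-towers, and therefore could not be isomorphic to an $\mathbb{F}_2$-linear May spectral sequence. The mismatch is not cosmetic: in the equivariant May $E_1$-page the element dual to the copy of $2$ in the $0$-stem (an analogue of $a_0$ or $h_{10}$) detects multiplication by $2$, and this class simply does not exist in the associated graded you describe. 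The filtration that the cited theorem is actually about is the one generated by $(2, \bar{v}_1, \bar{v}_2, \dots)$, which collapses the Witt vector/integral information to $\mathbb{F}_2$; your ``May weights on the $\bar{v}_i$'' only captures the second half of this and omits the essential $2$-adic part. To fix your argument you would need to filter by the full invariant ideal (including $2$ in filtration one) before making the comparison with the cobar complex of $\Lambda_\bigstar^{cc}$; this is precisely what turns the integral class $\bar{v}_k a_\sigma^{2^{k+1}-1}$ in the HFPSS into the $\mathbb{F}_2$-class matching the May differential hitting $h_{k0}$-type targets. Beyond that, the claim that ``both converge to $\pi_\bigstar BP_\mathbb{R}^{hC_2}$'' is overstated: the equivariant May spectral sequence converges to the $E_2$-page of the (degenerate) equivariant Adams spectral sequence, which equals an associated graded of $\pi_\bigstar^{C_2}F(EC_{2+}, BP_\mathbb{R})$ only after the degeneration is invoked -- another reason the identification is with the $2$-adic associated graded of the HFPSS rather than the HFPSS itself.
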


By the ``associated-graded homotopy fixed point spectral sequence'', we mean that whenever we see a $\mathbb{Z}$-class on the $E_2$-page, we replace it by a tower of $\mathbb{Z}/2$-classes.  Since the equivariant Adams spectral sequence of $BP_\mathbb{R}$ degenerates, the $E_\infty$-page of the $C_2$-equivariant May spectral sequence of $BP_\mathbb{R}$ is an associated-graded of $\pi_\bigstar^{C_2} F(E{C_2}_+, BP_\mathbb{R})$.  The isomorphism in Theorem~\ref{thm:AccHFPSS} allows us to identify the classes in $C_2\text{-}\HFPSS(E_n)$ that detects the Hopf elements, the Kervaire classes, and the $\bar{\kappa}$-family.  This is crucial for tackling detection theorems of $E_n^{hC_2}$.  

Using Hu--Kriz's formulas, one can compute the maps on the $E_2$-pages of the May spectral sequences above, as well as all the differentials in the $C_2$-equivariant May spectral sequence of $BP_\mathbb{R}$.  

\begin{thm} [Li--Shi--Wang--Xu] \label{thm:E2MayImage}
On the $E_2$-page of the map 
$$\MaySS(\mathbb{S}) \to \EMaySS(\mathbb{S}) \to \EMaySS(BP_\mathbb{R}) \cong C_2\text{-}\HFPSS(BP_\mathbb{R}),$$
The classes 
\begin{eqnarray}
h_i &\mapsto& \bar{v}_ia_\sigma^{2^i -1},  \nonumber \\ 
h_j^2 &\mapsto& \bar{v}_j^2a_\sigma^{2(2^j -1)},  \nonumber \\ 
h_{2k}^4 &\mapsto& \bar{v}_{k+1}^4u_{2\sigma}^{2^{k+1}}a_\sigma^{4(2^k-1)}. \nonumber 
\end{eqnarray}
These classes all survive to the $E_\infty$-page in $C_2\text{-}\HFPSS(BP_\mathbb{R})$.  
\end{thm}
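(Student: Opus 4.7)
My plan is to deduce the image formulas on the May $E_2$-pages directly from the Hu--Kriz computation of the Borel Hopf algebroid, and then verify survival via the Hu--Kriz differentials in $C_2\text{-}\HFPSS(BP_\mathbb{R})$ (the analogues, for $BP_\mathbb{R}$ rather than $E_n$, of the differentials in Theorem~\ref{FullVersion2}).

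First, the $E_2$-page computation. Since the May spectral sequences in question are associated gradeds of the relevant cobar spectral sequences, it suffices to compute the image of a cobar representative of each class. In the classical cobar complex of $\mathcal{A}_*$, the class $h_i$ is represented by the $1$-cocycle $[\xi_i]$ (after the base change to the May associated graded it is the leading term), $h_j^2$ by $[\xi_j\, |\, \xi_j]$, and $h_{2k}^4$ by a product of four such. Under the map of Hopf algebroids $(\F_2,\mathcal{A}_*) \to (H^c_\bigstar, \mathcal{A}_\bigstar^{cc}) \to (H^c_\bigstar, \Lambda_\bigstar^{cc})$ determined by Hu--Kriz's formulas, the generator $\xi_i$ maps to a linear combination whose leading term on the May $E_2$-page is (up to the appropriate power of $a_\sigma$ accounting for the shift in filtration between the equivariant and classical setting) the class $\bar{v}_i a_\sigma^{2^i - 1}$ in the quotient algebra $\Lambda_\bigstar^{cc}$. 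Under the isomorphism $\EMaySS(BP_\mathbb{R}) \cong C_2\text{-}\HFPSS(BP_\mathbb{R})$ of Theorem~\ref{thm:AccHFPSS}, this is exactly the image formula claimed for $h_i$, and the formulas for $h_j^2$ and $h_{2k}^4$ follow by multiplicativity, after carefully collecting the $u_{2\sigma}^{2^{k+1}}$ factor that arises from combining four copies of $a_\sigma$-weight data into an integer-graded product.

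Next, survival. The classes $\bar{v}_i$ and $a_\sigma$ are permanent cycles (this is part of the Hu--Kriz computation, as recalled in the proof of Theorem~\ref{FullVersion2}), so the images of $h_i$ and $h_j^2$ are products of permanent cycles and hence permanent cycles. For the image of $h_{2k}^4$, the only potentially non-cycle factor is $u_{2\sigma}^{2^{k+1}}$. Using the Leibniz rule and the Hu--Kriz differentials
\[
d_{2^{m+1}-1}(u_{2\sigma}^{2^{m-1}}) = \bar{v}_m a_\sigma^{2^{m+1}-1},
\]
one computes
\[
d_{2^{m+1}-1}(u_{2\sigma}^{2^{k+1}}) = 2^{k+2-m}\, (u_{2\sigma}^{2^{m-1}})^{2^{k+2-m}-1}\, \bar{v}_m a_\sigma^{2^{m+1}-1},
\]
which vanishes mod $2$ for all $m \le k+1$. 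The first potentially nonzero differential on $u_{2\sigma}^{2^{k+1}}$ therefore has length $2^{k+3}-1$, with target proportional to $\bar{v}_{k+2} a_\sigma^{2^{k+3}-1}$. I will then show that multiplying this target by $\bar{v}_{k+1}^4 a_\sigma^{4(2^k-1)}$ lands in a bidegree in which the class has already been killed by a shorter differential coming from the $m = k+1$ Hu--Kriz differential (the $\bar{v}_{k+1}$ factor places the product in the image of $d_{2^{k+2}-1}$). This confirms that the image of $h_{2k}^4$ is a permanent cycle.

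The main obstacle will be the $E_2$-page bookkeeping: the map $\mathcal{A}_* \to \Lambda_\bigstar^{cc}$ is not a map of graded algebras in a naive sense, because the equivariant setting replaces internal degree by $RO(C_2)$-degree and introduces the classes $a_\sigma$ and $u_{2\sigma}$. Correctly identifying which Milnor generators $\xi_i$ in the cobar complex correspond to which $\bar{v}_i$-multiples, and tracking May filtrations so that the leading terms in the associated graded are precisely those stated, is the delicate part. Once this correspondence is pinned down from Hu--Kriz's explicit formulas, the rest of the argument is a matter of multiplicativity and Leibniz-rule calculations.
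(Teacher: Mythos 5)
This theorem is stated in the paper with the attribution ``[Li--Shi--Wang--Xu]'' and is imported from the reference \cite{HurewiczImages}; the paper itself contains no proof of it, so there is no in-text proof to compare your attempt against. Your strategic outline --- trace explicit cobar representatives through the Hu--Kriz maps of Hopf algebroids, use the identification $\EMaySS(BP_\mathbb{R}) \cong C_2\text{-}\HFPSS(BP_\mathbb{R})$ of Theorem~\ref{thm:AccHFPSS}, and settle survival by the Hu--Kriz differentials together with the Leibniz rule --- is consistent with the framework the paper lays out around the theorem, so the overall strategy is reasonable.

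That said, two points in the sketch need repair. First, in the classical cobar complex of $\mathcal{A}_*$ the class $h_i \in \Ext^{1,2^i}_{\mathcal{A}_*}(\F,\F)$ is represented by $[\xi_1^{2^i}]$ (total internal degree $2^i$), not by $[\xi_i]$ (which has degree $2^i-1$); the claim that ``the generator $\xi_i$ maps to $\bar{v}_i a_\sigma^{2^i-1}$'' conflates two different Milnor generators, and the degree bookkeeping you flag as the ``delicate part'' genuinely fails as written because the starting representative is in the wrong degree. Second, the survival argument for the image of $h_{2k}^4$ hinges on the assertion that the would-be target $\bar{v}_{k+1}^4\bar{v}_{k+2}a_\sigma^{4(2^k-1)+2^{k+3}-1}$ is already annihilated on the $E_{2^{k+2}-1}$-page by a differential off the class $\bar{v}_{k+1}^3\bar{v}_{k+2}a_\sigma^{2^{k+3}-4}u_{2\sigma}^{2^k}$. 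For that to help you must first check that this candidate source is itself still present on that page --- that it has not been killed by a shorter differential nor supports an earlier one --- which is not automatic from the information you use; this requires inspecting the full pattern of differentials in $C_2\text{-}\HFPSS(BP_\mathbb{R})$ rather than only the Leibniz consequences of $d_{2^{m+1}-1}(u_{2\sigma}^{2^{m-1}})=\bar{v}_m a_\sigma^{2^{m+1}-1}$. Both issues are plausibly fixable, but as stated they are gaps rather than details.
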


Since $E_n$ is Real oriented and everything is 2-local, a Real orientation gives us a $C_2$-equivariant homotopy commutative map 
$$BP_\mathbb{R} \to E_n,$$ 
which induces a multiplicative map 
$$C_2\text{-}\HFPSS(BP_\mathbb{R}) \to C_2\text{-}\HFPSS(E_n)$$
of spectral sequences.  On the $E_2$-page, this map sends the classes $u_{2\sigma} \mapsto u_{2\sigma}$, $\as \mapsto \as$, and 
\begin{eqnarray} \label{eqn:BPREn}
\bar{v}_i \mapsto \left \{ \begin{array}{ll} \bar{u}_i \bar{u}^{2^i -1} & 1 \leq i \leq n-1 \\ 
\bar{u}^{2^n-1} & i = n \\ 
0 & i > n.
\end{array} \right.
\end{eqnarray}

\begin{thm} [Detection Theorem for $E_n^{hC_2}$] \hfill
\label{EnC2Detection}
\begin{enumerate}
\item For $1 \leq i, j \leq n$, if the element $h_{i} \in \Ext_{\mathcal{A}_*}^{1, 2^i}(\F, \F)$ or $h_{j}^2 \in \Ext_{\mathcal{A}_*}^{2, 2^{j+1}}(\F, \F)$ survives to the $E_\infty$-page of the Adams spectral sequence, then its image under the Hurewicz map $\pi_*\mathbb{S} \to \pi_* E_n^{hC_2}$ is nonzero.  
\item For $1 \leq k \leq n-1$, if the element $g_k \in \Ext_{\mathcal{A}_*}^{4, 2^{k+2}+2^{k+3}}(\F, \F)$ survives to the $E_\infty$-page of the Adams spectral sequence, then its image under the Hurewicz map $\pi_*\mathbb{S} \to \pi_*E_n^{hC_2}$ is nonzero. 
\end{enumerate}
\end{thm}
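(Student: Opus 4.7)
The plan is to transport the Detection Theorem for $BP_\mathbb{R}^{hC_2}$ (Theorem~\ref{thm:MURBPRDetection}) to $E_n^{hC_2}$ via the Real orientation $BP_\mathbb{R} \to E_n$ from Theorem~\ref{FullVersion}. After $2$-localizing, this orientation induces a multiplicative map of $C_2$-equivariant homotopy fixed point spectral sequences
\[
\Phi \colon C_2\text{-}\HFPSS(BP_\mathbb{R}) \longrightarrow C_2\text{-}\HFPSS(E_n)
\]
whose action on $E_2$-pages is formula~(\ref{eqn:BPREn}). By Theorem~\ref{thm:E2MayImage}, the Hurewicz images of $h_i$, $h_j^2$, and the $g_k$-family (detected via $h_{2k}^4$) in $C_2\text{-}\HFPSS(BP_\mathbb{R})$ are the explicit permanent cycles $\bar{v}_i \as^{2^i-1}$, $\bar{v}_j^2 \as^{2^{j+1}-2}$, and $\bar{v}_{k+1}^4 u_{2\sigma}^{2^{k+1}} \as^{2^{k+2}-4}$, respectively. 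Applying $\Phi$ via~(\ref{eqn:BPREn}), the candidate detectors in $C_2\text{-}\HFPSS(E_n)$ are
\[
\bar{u}_i \bar{u}^{2^i-1}\as^{2^i-1}, \quad \bar{u}_j^2 \bar{u}^{2^{j+1}-2}\as^{2^{j+1}-2}, \quad \bar{u}_{k+1}^4 \bar{u}^{4(2^{k+1}-1)} u_{2\sigma}^{2^{k+1}} \as^{2^{k+2}-4},
\]
for $i,j,k+1 \leq n-1$, with the $i=n$, $j=n$, and $k+1=n$ variants obtained by substituting $\bar{v}_n \mapsto \bar{u}^{2^n-1}$. Since $\Phi$ is a map of spectral sequences, each such image is automatically a permanent cycle, so it remains only to verify nonvanishing on the $E_\infty$-page.

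By Theorem~\ref{FullVersion2}, every differential in $C_2\text{-}\HFPSS(E_n)$ is a Leibniz-rule consequence of
\[
d_{2^{\ell+1}-1}(u_{2\sigma}^{2^{\ell-1}}) = \bar{u}_\ell \bar{u}^{2^\ell-1}\as^{2^{\ell+1}-1} \text{ for } 1 \leq \ell \leq n-1, \qquad d_{2^{n+1}-1}(u_{2\sigma}^{2^{n-1}}) = \bar{u}^{2^n-1}\as^{2^{n+1}-1}.
\]
For the Hopf-element image $\bar{u}_i \bar{u}^{2^i-1}\as^{2^i-1}$: were it the target of some $d_{2^{\ell+1}-1}(u_{2\sigma}^M \cdot R)$, then the $\bar{u}_i$-factor of the target forces $\ell = i$ (since a permanent-cycle multiplier $R$ cannot invert any $\bar{u}$-generator); matching $\as$-powers then requires $R$ to carry $\as$-exponent $(2^i-1) - (2^{i+1}-1) < 0$, a contradiction. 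The identical index-matching and $\as$-power argument handles $h_j^2$ as well as the $i=n$ and $j=n$ boundary cases, settling part~(1).

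The main obstacle is part~(2) for the $\bar{\kappa}$-family, since the $g_k$-image contains the factor $u_{2\sigma}^{2^{k+1}}$, which itself supports a nontrivial $d_{2^{k+3}-1}$ when $k \leq n-2$. The case $k = n-1$ is clean: $u_{2\sigma}^{2^n} = (u_{2\sigma}^{2^{n-1}})^2$ is a permanent cycle by the Leibniz rule combined with $2\as = 0$. For $k \leq n-2$ one computes by Leibniz that the putative $d_{2^{k+3}-1}$ on the image equals
\[
\bar{u}_{k+1}^4 \bar{u}_{k+2} \bar{u}^{3\cdot 2^{k+2}-5}\as^{3\cdot 2^{k+2}-5}
\]
(with the $\bar{u}_{k+2}$ factor absent in the $k+2=n$ modification), and a direct substitution into the generating $d_{2^{k+2}-1}$-differential shows this target is already the boundary of $u_{2\sigma}^{2^k}\cdot \bar{u}_{k+1}^3 \bar{u}_{k+2}\bar{u}^{5\cdot 2^{k+1}-4}\as^{2^{k+3}-4}$; hence the putative differential vanishes on the $E_{2^{k+3}-1}$-page, confirming permanence. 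The same index-matching and $\as$-power argument as in part~(1), applied now to the unique $\bar{u}_{k+1}^4$-factor of the image, shows it is not in the image of any differential, completing the proof.
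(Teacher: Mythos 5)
Your proposal is correct and follows the same strategy as the paper's own proof: transport the $BP_{\mathbb R}$ detection classes of Theorem~\ref{thm:E2MayImage} along the map $\Phi$ of homotopy fixed point spectral sequences induced by the Real orientation, and then use the complete differential pattern of Theorem~\ref{FullVersion2} to conclude survival to $E_\infty$. The paper leaves the survival check as immediate from the computed differentials, while you spell out the index and $a_\sigma$-power bookkeeping (including the consistency check on $d_{2^{k+3}-1}$, which is in fact automatic since $\Phi$ is a map of spectral sequences and the source classes are known permanent cycles in $C_2$-$\HFPSS(BP_\mathbb{R})$); this extra detail is correct and matches the paper's intent.
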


\begin{proof}
By Theorem~\ref{thm:E2MayImage} and (\ref{eqn:BPREn}), the composite map
$$\MaySS(\mathbb{S}) \to \EMaySS(BP_\mathbb{R}) \cong C_2\text{-}\HFPSS(BP_\mathbb{R}) \to C_2\text{-}\HFPSS(E_n)$$
on the $E_2$-pages sends the classes 
\begin{eqnarray}
h_i \mapsto \bar{v}_ia_\sigma^{2^i -1} 
&\mapsto& \left\{\begin{array}{ll}
\bar{u}_i \bar{u}^{2^i -1}a_\sigma^{2^i -1} & 1 \leq i \leq n-1 \\ 
\bar{u}^{2^n-1}a_\sigma^{2^n -1} & i = n \\ 
0 & i > n,
\end{array} \right.  \nonumber \\ 
h_j^2 \mapsto \bar{v}_j^2a_\sigma^{2(2^j -1)} 
&\mapsto& \left\{\begin{array}{ll}
\bar{u}_j^2 \bar{u}^{2(2^j -1)}a_\sigma^{2(2^j -1)} & 1 \leq j \leq n-1 \\ 
\bar{u}^{2(2^n-1)}a_\sigma^{2(2^n -1)} & j = n \\ 
0 & j > n,
\end{array} \right.  \nonumber \\ 
h_{2k}^4 \mapsto \bar{v}_{k+1}^4u_{2\sigma}^{2^{k+1}}a_\sigma^{4(2^k-1)}
&\mapsto& \left\{\begin{array}{ll}
\bar{u}_{k+1}^4\bar{u}^{4(2^{k+1}-1)}u_{2\sigma}^{2^{k+1}} a_\sigma^{4(2^k-1)} & 1 \leq k \leq n-2 \\ 
\bar{u}^{4(2^n-1)}u_{2\sigma}^{2^n}a_\sigma^{4(2^{n-1}-1)} & k = n-1 \\ 
0 & k > n-1.
\end{array} \right.  \nonumber 
\end{eqnarray}
We know all the differentials in $C_2\text{-}\HFPSS(E_n)$ from Section~\ref{sec:HFPSSEn}.  From these differentials, it is clear that all the nonzero images on the $E_2$-page survive to the $E_\infty$-page to represent elements in $\pi_* E_n^{hC_2}$.  The statement of the theorem follows. 
\end{proof}

\begin{cor}[Detection Theorem for $E_n^{hG}$]\label{EnhGDetection}
Let $G$ be a finite subgroup of the Morava stabilizer group $\mathbb{G}_n$ containing the centralizer subgroup $C_2$.   
\begin{enumerate}
\item For $1 \leq i, j \leq n$, if the element $h_{i} \in \Ext_{\mathcal{A}_*}^{1, 2^i}(\F, \F)$ or $h_{j}^2 \in \Ext_{\mathcal{A}_*}^{2, 2^{j+1}}(\F, \F)$ survives to the $E_\infty$-page of the Adams spectral sequence, then its image under the Hurewicz map $\pi_*\mathbb{S} \to \pi_* E_n^{hG}$ is nonzero.  
\item For $1 \leq k \leq n-1$, if the element $g_k \in \Ext_{\mathcal{A}_*}^{4, 2^{k+2}+2^{k+3}}(\F, \F)$ survives to the $E_\infty$-page of the Adams spectral sequence, then its image under the Hurewicz map $\pi_*\mathbb{S} \to \pi_*E_n^{hG}$ is nonzero. 
\end{enumerate}
\end{cor}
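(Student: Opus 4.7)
The proof will follow immediately from Theorem~\ref{EnC2Detection} together with the functoriality of homotopy fixed points with respect to subgroup inclusions. The plan is as follows.

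Since $C_2 \subseteq G$ is a subgroup inclusion, there is a natural restriction map of spectra
$$
E_n^{hG} \longrightarrow E_n^{hC_2},
$$
and this map fits into a commutative triangle
$$
\begin{tikzcd}
 & \pi_*\mathbb{S} \arrow{ld} \arrow{rd} & \\
\pi_* E_n^{hG} \arrow{rr} & & \pi_* E_n^{hC_2}
\end{tikzcd}
$$
of Hurewicz homomorphisms, because the $G$-action on $E_n$ restricts to the central $C_2$-action and the unit map $\mathbb{S} \to E_n$ is $G$-equivariant (with $\mathbb{S}$ carrying the trivial action).

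Given any class $x \in \pi_*\mathbb{S}$ whose image in $\pi_* E_n^{hC_2}$ is nonzero (by Theorem~\ref{EnC2Detection}, this applies to any surviving $h_i$, $h_j^2$, or $g_k$ in the stated ranges), the commutativity of the triangle forces the image of $x$ in $\pi_* E_n^{hG}$ to also be nonzero, since otherwise its further image in $\pi_* E_n^{hC_2}$ would vanish. Applying this observation to the elements listed in parts (1) and (2) of Theorem~\ref{EnC2Detection} yields parts (1) and (2) of Corollary~\ref{EnhGDetection}, respectively.

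There is no real obstacle here: the corollary is a direct consequence of Theorem~\ref{EnC2Detection} and the naturality of the Hurewicz map in the subgroup $C_2 \subseteq G$. The only thing worth remarking is that the existence of the map $E_n^{hG} \to E_n^{hC_2}$ uses that the $C_2$-action we have constructed on $E_n$ (via Theorem~\ref{FullVersion}) agrees with the restriction of the $G$-action coming from the Goerss--Hopkins--Miller theorem; but this compatibility is built into the construction and was already exploited to deduce Theorem~\ref{FullVersion} itself.
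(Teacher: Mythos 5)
Your argument is the same as the paper's. The paper factors the unit map $\mathbb{S} \to E_n^{hC_2}$ through $E_n^{hG} = F(EG_+,E_n)^G \to F(EG_+,E_n)^{C_2} = E_n^{hC_2}$, and concludes from Theorem~\ref{EnC2Detection} exactly as you do; your remark about the compatibility of actions is a harmless elaboration that the paper leaves implicit.
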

\begin{proof}
Consider the following factorization of the unit map $\mathbb{S} \to E_n^{hC_2}$: 
$$\begin{tikzcd}
E_n^{hG} = F(EG_+, E_n)^G \ar[r] & F(EG_+, E_n)^{C_2} = E_n^{hC_2} \\ 
\mathbb{S} \ar[u] \ar[ru] &
\end{tikzcd}$$
The claims now follow easily from Theorem~\ref{EnC2Detection}.
\end{proof}

\bibliographystyle{alpha}
\bibliography{Bibliography}

\newcommand{\etalchar}[1]{$^{#1}$}
\begin{thebibliography}{GHMR05}

\bibitem[AA66]{AdamsAtiyahHopfInvariant}
J.~F. Adams and M.~F. Atiyah.
\newblock {$K$}-theory and the {H}opf invariant.
\newblock {\em Quart. J. Math. Oxford Ser. (2)}, 17:31--38, 1966.

\bibitem[ABG{\etalchar{+}}14]{ThomInfinity}
Matthew Ando, Andrew~J. Blumberg, David Gepner, Michael~J. Hopkins, and Charles
  Rezk.
\newblock An {$\infty$}-categorical approach to {$R$}-line bundles,
  {$R$}-module {T}hom spectra, and twisted {$R$}-homology.
\newblock {\em J. Topol.}, 7(3):869--893, 2014.

\bibitem[ACB19]{ThomRing}
Omar Antol\'{\i}n-Camarena and Tobias Barthel.
\newblock A simple universal property of {T}hom ring spectra.
\newblock {\em J. Topol.}, 12(1):56--78, 2019.

\bibitem[Ada60]{AdamsHopfInvariant}
J.~F. Adams.
\newblock On the non-existence of elements of {H}opf invariant one.
\newblock {\em Ann. of Math. (2)}, 72:20--104, 1960.

\bibitem[Ada62]{AdamsVectorFields}
J.~F. Adams.
\newblock Vector fields on spheres.
\newblock {\em Ann. of Math. (2)}, 75:603--632, 1962.

\bibitem[Ang08]{AngeltveitAinfty}
Vigleik Angeltveit.
\newblock Topological {H}ochschild homology and cohomology of {$A_\infty$} ring
  spectra.
\newblock {\em Geom. Topol.}, 12(2):987--1032, 2008.

\bibitem[Ara79]{Araki}
Sh\^or\^o Araki.
\newblock Orientations in {$\tau $}-cohomology theories.
\newblock {\em Japan. J. Math. (N.S.)}, 5(2):403--430, 1979.

\bibitem[Ati66]{AtiyahKR}
M.~F. Atiyah.
\newblock {$K$}-theory and reality.
\newblock {\em Quart. J. Math. Oxford Ser. (2)}, 17:367--386, 1966.

\bibitem[Bau08]{BauerTMF}
Tilman Bauer.
\newblock Computation of the homotopy of the spectrum {\tt tmf}.
\newblock In {\em Groups, homotopy and configuration spaces}, volume~13 of {\em
  Geom. Topol. Monogr.}, pages 11--40. Geom. Topol. Publ., Coventry, 2008.

\bibitem[BG18]{BobkovaGoerss}
I.~Bobkova and P.~G. Goerss.
\newblock Topological resolutions in {$K(2)$}-local homotopy theory at the
  prime {$2$}.
\newblock {\em Journal of Topology}, 11(4):917--956, 2018.

\bibitem[BGH18]{BeaudryGoerssHenn}
Agn\'es Beaudry, Paul~G. Goerss, and Hans-Werner Henn.
\newblock Chromatic splitting for the $k(2)$-local sphere at $p=2$.
\newblock {\em Arxiv 1712.08182}, 2018.

\bibitem[BJM84]{BarrattJonesMahowaldTheta5}
M.~G. Barratt, J.~D.~S. Jones, and M.~E. Mahowald.
\newblock Relations amongst {T}oda brackets and the {K}ervaire invariant in
  dimension {$62$}.
\newblock {\em J. London Math. Soc. (2)}, 30(3):533--550, 1984.

\bibitem[BMT70]{BarratMahowaldTangoraTheta4}
M.~G. Barratt, M.~E. Mahowald, and M.~C. Tangora.
\newblock Some differentials in the {A}dams spectral sequence. {II}.
\newblock {\em Topology}, 9:309--316, 1970.

\bibitem[BO16]{BehrensOrmsby}
Mark Behrens and Kyle Ormsby.
\newblock On the homotopy of {$Q(3)$} and {$Q(5)$} at the prime 2.
\newblock {\em Algebr. Geom. Topol.}, 16(5):2459--2534, 2016.

\bibitem[Bro69]{Browder}
William Browder.
\newblock The {K}ervaire invariant of framed manifolds and its generalization.
\newblock {\em Ann. of Math. (2)}, 90:157--186, 1969.

\bibitem[BSS20]{ThomQuotient}
Samik Basu, Steffen Sagave, and Christian Schlichtkrull.
\newblock {G}eneralized {T}hom spectra and their topological {H}ochschild
  homology.
\newblock {\em J. Inst. Math. Jussieu}, 19(1):21--64, 2020.

\bibitem[BW18]{BehrensWilson}
Mark Behrens and Dylan Wilson.
\newblock A {$C_2$}-equivariant analog of {M}ahowald's {T}hom spectrum theorem.
\newblock {\em Proc. Amer. Math. Soc.}, 146(11):5003--5012, 2018.

\bibitem[CM15]{ChadwickMandell}
Steven~Greg Chadwick and Michael~A. Mandell.
\newblock {$E_n$} genera.
\newblock {\em Geom. Topol.}, 19(6):3193--3232, 2015.

\bibitem[DH04]{DevinatzHopkins}
Ethan~S. Devinatz and Michael~J. Hopkins.
\newblock Homotopy fixed point spectra for closed subgroups of the {M}orava
  stabilizer groups.
\newblock {\em Topology}, 43(1):1--47, 2004.

\bibitem[DMPR17]{DottoRealTHH}
Emanuele Dotto, Kristian Moi, Irakli Patchkoria, and Sune~Precht Reeh.
\newblock Real topological hochschild homology.
\newblock {\em ar{X}iv preprint ar{X}iv:1711.10226}, 2017.

\bibitem[Dug05]{DuggerKR}
Daniel Dugger.
\newblock An {A}tiyah--{H}irzebruch spectral sequence for {KR}-theory.
\newblock {\em K-theory}, 35(3):213--256, 2005.

\bibitem[Fuj76]{FujiiMUR}
Michikazu Fujii.
\newblock Cobordism theory with reality.
\newblock {\em Math. J. Okayama Univ.}, 18(2):171--188, 1975/76.

\bibitem[GH04]{GoerssHopkins}
P.~G. Goerss and M.~J. Hopkins.
\newblock Moduli spaces of commutative ring spectra.
\newblock In {\em Structured ring spectra}, volume 315 of {\em London Math.
  Soc. Lecture Note Ser.}, pages 151--200. Cambridge Univ. Press, Cambridge,
  2004.

\bibitem[GHMR05]{GoerssHennMahowaldRezk}
P.~Goerss, H.-W. Henn, M.~Mahowald, and C.~Rezk.
\newblock A resolution of the {$K(2)$}-local sphere at the prime 3.
\newblock {\em Ann. of Math. (2)}, 162(2):777--822, 2005.

\bibitem[Gre85]{GreenleesThesis}
J.~P.~C. Greenlees.
\newblock Adams spectral sequences in equivariant topology.
\newblock {\em Ph.D. Thesis, University of Cambridge}, 1985.

\bibitem[Gre88]{Greenlees1988}
J.~P.~C. Greenlees.
\newblock Stable maps into free {$G$}-spaces.
\newblock {\em Trans. Amer. Math. Soc.}, 310(1):199--215, 1988.

\bibitem[Gre90]{Greenlees1990}
J.~P.~C. Greenlees.
\newblock The power of mod {$p$} {B}orel homology.
\newblock In {\em Homotopy theory and related topics ({K}inosaki, 1988)},
  volume 1418 of {\em Lecture Notes in Math.}, pages 140--151. Springer,
  Berlin, 1990.

\bibitem[Hen07]{Henn2007}
Hans-Werner Henn.
\newblock On finite resolutions of {$K(n)$}-local spheres.
\newblock In {\em Elliptic cohomology}, volume 342 of {\em London Math. Soc.
  Lecture Note Ser.}, pages 122--169. Cambridge Univ. Press, Cambridge, 2007.

\bibitem[HHR10]{HHRCDM1}
Michael~A Hill, Michael~J Hopkins, and Douglas~C Ravenel.
\newblock The arf-kervaire invariant problem in algebraic topology:
  introduction.
\newblock {\em Current developments in mathematics}, 2009:23--57, 2010.

\bibitem[HHR11]{HHRCDM2}
Michael~A Hill, Michael~J Hopkins, and Douglas~C Ravenel.
\newblock The arf-kervaire problem in algebraic topology: Sketch of the proof.
\newblock {\em Current developments in mathematics}, 2010:1--44, 2011.

\bibitem[HHR16]{HHR}
M.~A. Hill, M.~J. Hopkins, and D.~C. Ravenel.
\newblock On the nonexistence of elements of {K}ervaire invariant one.
\newblock {\em Ann. of Math. (2)}, 184(1):1--262, 2016.

\bibitem[HHR17]{HHRKH}
Michael~A. Hill, Michael~J. Hopkins, and Douglas~C. Ravenel.
\newblock The slice spectral sequence for the {$C_4$} analog of real
  {$K$}-theory.
\newblock {\em Forum Math.}, 29(2):383--447, 2017.

\bibitem[Hil17]{HillEquivDisc}
Michael~A. Hill.
\newblock On the algebras over equivariant little disks.
\newblock {\em ar{X}iv preprint ar{X}iv:1709.02005}, 2017.

\bibitem[HK01]{HuKriz}
Po~Hu and Igor Kriz.
\newblock Real-oriented homotopy theory and an analogue of the
  {A}dams-{N}ovikov spectral sequence.
\newblock {\em Topology}, 40(2):317 -- 399, 2001.

\bibitem[HM98]{HopkinsMahowald}
Michael~J. Hopkins and Mark Mahowald.
\newblock From elliptic curves to homotopy theory.
\newblock {\em \newline 1998 Preprint, {A}vailable at
  http://hopf.math.purdue.edu/Hopkins-Mahowald/eo2homotopy.pdf}, 1998.

\bibitem[HM17]{HillMeier}
Michael~A. Hill and Lennart Meier.
\newblock The {$C_2$}-spectrum {${\rm Tmf}_1(3)$} and its invertible modules.
\newblock {\em Algebr. Geom. Topol.}, 17(4):1953--2011, 2017.

\bibitem[Hop99]{COCTALOS}
Michael Hopkins.
\newblock Complex oriented cohomology theories and the language of stacks.
\newblock {\em \newline {A}vailable at
  http://web.math.rochester.edu/people/faculty/doug/otherpapers/coctalos.pdf},
  1999.

\bibitem[HSWX18]{HillShiWangXu}
Michael~A. Hill, XiaoLin~Danny Shi, Guozhen Wang, and Zhouli Xu.
\newblock The slice spectral sequence of a ${C}_4$-equivariant height-4
  {L}ubin--{T}ate theory.
\newblock {\em ArXiv 1811.07960}, 2018.

\bibitem[KLW18]{PhantomRing}
Nitu Kitchloo, Vitaly Lorman, and W.~Stephen Wilson.
\newblock Multiplicative structure on real {J}ohnson-{W}ilson theory.
\newblock In {\em New directions in homotopy theory}, volume 707 of {\em
  Contemp. Math.}, pages 31--44. Amer. Math. Soc., Providence, RI, 2018.

\bibitem[Lan68]{LandweberMUR}
Peter~S. Landweber.
\newblock Conjugations on complex manifolds and equivariant homotopy of {$MU$}.
\newblock {\em Bull. Amer. Math. Soc.}, 74:271--274, 1968.

\bibitem[Lin08]{LinExt}
Wen-Hsiung Lin.
\newblock $\text{Ext}_{A}^{4,\text{*}}(\mathbb{Z}/2, \mathbb{Z}/2)$ and
  $\text{Ext}_{A}^{5, \text{*}}(\mathbb{Z}/2, \mathbb{Z}/2)$.
\newblock {\em Topology and its Applications}, 155(5):459--496, 2008.

\bibitem[LSWX19]{HurewiczImages}
Guchuan Li, XiaoLin~Danny Shi, Guozhen Wang, and Zhouli Xu.
\newblock Hurewicz images of real bordism theory and real {J}ohnson-{W}ilson
  theories.
\newblock {\em Adv. Math.}, 342:67--115, 2019.

\bibitem[LT66]{LubinTate}
Jonathan Lubin and John Tate.
\newblock Formal moduli for one-parameter formal {L}ie groups.
\newblock {\em Bull. Soc. Math. France}, 94:49--59, 1966.

\bibitem[Lur10]{LurieChromatic}
Jacob Lurie.
\newblock Chromatic homotopy theory.
\newblock {\em \newline {A}vailable at
  http://www.math.harvard.edu/~lurie/252x.html}, 2010.

\bibitem[Lur18]{HA}
Jacob Lurie.
\newblock Higher {A}lgebra.
\newblock {\em \newline {A}vailable at http://www.math.harvard.edu/~lurie/},
  2018.

\bibitem[Mei18]{MeierTMF}
Lennart Meier.
\newblock Topological modular forms with level structure: decompositions and
  duality.
\newblock {\em ar{X}iv preprint ar{X}iv:1806.06709}, 2018.

\bibitem[Mil60]{Milnor60}
J.~Milnor.
\newblock On the cobordism ring {$\Omega \sp{\ast} $} and a complex analogue.
  {I}.
\newblock {\em Amer. J. Math.}, 82:505--521, 1960.

\bibitem[{Mil}11]{HaynesKervaire}
H.~{Miller}.
\newblock {Kervaire Invariant One [after M. A. Hill, M. J. Hopkins, and D. C.
  Ravenel]}.
\newblock {\em ArXiv e-prints}, April 2011.

\bibitem[MR09]{MahowaldRezk}
Mark Mahowald and Charles Rezk.
\newblock Topological modular forms of level 3.
\newblock {\em Pure Appl. Math. Q.}, 5(2, Special Issue: In honor of Friedrich
  Hirzebruch. Part 1):853--872, 2009.

\bibitem[MT67]{MahowaldTangoraTheta4}
Mark Mahowald and Martin Tangora.
\newblock Some differentials in the {A}dams spectral sequence.
\newblock {\em Topology}, 6:349--369, 1967.

\bibitem[Nov60]{Novikov60}
S.~P. Novikov.
\newblock Some problems in the topology of manifolds connected with the theory
  of {T}hom spaces.
\newblock {\em Soviet Math. Dokl.}, 1:717--720, 1960.

\bibitem[Nov62]{Novikov62}
S.~P. Novikov.
\newblock Homotopy properties of {T}hom complexes.
\newblock {\em Mat. Sb. (N.S.)}, 57 (99):407--442, 1962.

\bibitem[Nov67]{Novikov67}
S.~P. Novikov.
\newblock Methods of algebraic topology from the point of view of cobordism
  theory.
\newblock {\em Izv. Akad. Nauk SSSR Ser. Mat.}, 31:855--951, 1967.

\bibitem[Pet17]{PetersonChromatic}
Eric Peterson.
\newblock Formal geometry and bordism operations.
\newblock {\em \newline {A}vailable at http://www.math.harvard.edu/~ecp/},
  2017.

\bibitem[Qui69]{Quillen69}
Daniel Quillen.
\newblock On the formal group laws of unoriented and complex cobordism theory.
\newblock {\em Bull. Amer. Math. Soc.}, 75:1293--1298, 1969.

\bibitem[Rav78]{RavenelOdd}
Douglas~C. Ravenel.
\newblock The non-existence of odd primary {A}rf invariant elements in stable
  homotopy.
\newblock {\em Math. Proc. Cambridge Philos. Soc.}, 83(3):429--443, 1978.

\bibitem[Rav92]{RavenelOrangeBook}
Douglas~C. Ravenel.
\newblock {\em Nilpotence and periodicity in stable homotopy theory}, volume
  128 of {\em Annals of Mathematics Studies}.
\newblock Princeton University Press, Princeton, NJ, 1992.
\newblock Appendix C by Jeff Smith.

\bibitem[Rav03]{RavenelGreen}
Douglas~C Ravenel.
\newblock {\em Complex cobordism and stable homotopy groups of spheres}.
\newblock American Mathematical Soc., 2003.

\bibitem[Rez98]{HopkinsMiller}
Charles Rezk.
\newblock Notes on the {H}opkins-{M}iller theorem.
\newblock In {\em Homotopy theory via algebraic geometry and group
  representations ({E}vanston, {IL}, 1997)}, volume 220 of {\em Contemp.
  Math.}, pages 313--366. Amer. Math. Soc., Providence, RI, 1998.

\bibitem[Rob89]{RobinsonAinfty}
Alan Robinson.
\newblock Obstruction theory and the strict associativity of {M}orava
  {$K$}-theories.
\newblock In {\em Advances in homotopy theory ({C}ortona, 1988)}, volume 139 of
  {\em London Math. Soc. Lecture Note Ser.}, pages 143--152. Cambridge Univ.
  Press, Cambridge, 1989.

\end{thebibliography}
\end{document}